\numberwithin{equation}{section}
\newtheorem{theorem}{Theorem}[section]
\newtheorem{corollary}[theorem]{Corollary}
\newtheorem{definition}[theorem]{Definition}
\newtheorem{lemma}[theorem]{Lemma}
\newtheorem{remark}[theorem]{Remark}
\newenvironment{dedication}
        {\vspace{6ex}\begin{quotation}\begin{center}\begin{em}}
        {\par\end{em}\end{center}\end{quotation}}
\begin{document}

\title[\emph{}]{Trace theorem for quasi-Fuchsian groups}

\author[]{A. Connes}
\address{College de France, 3 rue d'Ulm, Paris F-75005 France}
\email{alain@connes.org}

\author[]{F. Sukochev}
\address{School of Mathematics and Statistics, University of NSW, Sydney,  2052, Australia}
\email{f.sukochev@unsw.edu.au}


\author[]{D. Zanin}
\address{School of Mathematics and Statistics, University of NSW, Sydney,  2052, Australia}
\email{d.zanin@unsw.edu.au}

\maketitle

\begin{dedication} Dedicated to Dennis Sullivan.
\end{dedication}
\begin{abstract} We complete the proof of the  Trace Theorem in the quantized calculus for quasi-Fuchsian group which was stated and sketched, but not fully proved, on pp. 322-325 in the book \lq\lq Noncommutative Geometry\rq\rq of the first author.
\end{abstract}

\section{Introduction}

We first recall how quasi-Fuchsian groups are obtained by Bers  (\cite{BE}) from a pair of cocompact Fuchsian groups $\Gamma_1, \Gamma_2$  and a given group isomorphism $\alpha:\Gamma_1\to \Gamma_2$. All required notations and notions used below are explained in Section \ref{prel section}. The quasi-Fuchsian group $G=G(\Gamma_1, \Gamma_2,\alpha)$ is a discrete subgroup $G\subset {\rm PSL}(2,\mathbb{C})$ which simultaneously uniformizes the compact Riemann surfaces $X_j=\mathbb{D}/\Gamma_j,$ $j=1,2,$ (where $\mathbb{D}$ is the unit disk in $\mathbb{C}$)  in the following sense (\cite{BO}):
\begin{enumerate}
\item There is a Jordan curve  $C\subset\bar{\mathbb{C}}=S^2$ invariant under any $g\in G$ and such that the action of $G$ on $C$ is minimal (every orbit is dense).
\item Let $\Sigma_{{\rm int}}$ and $\Sigma_{{\rm ext}}$ be the connected components of the complement of $C$. There are conformal diffeomorphisms $Z:\mathbb{D}\to\Sigma_{{\rm int}}$, $Z':\mathbb{D}\to\Sigma_{{\rm ext}}$ and group isomorphisms $\pi:G\to \Gamma_1$, $\pi':G\to \Gamma_2$ such that 
$$g\circ Z=Z\circ\pi(g),\quad g\circ Z'=Z'\circ\pi'(g),\quad \pi'(g)=\alpha(\pi(g))\quad  \forall g\in G.$$	
\end{enumerate}

 Furthermore, the group $G=G(\Gamma_1,\Gamma_2,\alpha)$ satisfies the following properties:
\begin{enumerate}[{\rm (i)}]
\item $G$ is finitely generated.
\item $G$ does not contain elliptic or parabolic elements.
\end{enumerate}

The Jordan curve $C=\Lambda(G)$ is a quasi-circle whose Hausdorff dimension $p$ is strictly bigger than one except when the $\Gamma_1$ and $\Gamma_2$ are conjugate Fuchsian groups (\cite{BO}, Theorem 2).

The main result of this paper is the following theorem appearing as Theorem 17 on p. 324 in \cite{Connes}. It gives a formula for the $p-$dimensional geometric\footnote{A measure $\nu$ on $\bar{\mathbb{C}}$ is called $p-$dimensional geometric (relative to $G$) if $d(\nu\circ g)(z)=|g'|^p(z)d\nu(z)$ for every $g\in G.$ Here, $g'$ is the complex derivative.} probability measure on $C=\Lambda(G)$ in terms of the quantized differential $[F,Z]$ of the Riemann mapping $Z:\mathbb{D}\to\Sigma_{{\rm int}}$  understood as a function on the circle $\mathbb{S}^1=\partial\mathbb{D}$ (to which it extends by  continuity using the Caratheodory theorem (\cite{markushevich})). Here $F$ is the Hilbert transform on the circle; equivalently, $F=2P-1,$ where $P$ is the Riesz projection and the algebra $L_{\infty}(\partial\mathbb{D})$ is identified with its natural action on the Hilbert space $L_2(\partial\mathbb{D})$ by pointwise multiplication. The basic formula depends on the fact that, unlike for distributional derivatives, one can take the $p$-th power $|[F,Z]|^p$ of the absolute value of the quantized differential $[F,Z]$.  The nice geometric properties of the quasi-Fuchsian groups $G=G(\Gamma_1,\Gamma_2,\alpha)$ are used crucially in the proof and we formulate our result in a slightly greater generality and  in more intrinsic terms without reference to the joint uniformization.

\begin{theorem}\label{main theorem}  Let $G$ be a finitely generated quasi-Fuchsian group without  parabolic elements. Let $p>1$ be the Hausdorff dimension of $C=\Lambda(G)$,  and let $\nu$ be the (unique) $p-$dimensional geometric probability measure on $\Lambda(G).$ Then
\begin{enumerate}[{\rm (a)}]
\item\label{mainta} $[F,Z]\in\mathcal{L}_{p,\infty}.$
\item\label{maintb} for every $f\in C(\Lambda(G))$ and for every bounded trace\footnote{in particular for every Dixmier trace} $\varphi$ on $\mathcal{L}_{1,\infty},$ there exists a constant $c(G,\varphi)<\infty$ such that
\begin{equation}\label{trace formula}
\varphi((f\circ Z)\cdot|[F,Z]|^p)=c(G,\varphi)\cdot\int_{\Lambda(G)}f(t)d\nu(t).
\end{equation}
\item\label{maintc} for any  Dixmier trace ${\rm Tr}_{\omega}$, with $\omega$ power invariant, one has $c(G,{\rm Tr}_{\omega})>0.$
\end{enumerate}
\end{theorem}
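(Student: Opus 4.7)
The proof splits into three stages matching (a), (b), (c). Throughout, identify $Z$ with its Carathéodory extension $\partial\mathbb{D}\to\Lambda(G)$ and use the fact that Möbius self-maps of $\mathbb{D}$ are unitarily implemented on $L_2(\partial\mathbb{D})$ by operators commuting with the Riesz projection $P$, hence with $F=2P-1$. For (a), the image $\Lambda(G)$ is a quasi-circle of Hausdorff dimension $p$, so Pommerenke-type boundary estimates for conformal maps onto quasi-disks equip $Z|_{\partial\mathbb{D}}$ with an appropriate fractional Besov smoothness sharp at exponent $1/p$. A Peller-type characterization of weak-Schatten Hankel-type commutators on the circle then places $[F,Z]\in\mathcal{L}_{p,\infty}$, with the expected bilateral bound $\mu_n([F,Z])\asymp n^{-1/p}$.

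For (b), set $\ell_\varphi(f):=\varphi((f\circ Z)|[F,Z]|^p)$ on $C(\Lambda(G))$. From (a) and boundedness of $\varphi$ on $\mathcal{L}_{1,\infty}$, $\ell_\varphi$ is a continuous linear functional, so Riesz representation produces a finite complex Borel measure $\mu_\varphi$ with $\ell_\varphi(f)=\int f\,d\mu_\varphi$. The heart of the argument is to show $\mu_\varphi$ is $p$-dimensional geometric, i.e.\ $\int(f\circ g)|g'|^p\,d\mu_\varphi=\int f\,d\mu_\varphi$ for every $g\in G$. Via $g\circ Z=Z\circ\pi(g)$ and $U_{\pi(g)}F=FU_{\pi(g)}$ (where $U_{\pi(g)}$ is the Möbius unitary on $L_2(\partial\mathbb{D})$), this should reduce to the chain-rule identity
\[
M_{|g'\circ Z|^p}\cdot|[F,Z]|^p\;\equiv\;|[F,Z\circ\pi(g)]|^p\pmod{\ker\varphi},
\]
combined with the unitary equivalence $|[F,Z\circ\pi(g)]|^p=U_{\pi(g)}^{-1}|[F,Z]|^p\,U_{\pi(g)}$ and the trace property of $\varphi$. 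Once $\mu_\varphi$ is $p$-geometric, its total variation $|\mu_\varphi|$ is likewise $p$-geometric (because $|g'|^p>0$), and Patterson--Sullivan uniqueness of the $p$-dimensional geometric probability measure forces $|\mu_\varphi|\propto\nu$; writing $\mu_\varphi=h\,|\mu_\varphi|$ with $|h|=1$, the equivariance makes $h$ $G$-invariant $\nu$-a.e., and ergodicity of the $G$-action on $(\Lambda(G),\nu)$ forces $h$ constant, yielding $\mu_\varphi=c(G,\varphi)\,\nu$.

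For (c), power invariance of $\omega$ lets the Dixmier trace detect operators whose singular values decay exactly at the rate $n^{-1}$; combined with the upper bound from (a) and a matching lower bound $\mu_n([F,Z])\gtrsim n^{-1/p}$ (whose sharpness is forced by $\dim_H\Lambda(G)$ being \emph{exactly} $p$, so that any strictly faster decay would improve the Besov regularity of $Z$ and shrink the Hausdorff dimension of its image), this yields $c(G,\mathrm{Tr}_\omega)=\mathrm{Tr}_\omega(|[F,Z]|^p)>0$. The hardest point will be the chain-rule identity in (b): a naive Leibniz expansion of $[F,Z\circ\pi(g)]$ produces cross-terms whose absolute $p$-th powers sit at the boundary between $\mathcal{L}_{p,\infty}$ and its norm closure, so the difference $M_{|g'\circ Z|^p}|[F,Z]|^p-|[F,Z\circ\pi(g)]|^p$ must be controlled via sharp Schatten--Lorentz estimates matched precisely to the $n^{-1/p}$ singular-value profile, modulo the kernel of every bounded trace on $\mathcal{L}_{1,\infty}$.
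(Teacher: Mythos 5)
Your proposal reproduces the correct high-level architecture for parts (b) and (c) — Riesz representation, equivariance modulo the kernel of the trace, Sullivan uniqueness, and a zeta-function lower bound — but at each of the three genuinely hard points it substitutes a gesture for an argument, and two of those gestures point in the wrong direction.

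For part \eqref{mainta}, you appeal to ``Pommerenke-type boundary estimates for conformal maps onto quasi-disks'' to place $Z$ in a Besov class of exponent $1/p$. This is not a general fact about quasi-disks: for an arbitrary quasi-circle of Hausdorff dimension $p$ there is no identity linking $p$ to the critical exponent of a weighted Bergman norm of $Z'$. The actual mechanism in the paper is group-theoretic self-similarity. One first proves, via Sullivan's counting estimate for the orbit of the origin in hyperbolic $3$-space, that $\{|g_{21}|^{-2}\}_{g\in G}\in \ell_{p,\infty}(G)$ (Lemmas \ref{reduction to sullivan}--\ref{main kleinian lemma}); one then uses the tiling of $\mathbb{D}$ by translates $\pi(g)\mathbb{F}$ of a compact fundamental domain to bound $(1-|z|^2)|Z'(z)|$ on $\pi(g)\mathbb{F}$ by $|g_{21}|^{-2}$ up to a uniform constant, and finally a real-interpolation version of the Peller-type criterion (Lemmas \ref{first interpolation lemma}--\ref{second interpolation lemma}) turns the resulting $L_{p,\infty}(\mathbb{D},\nu_{-2})$ membership of $(1-|z|^2)|Z'(z)|$ into $[F,Z]\in\mathcal{L}_{p,\infty}$. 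The Hausdorff dimension enters not through quasi-disk geometry but through the identity of the critical exponent of the Poincar\'e series with $\dim_H\Lambda(G)$, which is a theorem for geometrically finite groups.

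For part \eqref{maintb}, you correctly isolate the chain-rule identity
\[
M_{|g'\circ Z|^p}\cdot|[F,Z]|^p\;\equiv\;|[F,g\circ Z]|^p \pmod{(\mathcal{L}_{1,\infty})_0}
\]
as the heart of the matter and then declare, accurately, that it ``must be controlled via sharp Schatten--Lorentz estimates.'' But this is precisely where the entire new content of the paper lives (Lemmas \ref{first integral formula}, \ref{second integral formula}, \ref{main lemma}, \ref{final lemma}): a double-operator-integral identity for $X^p-Y^p$, an operator-theoretic computation of $B^pA^p-(A^{1/2}BA^{1/2})^p$ with a Schwartz-kernel integral remainder, and a careful verification that each error term lands in $(\mathcal{L}_{1,\infty})_0$. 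A Leibniz expansion, as you observe, does produce terms ``at the boundary,'' and no amount of generic Lorentz bookkeeping closes the gap without the structural identity from Lemma \ref{main lemma}. Your proposal leaves the central lemma unproved. On the other hand, your treatment of the possibly complex measure $\mu_\varphi$ — passing to $|\mu_\varphi|$, invoking ergodicity to conclude the phase $h$ is constant — is a genuine refinement: the paper invokes Sullivan's uniqueness for probability measures and does not spell out the reduction from a signed/complex geometric measure. That detail is worth keeping.

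For part \eqref{maintc}, the claim that ``any strictly faster decay would improve the Besov regularity of $Z$ and shrink the Hausdorff dimension of its image'' is not a valid implication, and the paper does not argue this way. The paper instead proves a divergence estimate $\liminf_{s\to0}s\|[F,Z]\|_{p+s}>0$ (Lemma \ref{maintc core}) by running the fundamental-domain computation in reverse, using the \emph{lower} bound in Sullivan's orbit-counting (Lemma \ref{reduction to sullivan invert}); this is a statement about the $\zeta$-function, not a pointwise lower bound on singular values, and the positivity of $c(G,\mathrm{Tr}_\omega)$ for power-invariant $\omega$ then follows from the identification $\mathrm{Tr}_\omega(A)=(\omega\circ\log)(t\mapsto\frac1t\mathrm{Tr}(A^{1+1/t}))$ (Lemma \ref{dix to zeta}). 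You would need to prove the lower orbit-counting input; the heuristic you give is not a substitute.
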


The statement \eqref{maintc} provides a large class of traces for which $c(G,\varphi)>0.$ The notion of power invariance for the limiting process $\omega$ is explained in Section \ref{app1}.

Theorem \ref{main theorem} was stated in \cite{Connes} and the proof\footnote{which was joint work with D. Sullivan to whom the first author is indebted for his generosity in sharing his geometric insight.} was sketched there after the statement of the Theorem and using a number of lemmas but the reference [538] was never published and the detailed proof is thus unpublished even if the various steps were described in \cite{Connes}. It is thus very valuable to make them available while proving a more general result and introducing variants in the proposed proof in \cite{Connes}. The variants concern the estimate of the growth of the Poincar\'e series which in \cite{Connes} is attributed to Corollary 10 of \cite{sullivan} but the precise relation with the two forms of the absolute Poincare series is assumed without a precise reference. This relation is due to the convex co-compactness of the action of the quasi Fuchsian group inside hyperbolic three space, but in this paper the same estimate is obtained using a different method. The other important point not contained in \cite{Connes} is the proof of the Lemma $3.\beta.11,$ which is stated there without proof.

We are  grateful to our colleagues Christopher Bishop, Magnus Goffeng, Denis Potapov and Caroline Series for their help in the preparation of this paper.

\section{Preliminaries}\label{prel section}

\subsection{General notation}\label{general}
Fix throughout a separable infinite dimensional Hilbert space $H.$ We let $\mathcal{L}(H)$ denote the $*-$algebra of all bounded operators on $H.$ It becomes a $C^*-$algebra when equipped with the uniform operator norm (denoted here by $\|\cdot\|_{\infty}$). For a compact operator $T$ on $H,$ let $\lambda(k,T)$ and $\mu(k,T)$ denote its $k$-th eigenvalue and $k$-th largest singular value (these are the eigenvalues of $|T|$ arranged in the descending order). The 
sequence $\mu(T)=\{\mu(k,T)\}_{k\geq0}$ is referred to as  the singular value sequence of the operator $T.$ The standard trace on $\mathcal{L}(H)$ is denoted by ${\rm Tr}.$ For an arbitrary operator $0\leq T\in\mathcal{L}(H),$ we set
$$n_T(t):={\rm Tr}(E_T(t,\infty)),\quad t>0,$$
where 
$E_T{(a,b)}$ stands for the  spectral projection of a self-adjoint operator $T$ corresponding to the interval $(a,b).$ Fix an orthonormal basis in $H$ (the particular choice of basis is inessential). We identify the  algebra $l_{\infty}$ of bounded sequences with the subalgebra of all diagonal operators with respect to the chosen basis. For a given sequence $\alpha\in l_{\infty},$ we denote the corresponding diagonal operator by ${\rm diag}(\alpha).$

Similarly, let $(X,\kappa)$ be a measure space (finite or infinite, atomless or atomic). For a measurable function $x$ on $(X,\kappa),$ we write
$$n_{|x|}(t)=\kappa(\{u:\ |x|(u)>t\}),\quad \mu(s,x)=\inf\{t:\ n_{|x|}(t)>s\}.$$

\subsection{Principal ideals $\mathcal{L}_{p,\infty}$ and infinitesimals of order $\frac1p$}\label{pinfinity}

For a given $0<p<\infty,$ we let $\mathcal{L}_{p,\infty}$ denote the principal ideal in $\mathcal{L}(H)$ generated by the operator ${\rm diag}(\{(k+1)^{-1/p}\}_{k\geq0}).$ Equivalently,
$$\mathcal{L}_{p,\infty}=\{T\in\mathcal{L}(H): \mu(k,T)=O((k+1)^{-1/p})\}.$$
These ideals, for different $p,$ all admit an equivalent description in terms of spectral projections, namely
\begin{equation}\label{tlp}
T\in\mathcal{L}_{p,\infty}\Longleftrightarrow n_{|T|}(\frac1n)=O(n^p).
\end{equation}
We also have
\begin{equation}\label{115}
|T|^p\in\mathcal{L}_{1,\infty}\Longleftrightarrow \mu^p(k,T)=O((k+1)^{-1})\Longleftrightarrow T\in\mathcal{L}_{p,\infty}.
\end{equation}
The ideal $\mathcal{L}_{p,\infty},$ $0<p<\infty,$ is equipped with a natural quasi-norm\footnote{A quasinorm satisfies the norm axioms, except that the triangle inequality is replaced by $||x+y||\leq K(||x||+||y||)$ for some uniform constant $K>1$.}
$$\|T\|_{p,\infty}=\sup_{k\geq0}(k+1)^{1/p}\mu(k,T),\quad T\in\mathcal{L}_{p,\infty}.$$
However, for $1<p<\infty,$ it is technically convenient to use an equivalent norm
$$\|T\|_{p,\infty}=\sup_{n\geq0}(n+1)^{\frac1p-1}\sum_{k=0}^n\mu(k,T),\quad T\in\mathcal{L}_{p,\infty}.$$

The following H\"older property (see \cite{BS4} Section 6 of Chapter 11) is widely used throughout the paper:
\begin{equation}\label{lpi mult}
A_k\in\mathcal{L}_{p_m,\infty},\ 1\leq m\leq n,\Longrightarrow \prod_{m=1}^nA_m\in\mathcal{L}_{p,\infty},\ \frac1p=\sum_{m=1}^n\frac1{p_m}.
\end{equation}

Similarly, let $(X,\kappa)$ be a measure space (finite or infinite, atomless or atomic). We define a function space
$$L_{p,\infty}(X,\kappa)=\{x\mbox{ is $\kappa-$measurable }:\ \mu(t,x)=O(t^{-\frac1p})\}.$$

In \cite{Connes}, a compact operator $T\in\mathcal{L}(H)$ is called an infinitesimal. It is said to be of order $\alpha>0$ if it belongs to the ideal $\mathcal{L}_{\frac1{\alpha},\infty}.$ Equation \eqref{lpi mult} manifests the fundamental fact that the order of the product of infinitesimals is the sum of their orders.

\subsection{Traces on $\mathcal{L}_{1,\infty}$}

\begin{definition}\label{trace def} If $\mathcal{I}$ is an ideal in $\mathcal{L}(H),$ then a unitarily invariant linear functional $\varphi:\mathcal{I}\to\mathbb{C}$ is said to be a trace.
\end{definition}

Since $U^{-1}TU-T=[U^{-1},TU]$ for all $T\in\mathcal{I}$ and for all unitaries $U\in\mathcal{L}(H),$ and since the unitaries span $\mathcal{L}(H),$ it follows that traces are precisely the linear functionals on $\mathcal{I}$ satisfying the condition
$$\varphi(TS)=\varphi(ST),\quad T\in\mathcal{I}, S\in\mathcal{L}(H).$$
The latter may be reinterpreted as the vanishing of the linear functional $\varphi$ on the commutator 
subspace which is denoted $[\mathcal{I},\mathcal{L}(H)]$ and defined to be the linear span of all commutators $[T,S]:\ T\in\mathcal{I},$ $S\in\mathcal{L}(H).$ 

It is shown in \cite[Lemma 5.2.2]{LSZ} that $\varphi(T_1)=\varphi(T_2)$ whenever $0\leq T_1,T_2\in\mathcal{I}$ are such that the singular value sequences $\mu(T_1)$ and $\mu(T_2)$ coincide. For $p>1,$ the ideal $\mathcal{L}_{p,\infty}$ does not admit a non-zero trace while for $p=1,$ there exists a plethora of traces on $\mathcal{L}_{1,\infty}$ (see e.g. \cite{DFWW} or \cite{LSZ}). An example of a trace on $\mathcal{L}_{1,\infty}$ is the Dixmier trace introduced in \cite{Dixmier} that we now explain.

\begin{definition} The dilation semigroup on $L_{\infty}(0,\infty)$ is defined by setting
$$(\sigma_sx)(t)=x(\frac{t}{s}),\quad t,s>0.$$
In this paper {\it a dilation invariant extended limit} means a state on the algebra $L_{\infty}(0,\infty)$ invariant under $\sigma_s,$ $s>0,$ which vanishes on every function with bounded support.
\end{definition}

\noindent{\bf Dixmier trace}. Let $\omega$ be a dilation invariant extended limit. Then the functional ${\rm Tr}_{\omega}:\mathcal{L}_{1,\infty}^+\to\mathbb{C}$ defined by setting\footnote{Here, singular value function is defined by the formula $\mu(A)=\sum_{k\geq0}\mu(k,A)\chi_{(k,k+1)}.$}
$${\rm Tr}_{\omega}(A)=\omega\Big(t\to\frac1{\log(1+t)}\int_0^t\mu(u,A)du\Big),\quad 0\leq A\in\mathcal{L}_{1,\infty},$$
is additive and, therefore, extends to a trace on $\mathcal{L}_{1,\infty}.$ We call such traces  {\it Dixmier traces}.

These traces clearly depend on the choice of the functional $\omega$ on $L_{\infty}(0,\infty).$ Using a slightly different definition, this notion of trace was applied in \cite{Connes} in the setting of noncommutative geometry. We also remark that the assumption used by Dixmier of translation invariance for the functional $\omega$ is redundant (see \cite[Section IV.2.$\beta$]{Connes} or \cite[Theorem 6.3.6]{LSZ}).

An extensive discussion of traces, and more recent developments in the theory, may be found in \cite{LSZ} including a discussion of the following facts.
\begin{enumerate}[{\rm (a)}]
\item All Dixmier traces on $\mathcal{L}_{1,\infty}$ are positive.
\item All positive traces on $\mathcal{L}_{1,\infty}$ are continuous in the quasi-norm topology.
\item There exist positive traces on $\mathcal{L}_{1,\infty}$ which are not Dixmier traces  (see \cite{SSUZ-pietsch}).
\item There exist traces on $\mathcal{L}_{1,\infty}$ which fail to be continuous (see \cite{DFWW}).
\end{enumerate}

\subsection{Kleinian groups}

A Fuchsian (resp.~Kleinian) group is Poincar\' e's name for a discrete subgroup of ${\rm PSL}(2,\mathbb{R})$ (resp.~of ${\rm PSL}(2,\mathbb{C})$). We are interested in Kleinian groups which are obtained by deforming certain Fuchsian groups. A nice deformation of a Fuchsian group uniformizing a compact Riemann surface is called by Bers a quasi-Fuchsian group (\cite{BE}). The corresponding action on the complex sphere $\bar{\mathbb{C}}$ is topologically conjugate to the action of the Fuchsian group and Poincar\' e noticed the  deformation of the round circle of the Fuchsian group into a topological Jordan curve with remarkable properties. 
This ``so called curve" in the words of  Poincar\' e is now understood to have very nice conformally self-similar properties. We give below the formal definitions (\ref{defn kleinian}, \ref{fuchsian}) of Kleinian, Fuchsian and quasi-Fuchsian groups and work with intrinsic properties of the Kleinian  groups with no mention of the deformation.

We let ${\rm SL}(2,\mathbb{C})$ be the group of all $2\times 2$ complex matrices with determinant $1.$ We identify the group ${\rm PSL}(2,\mathbb{C})={\rm SL}(2,\mathbb{C})/\{\pm 1\}$ and its action on the complex sphere $\bar{\mathbb{C}}$ (see \cite{markushevich}) by fractional linear transformations. The element
$$g=
\begin{pmatrix}
g_{11}&g_{12}\\
g_{21}&g_{22}
\end{pmatrix}\in{\rm SL}(2,\mathbb{C})
\mbox{ represents the mapping }
z\to\frac{g_{11}z+g_{12}}{g_{21}z+g_{22}},\quad z\in\bar{\mathbb{C}}.
$$

The following definition of a Kleinian group is taken from \cite{maskit} II.A. We refer the reader to \cite{maskit} for more advanced properties of Kleinian groups.

\begin{definition}\label{defn kleinian} Let $G\subset {\rm PSL}(2,\mathbb{C})$ be a discrete subgroup. We say that
\begin{enumerate}[{\rm (a)}]
\item $G$ is freely discontinuous at the point $z\in\bar{\mathbb{C}}$ if there exists a neighborhood $U\ni z$ such that $g(U)\cap U=\varnothing$ for every $1\neq g\in G.$
\item $G$ is Kleinian if it is freely discontinuous at some point $z\in\bar{\mathbb{C}}.$
\end{enumerate}
\end{definition}

The set of all points $z\in\bar{\mathbb{C}}$ at which $G$ is {\it not} freely discontinuous is called the limit set of $G$ and is denoted by $\Lambda(G).$ This set is either infinite or consists of $0,$ $1$ or $2$ points. The latter $3$ cases correspond to the so-called {\it elementary} Kleinian groups, which are usually dropped from the consideration.

The definition below can be found in \cite{maskit} on p. 103 and p. 192, respectively\footnote{More precisely what we call \lq\lq quasi-Fuchsian\rq\rq corresponds to \lq\lq quasi-Fuchsian of the first kind\rq\rq}.

\begin{definition}\label{fuchsian} A Kleinian group $G$ is called
\begin{enumerate}[{\rm (a)}]
\item Fuchsian (of the first kind) if its limit set is a circle.
\item quasi-Fuchsian if its limit set is a closed Jordan curve.
\end{enumerate}
\end{definition}

It is known that a limit set of a finitely generated quasi-Fuchsian group (which is not Fuchsian) has Hausdorff dimension strictly greater than $1$ (see Corollary 1.7 in \cite{BJ}).

It is known that $(\bar{\mathbb{C}}\backslash\Lambda(G))/G$ is a Riemann surface for an arbitrary Kleinian group $G.$ The following definition is taken from \cite{BJ}.

\begin{definition} A Kleinian group $G$ is called analytically finite if its Riemann surface $(\bar{\mathbb{C}}\backslash\Lambda(G))/G$ is of finite type; i.e., a finite union of compact surfaces with at most finitely many punctures and branch points.
\end{definition}

We need the important notion of a $p-$dimensional geometric measure on $\bar{\mathbb{C}}.$

\begin{definition} Let $G$ be a Kleinian group. The measure $\nu$ on $\bar{\mathbb{C}}$ is called $p-$dimen\-sional geometric (relative to $G$) if $d(\nu\circ g)(z)=|g'|^p(z)d\nu(z)$ for every $g\in G.$
\end{definition}

An important condition for existence and uniqueness of geometric measures can be found in \cite{sullivan84} (see Theorem 1 there). Our proof of Theorem \ref{main theorem} \eqref{maintb} also delivers, via the Riesz Representation Theorem, the existence of a $p-$dimensional geometric measure concentrated on $\Lambda(G)$ (for the case when $p$ is the Hausdorff dimension of $\Lambda(G)$).

A subgroup in $G$ is called parabolic if it fixes exactly one point in $\bar{\mathbb{C}}.$

The notion of a fundamental domain $\mathbb{F}\subset\bar{\mathbb{C}}$ of a Kleinian group $G$ is defined in \cite{maskit}, II.G. In particular, the sets $\{g\mathbb{F}\}_{g\in G},$ are pairwise disjoint.

We also need the notion of the Hausdorff dimension of a set $X\subset\mathbb{C}$ (applied to the set $\Lambda(G)$ in this text).

\begin{definition} We say that the Hausdorff dimension of a set $X\subset\mathbb{C}$ does not exceed $q$ if there exist balls $B(a_i,r_i)$ such that
$$X\subset\cup_i B(a_i,r_i),\quad \sum_i r_i^q<\infty.$$
The infimum of all such $q$ is called the Hausdorff dimension of a set $X\subset\mathbb{C}.$
\end{definition}

\begin{remark}{\rm In what follows, we may assume without loss of generality that our group $G$ does not contain elliptic elements.  By Selberg's Lemma, there is a torsion-free subgroup $G_0\subset G$ which has finite index in $G.$ The limit set of $G_0$ is the limit set of $G$. Since every finite index subgroup in a finitely generated group is itself finitely generated (see p.~55 in \cite{Rose}), it follows that the conditions of Theorem \ref{main theorem} hold for the group $G_0.$ The proof of this theorem constructs a geometric measure for the subgroup of ${\rm PSL}(2,\mathbb{C})$ of invariance of the limit set of $G_0$ and hence  for the group $G.$ Moreover the uniqueness of the geometric measure for $G_0$ implies uniqueness for $G$. In addition to that, the group $G_0$ does not contain elliptic elements. Indeed, an elliptic element is conjugate in ${\rm PSL}(2,\mathbb{C})$ to a unitary element. Since $G_0$ is discrete, it follows that every elliptic element has finite order; since $G_0$ is torsion free, it follows that there are no elliptic elements.

This remark was written for the reason that some authors do not allow branches in the Riemann surfaces. It is sometimes hard to check whether a particular paper in the reference allows branches or not. The  Riemann surface of a Kleinian group without elliptic elements does not have branches, which makes it easier for the reader. 
}
\end{remark}
\subsection{Action of ${\rm PSL}(2,\mathbb{C})$ on hyperbolic space}

Let us briefly recall how the group ${\rm PSL}(2,\mathbb{C})$ acts on the three dimensional hyperbolic space. We refer the reader to Section 1.2 in \cite{elstrodt} for details.

By definition, the unit ball model $\mathbb{B}$ of hyperbolic space  is the open unit ball of $\mathbb{R}^3$ equipped with the following Riemannian metric.
$$ds^2=4\frac{(du_0)^2+(du_1)^2+(du_2)^2}{(1-u_0^2-u_1^2-u_2^2)^2},\quad u=(u_0,u_1,u_2)\in\mathbb{B}.$$
The Riemannian metric generates a distance in $\mathbb{B}.$ We do not need the (complicated) distance formula, but only the fact that (see formula (2.5) on p.~10 in \cite{elstrodt})
\begin{equation}\label{dist from 0}
{\rm dist}(u,\mathbf{0})=\log(\frac{1+|u|}{1-|u|}),\quad u\in\mathbb{B}.
\end{equation}
Here, $u=(u_0,u_1,u_2)$ is identified with the quaternion $u_0+u_1i+u_2j$ and $|u|$ denotes the norm of the quaternion (which  coincides with the Euclidean norm of $u$).

For a matrix $g\in {\rm SL}(2,\mathbb{C}),$ consider the matrix $\pi(g)$ of quaternions defined as follows
\begin{equation}\label{sl2 rep h3}
\pi(g)=\frac12
\begin{pmatrix}
1&-j\\
-j&1
\end{pmatrix}
g
\begin{pmatrix}
1&j\\
j&1
\end{pmatrix}=
\begin{pmatrix}
a&c'\\
c&a'
\end{pmatrix},\quad |a|^2-|c|^2=1.
\end{equation}
Here, the quaternions $a$ and $c$ are given by the following formulae.
\begin{equation}\label{ac def}
a=\frac12(g_{11}+\bar{g}_{22})+\frac12(g_{12}-\bar{g}_{21})j,\quad c=\frac12(g_{21}+\bar{g}_{12})+\frac12(g_{22}-\bar{g}_{11})j.
\end{equation}
Note that $|a|^2-|c|^2=1.$ The operation $a\to a'$ is the inner automorphism implemented by the quaternion $k,$ it acts as follows
$$(a_0+a_1i+a_2j+a_3k)'=a_0-a_1i-a_2j+a_3k, \ \forall a_j\in \mathbb R.$$
The action of the group ${\rm SL}(2,\mathbb{C})$ on $\mathbb{B}$ is given by the formula
\begin{equation}\label{hyp action}
\pi(g):u\to(au+c')(cu+a')^{-1},\quad u\in\mathbb{B}.\
\end{equation}
By Proposition 1.2.3 in \cite{elstrodt}, this action consists of isometries of $\mathbb{B}.$ Formulae \eqref{dist from 0}, \eqref{ac def} and \eqref{hyp action} are crucially used in the proof of Lemma \ref{reduction to sullivan} below.

\subsection{Bochner integration} The following definition of measurability can be found e.g. in \cite{hf} (see Definition 3.5.4 there).

\begin{definition}\label{bweak} Let $X$ be a Banach space. A function $f:(-\infty,\infty)\to X$ is called
\begin{enumerate}[{\rm (a)}]
\item strongly measurable if there exists a sequence of $X$-valued simple functions converging to $f$ almost everywhere.
\item weakly measurable if the mapping $s\to \langle f(s),y\rangle$ is measurable for every $y\in X^*.$
\end{enumerate} 
\end{definition}

If the Banach space $X$ is separable, then the Pettis Measurability Theorem (see e.g. Theorem 3.5.3 in \cite{hf}) states the equivalence of the notions above.

A strongly measurable function $f$ is Bochner integrable if
\begin{equation}\label{bochner integrability}
\int_{-\infty}^{\infty}\|f(s)\|_Xds<\infty.
\end{equation}
Theorem 3.7.4 in \cite{hf} states that there exists a sequence $\{f_n\}_{n\geq0}$ of simple $X$-valued functions such that
$$\int_{-\infty}^{\infty}\|(f_n-f)(s)\|_Xds\to0,\quad n\to\infty.$$
The Bochner integral is now defined as
$$\int_{-\infty}^{\infty}f(s)ds\stackrel{def}{=}\lim_{n\to\infty}\int_{-\infty}^{\infty}f_n(s)ds.$$
Its key feature is that
$$\Big\|\int_{-\infty}^{\infty}f(s)ds\Big\|_X\leq\int_{-\infty}^{\infty}\|f(s)\|_Xds.$$

\subsection{Weak integration in $\mathcal{L}(H)$}\label{weak int} The following definitions (and subsequent construction of a weak integral) are folklore. For example, one can look at p.~77 in \cite{rudin} and put the topological space $X$ there to be $\mathcal{L}(H)$ equipped with the strong operator topology. Every functional on $X$ can be written as a linear combination of $x\to\langle x\xi,\eta\rangle,$ $\xi,\eta\in H.$

\begin{definition} A function $s\to f(s)$ with values in $\mathcal{L}(H)$ is measurable in the weak operator topology if, for every vectors $\xi,\eta\in H,$ the function
$$s\to\langle f(s)\xi,\eta\rangle,\quad s\in\mathbb{R},$$
is measurable.
\end{definition}

For such functions, there is notion of weak integral. Note that the scalar-valued mapping
$$s\to\sup_{\|\xi\|,\|\eta\|\leq1}\langle f(s)\xi,\eta\rangle=\|f(s)\|_{\infty},\quad s\in\mathbb{R},$$
is measurable.

Let the function $f:\mathbb{R}\to\mathcal{L}(H)$ be measurable in the weak operator topology. We say that $f$ is integrable in the weak operator topology if
\begin{equation}\label{necessary-condition}
\int_{\mathbb{R}}\|f(s)\|_{\infty}ds<\infty.
\end{equation}

Define a sesquilinear form
$$(\xi,\eta)\to\int_{\mathbb{R}}\langle f(s)\xi,\eta\rangle ds,\quad\xi,\eta\in H.$$
It is immediate that
$$|(\xi,\eta)|\leq\int_{\mathbb{R}}\|f(s)\|_{\infty}ds\cdot\|\xi\|\|\eta\|,\quad\xi,\eta\in H.$$
That is, for a fixed $\xi\in H,$ the mapping $\eta\to(\xi,\eta)$ defines a bounded anti-linear functional on $H.$ It follows from the Riesz Lemma (description of the dual of a Hilbert space) that there exists an element $x_{\xi}\in H$ such that $(\xi,\eta)=\langle x_{\xi},\eta\rangle.$ The mapping $\xi\to x_{\xi}$ is linear and bounded. The operator which maps $\xi$ to $x_{\xi}$ is called the {\it weak integral} of the mapping $s\to f(s),$ $s\in\mathbb{R}.$

The so-defined weak integral satisfies the following properties.
\begin{enumerate}[{\rm (a)}]
\item If the mapping $s\to f(s)$ is integrable in the weak operator topology, then
$$\Big\|\int_{-\infty}^{\infty}f(s)ds\Big\|_{\infty}\leq\int_{-\infty}^{\infty}\|f(s)\|_{\infty}ds.$$
\item If the mapping $s\to f(s)$ is integrable in the weak operator topology and if $A\in\mathcal{L}(H),$ then $s\to A\cdot f(s)$ is also integrable in the weak operator topology and
$$\int_{\mathbb{R}}A\cdot f(s)ds=A\cdot\int_{\mathbb{R}}f(s)ds.$$
\item If the mapping $s\to f(s)$ is Bochner integrable in some Banach ideal in $\mathcal{L}(H),$ then it is integrable in the weak operator topology. Its Bochner integral then equals to the weak one.
\end{enumerate}

\subsection{Double operator integrals}\label{doi} Here, we state the definition and basic properties of Double 0perator Integrals which were developed by Birman and Solomyak in \cite{BS1,BS2,BS3}. We refer the reader to \cite{PS-crelle} for the proofs and for more advanced properties.

Heuristically, the double operator integral $T^{X,Y}_{\phi}$, where $X$ and $Y$ are self-adjoint operators and $\phi$ is a bounded Borel measurable function on ${\rm Spec}(X)\times {\rm Spec}(Y),$ is defined using the spectral decompositions:
$$T_{\phi}^{X,Y}(A)=\iint \phi(\lambda,\mu)dE_X(\lambda)AdE_Y(\mu).$$
This formula defines a bounded operator from $\mathcal{L}_2$ to $\mathcal{L}_2.$ However, we want to consider it as a bounded operator on other ideals --- and this leads to  difficulty unless the function $\phi$ is good enough.

To specify the class of \lq\lq good\rq\rq~functions, we use the integral tensor product of \cite{Peller}, of $L^\infty({\rm Spec}(X),\mu_X)$ by $L^\infty({\rm Spec}(Y),\mu_Y)$  where the $\mu$'s denote the spectral measures.  The integral projective tensor products were introduced in \cite{Peller} where it was proved that the maximal class of functions for which the double operator integrals can be defined for arbitrary bounded linear operators coincides with the integral projective tensor product of $L^\infty({\rm Spec}(X),\mu_X)$ by $L^\infty({\rm Spec}(Y),\mu_Y).$ Thus, we consider only those functions $\phi$ which admit a representation
\begin{equation}\label{integral tensor product}
\phi(\lambda,\mu)=\int_{\Omega}a(\lambda,s)b(\mu,s)d\kappa(s),
\end{equation}
where $(\Omega,\kappa)$ is a measure space and where 
\begin{equation}\label{119}
\int_{\Omega}\sup_{\lambda\in {\rm Spec}(X)}|a(\lambda,s)|\cdot\sup_{\mu\in{\rm Spec}(Y)}|b(\mu,s)|d\kappa(s)<\infty.
\end{equation}
 For those functions, we write
\begin{equation}\label{123}
T_{\phi}^{X,Y}(A)=\int_{\Omega}a(X,s)Ab(Y,s)d\kappa(s),
\end{equation}
where the latter integral is understood in the weak sense (the integrand is measurable in the weak operator topology and the condition \eqref{necessary-condition} holds thanks to \eqref{119}).

For the function $\phi$ from the integral tensor product, we have (see Theorem 4 in \cite{PS-crelle}) that $T^{X,Y}_{\phi}:\mathcal{L}_1\to\mathcal{L}_1$ and $T^{X,Y}_{\phi}:\mathcal{L}_{\infty}\to\mathcal{L}_{\infty}.$ In particular, we have that $T^{X,Y}_{\phi}:\mathcal{L}_{p,\infty}\to\mathcal{L}_{p,\infty}$ for $p>1.$

One of the key properties of Double Operator Integrals is that they respect algebraic operations (see e.g. Proposition 2.8 in \cite{DSW} or formula (1.6) in \cite{BS}). Namely,
\begin{equation}\label{doi algebraic}
T_{\phi_1+\phi_2}^{X,Y}=T_{\phi_1}^{X,Y}+T_{\phi_2}^{X,Y},\quad T_{\phi_1\cdot \phi_2}^{X,Y}=T_{\phi_1}^{X,Y}\circ T_{\phi_2}^{X,Y}.
\end{equation}

\subsection{Fredholm modules}

The following is taken from \cite{Connes}.

\begin{definition} Let $\mathcal{A}$ be a $*-$algebra represented on the Hilbert space $H.$ Let $F\in\mathcal{L}(H)$ be self-adjoint unitary operator. We call a triple $(F,H,\mathcal{A})$ Fredholm module if $[F,a]$ is compact for every $a\in\mathcal{A}.$
\end{definition}

The infinitesimal $[F,a]$ is called the quantum derivative of the element $a$ (see Chapter IV in \cite{Connes} for the studies of quantum derivatives).

A Fredholm module is called $(p,\infty)-$summable if $[F,a]\in\mathcal{L}_{p,\infty}$ for every $a\in\mathcal{A}.$

Part \eqref{mainta} of Theorem \ref{main theorem} exactly states that the Fredholm module $(F,L_2(\mathbb{S}^1),\mathcal{A})$ is $(p,\infty)-$summable, where $\mathcal{A}$ is the $*-$algebra generated by $Z.$

\section{Proof of Theorem \ref{main theorem} \eqref{mainta}}

\subsection{Growth of matrix coefficients in $G$} Let $G$ be a Kleinian group. As stated in Corollary II.B.7 in \cite{maskit}, the series $\sum_{g\in G}|g'(z)|^2$ converges for a.e. $z\in\bar{\mathbb{C}}$ (with respect to the Lebesgue measure). The critical exponent of $G$ is defined\footnote{Sullivan uses a slightly different definition in \cite{sullivan}, but they are equivalent.} (see e.g. p. 323 in \cite{Connes}) as follows
$$p=\inf\{q:\ \sum_{g\in G}|g'(z)|^q<\infty\mbox{ for a.e. }z\in\bar{\mathbb{C}}\}.$$

Let $\|g\|_{\infty}$ denote the uniform norm of the matrix $g\in{\rm SL}(2,\mathbb{C})$ as an operator on the Hilbert space $\mathbb{C}^2.$ Equip our countable group $G$ with counting measure and define $l_{p,\infty}(G)$ as in Subsection \ref{pinfinity}.

\begin{lemma}\label{reduction to sullivan} Let $G\subset {\rm PSL}(2,\mathbb{C})$ be a Kleinian group. If $p$ is its critical exponent, then $\{\|g\|_{\infty}^{-2}\}_{g\in G}\in l_{p,\infty}(G).$
\end{lemma}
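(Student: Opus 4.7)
The plan is to translate the assertion, which is about Euclidean operator norms of matrices in ${\rm SL}(2,\mathbb{C})$, into an orbit counting statement for the action of $G$ on the hyperbolic ball $\mathbb{B}$, and then invoke Sullivan's estimate for the Poincar\'e series. Unpacking the definitions in Subsection \ref{pinfinity}, membership $\{\|g\|_\infty^{-2}\}_{g\in G} \in l_{p,\infty}(G)$ is equivalent to the counting bound
\begin{equation*}
\#\{g \in G : \|g\|_\infty^2 \leq R\} = O(R^p), \qquad R \to \infty,
\end{equation*}
so everything hinges on how $\|g\|_\infty$ compares with a quantity natural to the hyperbolic action of $g$.

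I would first establish the comparison $\|g\|_\infty^2 \asymp e^{d(\mathbf{0},\,\pi(g)\mathbf{0})}$ with universal constants. From \eqref{hyp action} one has $\pi(g)\mathbf{0} = c'(a')^{-1}$, whose quaternion norm equals $|c|/|a|$; plugging into \eqref{dist from 0} and using $(|a|+|c|)(|a|-|c|) = |a|^2-|c|^2 = 1$ yields
\begin{equation*}
d(\mathbf{0},\pi(g)\mathbf{0}) = \log\frac{|a|+|c|}{|a|-|c|} = 2\log(|a|+|c|).
\end{equation*}
On the other hand, the two singular values $\sigma_1 \geq \sigma_2$ of $g$ satisfy $\sigma_1\sigma_2 = |\det g| = 1$, so $\|g\|_\infty^2 = \sigma_1^2$ is within a factor of $2$ of $\operatorname{tr}(g^*g) = \sigma_1^2 + \sigma_2^2$; and a direct computation from \eqref{sl2 rep h3}--\eqref{ac def} gives $\operatorname{tr}(g^*g) = 2(|a|^2 + |c|^2)$, which by $|a|^2 = 1 + |c|^2$ is comparable to $(|a|+|c|)^2$ uniformly in $g$.

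Combining the two, the sought weak-$l^p$ statement becomes the orbit-counting estimate
\begin{equation*}
N_G(r) := \#\{g \in G : d(\mathbf{0}, \pi(g)\mathbf{0}) \leq r\} = O(e^{pr}), \qquad r \to \infty.
\end{equation*}
This is precisely Sullivan's sharp orbit-counting theorem for Kleinian groups (Corollary 10 of \cite{sullivan}), provided one matches the critical exponent defined via $\sum_{g\in G} |g'(z)|^q$ on $\bar{\mathbb{C}}$ with that of the Poincar\'e series $\sum_{g\in G} e^{-q\, d(\mathbf{0},\pi(g)\mathbf{0})}$ in the ball model; this identification is standard and proceeds via the boundary interpretation of $|g'(z)|$ in terms of Busemann cocycles.

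The main obstacle I foresee is precisely the sharpness of the counting bound. Convergence of the Poincar\'e series for every $q > p$ amounts to $\{\|g\|_\infty^{-2}\}_{g\in G} \in \bigcap_{q>p} l^q(G)$, which does \emph{not} in general imply weak-$l^p$ membership (standard examples like $a_n = (\log n)/n$ exhibit the gap). What is really required is the sharp exponential growth rate of $N_G(r)$ with exactly the exponent $p$, and that is the nontrivial content imported from Sullivan's work; this is the \emph{reduction} announced in the lemma's name, and it is the same delicate point the introduction highlights as the one for which the paper will supply an alternative route.
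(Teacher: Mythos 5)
Your proposal is correct and takes essentially the same route as the paper: translate the weak-$\ell_p$ assertion into an orbit-counting bound in the hyperbolic ball, compare $\|g\|_\infty$ to the quaternion entry $|a|$ via formulae \eqref{sl2 rep h3}--\eqref{ac def}, and invoke Sullivan's sharp orbit-counting estimate (Corollary 5, or the right-hand side of Corollary 10, in \cite{sullivan}). The only minor difference is that you establish the two-sided comparison $\|g\|_\infty^2\asymp e^{d(\mathbf{0},\pi(g)\mathbf{0})}$, whereas the paper only needs the one inequality $|a|\le 2\|g\|_\infty$ here and saves the parallelogram-rule reverse estimate for the converse Lemma \ref{reduction to sullivan invert}.
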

\begin{proof} By Corollary 5 in \cite{sullivan} (see also the right hand side estimate in Corollary 10 in \cite{sullivan}), we have
$${\rm Card}(\{g\in G:\ {\rm dist}((\pi(g))(\mathbf{0}),\mathbf{0})\leq r\})\leq Ce^{pr}.$$
Using the formula \eqref{dist from 0} and denoting $e^{-r}$  by $t,$ we arrive at
$${\rm Card}(\{g\in G:\ \frac{1-|(\pi(g))(\mathbf{0})|}{1+|(\pi(g))(\mathbf{0})|}\geq t\})\leq Ct^{-p}.$$
Since $|(\pi(g))(\mathbf{0})|<1,$ it follows that
$${\rm Card}(\{g\in G:\ 1-|(\pi(g))(\mathbf{0})|^2\geq 4t\})\leq Ct^{-p}.$$
Since $|a'|=|a|$ and $|c'|=|c|,$ it follows from \eqref{hyp action} that
$$(\pi(g))(\mathbf{0})=c'(a')^{-1}\mbox{ and, therefore, }1-|(\pi(g))(\mathbf{0})|^2=1-\frac{|c|^2}{|a|^2}=\frac1{|a|^2}.$$
Thus,
$${\rm Card}(\{g\in  G:\ \frac1{|a|^2}\geq 4t\})\leq Ct^{-p}.$$
It is immediate from \eqref{ac def} that $|a|\leq 2\|g\|_{\infty}.$ Therefore,
$${\rm Card}(\{g\in G:\ \frac1{4\|g\|_{\infty}^2}\geq 4t\})\leq Ct^{-p}.$$
This concludes the proof.
\end{proof}

By Theorem II.B.5 in \cite{maskit}, $g_{21}\neq0$ for every $1\neq g\in G.$ This allows us to state a stronger version of Lemma \ref{reduction to sullivan}.

\begin{lemma}\label{main kleinian lemma} Let $G$ be a Kleinian group and let $p$ be the critical exponent of $ G.$ If $\infty$ is not in the limit set of $G,$ then $\{|g_{21}|^{-2}\}_{1\neq g\in G}\in l_{p,\infty}(G).$
\end{lemma}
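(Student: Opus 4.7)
\emph{Plan.} I will deduce from $\infty\notin\Lambda(G)$ that $\|g\|_\infty\le C|g_{21}|$ for all but finitely many $g\in G$, whereupon the conclusion follows immediately from Lemma~\ref{reduction to sullivan} since altering a sequence on a finite set leaves it in $l_{p,\infty}$.

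First, Definition~\ref{defn kleinian}(a) applied at $\infty$ supplies $M>0$ and a neighborhood $U=\{|z|>M\}\cup\{\infty\}$ with $g(U)\cap U=\varnothing$ for every $1\neq g\in G$. Applying this to $g$ and to $g^{-1}$ shows that $g(\infty)=g_{11}/g_{21}$ and $g^{-1}(\infty)=-g_{22}/g_{21}$ lie in $\bar{\mathbb{C}}\setminus U$, so
\[|g_{11}|\le M|g_{21}|,\qquad|g_{22}|\le M|g_{21}|,\qquad 1\neq g\in G.\]
The determinant relation $g_{11}g_{22}-g_{12}g_{21}=1$ then gives $|g_{12}|\,|g_{21}|\le 1+M^2|g_{21}|^2$. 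In particular, when $|g_{21}|\ge 1$ all four entries are bounded by a constant multiple of $|g_{21}|$, so $\|g\|_\infty\le C(M)|g_{21}|$, which is the desired comparison on $\{g:|g_{21}|\ge 1\}$.

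\emph{The main obstacle} is to rule out a pathology in the region $|g_{21}|<1$, where the $1/|g_{21}|$ term in the bound on $|g_{12}|$ could dominate and no pointwise comparison $\|g\|_\infty\lesssim|g_{21}|$ holds. I claim that $\Sigma:=\{g\in G\setminus\{1\}:|g_{21}|<1\}$ is finite. Suppose otherwise and select distinct $g_n\in\Sigma$. Along a subsequence either (i) $|(g_n)_{21}|$ stays bounded away from $0$, in which case the preceding bounds force $\|g_n\|_\infty$ to be uniformly bounded; but Lemma~\ref{reduction to sullivan} implies $\{g:\|g\|_\infty\le R\}$ is finite for each $R$, a contradiction; or (ii) $|(g_n)_{21}|\to 0$, in which case the determinant relation yields $|(g_n)_{12}|\ge (2|(g_n)_{21}|)^{-1}$ eventually, whence
\[|g_n(0)|=\Bigl|\frac{(g_n)_{12}}{(g_n)_{22}}\Bigr|\ge\frac{1}{2M|(g_n)_{21}|^2}\longrightarrow\infty.\]
But then the sets $\{g_n^{-1}(U)\}_n$ are pairwise disjoint (since $g_mg_n^{-1}(U)\cap U=\varnothing$ for $m\neq n$), yet each contains $0$ once $g_n(0)\in U$, which is absurd. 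Hence $\Sigma$ is finite.

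Combining the two cases yields $|g_{21}|^{-2}\le C(M)^2\,\|g\|_\infty^{-2}$ off the finite set $\Sigma$, and the lemma now follows from Lemma~\ref{reduction to sullivan}. The delicate point, where the hypothesis $\infty\notin\Lambda(G)$ is used decisively (both to bound $|g_{11}|,|g_{22}|$ and to rule out case~(ii)), is the finiteness of $\Sigma$.
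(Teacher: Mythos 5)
Your proof is correct, and its core coincides with the paper's argument: use free discontinuity at $\infty$ to get $|g_{11}|,|g_{22}|=O(|g_{21}|)$, control $|g_{12}|$ via the determinant relation, conclude $\|g\|_\infty=O(|g_{21}|)$, and feed this into Lemma~\ref{reduction to sullivan}. The one genuine divergence is how the $1/|g_{21}|$ term in the bound on $|g_{12}|$ is tamed. The paper simply invokes Theorem II.B.5 of Maskit, which provides a uniform positive lower bound on $|g_{21}|$ over $1\neq g\in G$ (for a Kleinian group with $\infty$ an ordinary point not fixed by any nontrivial element). You instead prove the weaker but entirely sufficient statement that $\Sigma=\{g\neq 1:|g_{21}|<1\}$ is finite, by a self-contained dichotomy: if $|(g_n)_{21}|$ stays bounded away from $0$ then $\|g_n\|_\infty$ is uniformly bounded, contradicting the fact that norm-bounded subsets of a discrete group are finite; if $|(g_n)_{21}|\to0$ then $|g_n(0)|\to\infty$, so eventually $0$ lies in two of the pairwise disjoint sets $g_n^{-1}(U)$, which is absurd. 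Your route essentially re-derives the relevant content of Maskit's theorem from the definition of free discontinuity, trading a citation for a short explicit argument. A small remark: in case (i) the appeal to Lemma~\ref{reduction to sullivan} is heavier machinery than needed; discreteness of $G$ in $\mathrm{PSL}(2,\mathbb{C})$ already forces any norm-bounded subset of $G$ to be finite.
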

\begin{proof} By the assumption, $\infty\notin\Lambda(G).$ Hence, $G$ is freely discontinuous at $\infty.$ It follows that $\{g(\infty)\}_{1\neq g\in G}$ is a bounded set. Note that $g(\infty)=\frac{g_{11}}{g_{21}}.$ Thus, $|g_{11}|=O(|g_{21}|).$

Clearly,
$$g^{-1}=
\begin{pmatrix}
g_{22}&-g_{12}\\
-g_{21}&g_{11}
\end{pmatrix}.
$$
Applying the preceding paragraph to the element $g^{-1},$ we conclude that $|g_{22}|=O(|g_{21}|).$

By Theorem II.B.5 in \cite{maskit}, the sequence $\{|g_{21}|\}_{1\neq g\in G}$ is bounded from below. Thus,
$$|g_{12}|=|\frac{g_{11}g_{22}-1}{g_{21}}|\leq\frac{|g_{11}|\cdot|g_{22}|}{|g_{21}|}+\frac1{|g_{21}|}=O(|g_{21}|)+O(1)=O(|g_{21}|).$$

Combining the estimates in the preceding paragraphs, we conclude that $\|g\|_{\infty}=O(|g_{21}|).$ The assertion follows from Lemma \ref{reduction to sullivan}.
\end{proof}

The following lemma provides the converse to Lemma \ref{main kleinian lemma} (under additional assumptions on the group $G$).

\begin{lemma}\label{reduction to sullivan invert} Let $G\subset {\rm PSL}(2,\mathbb{C})$ be as in Theorem \ref{main theorem}. There exists $C>0$ such that
$$\Big\{\frac1{(k+1)^{\frac1p}}\Big\}_{k\geq0}\leq C\mu\Big(\{|g_{21}|^{-2}\}_{1\neq g\in G}\Big).$$
\end{lemma}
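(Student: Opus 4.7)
The plan is to mirror the proof of Lemma \ref{reduction to sullivan}, replacing the upper bound on the orbit-counting function by a corresponding lower bound, and then to trace the resulting inequality back to $|g_{21}|$ rather than $\|g\|_{\infty}$. Concretely, what we have to show is the tail estimate $n_{\{|g_{21}|^{-2}\}}(t)\gtrsim t^{-p}$ as $t\to 0^+$, since this is equivalent, through the identities in Subsection \ref{general}, to $\mu(k,\{|g_{21}|^{-2}\}_{1\ne g\in G})\gtrsim (k+1)^{-1/p}$.

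For the main computation I would proceed in three steps. \emph{First,} invoke the orbit counting lower bound of Sullivan: under the assumptions of Theorem \ref{main theorem} (finitely generated quasi-Fuchsian without parabolic elements), the action on $\mathbb{B}$ is convex co-compact, so by the left-hand inequality in Corollary 10 of \cite{sullivan} there exist $c,R>0$ with
$$\#\{g\in G:\ \operatorname{dist}(\pi(g)(\mathbf{0}),\mathbf{0})\leq r\}\geq ce^{pr},\qquad r\geq R.$$
\emph{Second,} translate this into an estimate on $|a|$. Setting $t=e^{-r}$ and using \eqref{dist from 0}, the argument already carried out in the proof of Lemma \ref{reduction to sullivan} shows that $\operatorname{dist}(\pi(g)(\mathbf{0}),\mathbf{0})\leq r$ is equivalent, up to multiplicative constants, to $1-|\pi(g)(\mathbf{0})|^2\geq \tilde{c}t$, and by \eqref{hyp action} one has $1-|\pi(g)(\mathbf{0})|^2=|a|^{-2}$. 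Hence
$$\#\{g\in G:\ |a|^{-2}\geq \tilde{c}t\}\geq c\,t^{-p}.$$
\emph{Third,} compare $|a|$ with $|g_{21}|$. From \eqref{ac def}, one has $|a|\leq 2\|g\|_{\infty}$; and from the proof of Lemma \ref{main kleinian lemma} (which holds whenever $\infty\notin\Lambda(G)$, an arrangement we may always make by a preliminary conjugation in ${\rm PSL}(2,\mathbb{C})$), we have $\|g\|_{\infty}=O(|g_{21}|)$. Therefore $|a|^{-2}\geq c'''|g_{21}|^{-2}$, which plugged into the previous display yields $\#\{g\in G:\ |g_{21}|^{-2}\geq c_*t\}\geq c\,t^{-p}$, i.e.\ exactly the desired lower bound on the distribution function.

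The main obstacle is Step one. The introduction explicitly warns that the passage in \cite{Connes} from Corollary 10 of \cite{sullivan} to the two forms of the absolute Poincar\'e series is based on convex co-compactness, and advertises a \emph{different method} in the present paper. The natural substitute I would try is a shadow-lemma / packing argument on $\Lambda(G)$: the shadows at $\mathbf{0}$ of orbit balls $B(\pi(g)(\mathbf{0}),r_0)$ of fixed hyperbolic radius $r_0$ have Euclidean diameter comparable to $1-|\pi(g)(\mathbf{0})|\asymp|a|^{-2}\asymp|g_{21}|^{-2}$; by a Vitali-type covering argument, any efficient cover of $\Lambda(G)$ must contain at least $\asymp\epsilon^{-p}$ such shadows of diameter $\leq\epsilon$ once one knows that $\Lambda(G)$ has Hausdorff dimension exactly $p$ and supports a non-trivial $p$-dimensional geometric measure. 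Carrying this out rigorously --- that is, getting an \emph{effective} lower density bound on $\Lambda(G)$ matching the upper density bound implicit in Lemma \ref{reduction to sullivan} --- is the only non-routine ingredient; the rest of the lemma is a verbatim repetition of the manipulations already carried out for Lemmas \ref{reduction to sullivan} and \ref{main kleinian lemma}.
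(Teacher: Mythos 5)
Your Steps~1 and~2 match the paper: the authors do in fact establish convex co-compactness rigorously (Bers \cite{BE2} Theorem~4 to get a quasiconformal deformation, Gehring--V\"ais\"al\"a \cite{GW} Theorem~12 to get Hausdorff dimension $<2$, the Ahlfors Finiteness Theorem, Bishop--Jones \cite{BJ} Theorem~1.2 to get geometric finiteness, Sullivan \cite{sullivan84} Theorem~1 to identify the critical exponent, and Bowditch \cite{Bowditch} to pass from geometric finiteness without parabolics to convex co-compactness) and then invoke the left-hand inequality in Corollary~10 of \cite{sullivan} exactly as you do. Your worry about the ``different method'' remark in the introduction is unfounded here --- no shadow-lemma or packing argument is needed; the precise chain of references above is what replaces Connes' unpublished step, and your suggested Vitali-type alternative would be a significant and unnecessary detour.

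The genuine problem is a direction error in your Step~3. You correctly derive $|a|\leq 2\|g\|_{\infty}$ and $\|g\|_{\infty}=O(|g_{21}|)$, hence $|a|^{-2}\geq c'''|g_{21}|^{-2}$, \emph{but this is the wrong inequality}. To pass from
$\#\{g:|a|^{-2}\geq \tilde{c}t\}\geq ct^{-p}$
to
$\#\{g:|g_{21}|^{-2}\geq c_*t\}\geq ct^{-p}$,
you need each $g$ in the first set to lie in the second, i.e.\ you need $|g_{21}|^{-2}\gtrsim |a|^{-2}$ (equivalently $|g_{21}|\lesssim |a|$). Your inequality $|a|^{-2}\gtrsim|g_{21}|^{-2}$ gives the containment the other way and yields no lower bound on the $|g_{21}|$-count. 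What actually works is the elementary observation, used by the paper, that \eqref{ac def} together with the parallelogram rule and $|a|^2\geq|c|^2$ gives
$$8|a|^2\geq 4|a|^2+4|c|^2=2\big(|g_{11}|^2+|g_{22}|^2+|g_{12}|^2+|g_{21}|^2\big)\geq 2|g_{21}|^2,$$
so $|g_{21}|\leq 2|a|$, valid for every $g\in{\rm SL}(2,\mathbb{C})$ without any appeal to Lemma~\ref{main kleinian lemma} or to a preliminary conjugation moving $\infty$ off $\Lambda(G)$. With that inequality in hand the counting estimate goes through exactly as you intended.
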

\begin{proof} By Theorem 4 of \cite{BE2} the group $G$ is a quasiconformal deformation of a Fuchsian group of the first kind. In particular, its limit set $\Lambda(G)$ is a quasi-circle. By Theorem 12 in \cite{GW}, the Hausdorff dimension of $\Lambda(G)$ is strictly less than $2.$ The group $G$ is finitely generated and thus by the Ahlfors Finiteness Theorem, $G$ is analytically finite. It follows now from Theorem 1.2 in \cite{BJ} that $G$ is geometrically finite. Theorem 1 in \cite{sullivan84} states that the critical exponent equals $p.$ It is proved in \cite{Bowditch} that a geometrically finite Kleinian group without parabolic elements is convex co-compact. Thus, the results of Section 3 in \cite{sullivan} are applicable.

By the left hand side estimate in Corollary 10 in \cite{sullivan}), we have
$${\rm Card}(\{g\in G:\ {\rm dist}((\pi(g))(\mathbf{0}),\mathbf{0})\leq r\})\geq Ce^{pr}.$$
Using the formula \eqref{dist from 0} and denoting $e^{-r}$  by $t,$ we arrive at
$${\rm Card}(\{g\in G:\ \frac{1-|(\pi(g))(\mathbf{0})|}{1+|(\pi(g))(\mathbf{0})|}\geq t\})\geq Ct^{-p}.$$
Since $|(\pi(g))(\mathbf{0})|<1,$ it follows that
$${\rm Card}(\{g\in G:\ 1-|(\pi(g))(\mathbf{0})|^2\geq t\})\geq Ct^{-p}.$$
Since $|a'|=|a|$ and $|c'|=|c|,$ it follows that
$$(\pi(g))(\mathbf{0})\stackrel{\eqref{hyp action}}{=}c'(a')^{-1}\mbox{ and, therefore, }1-|(\pi(g))(\mathbf{0})|^2=1-\frac{|c|^2}{|a|^2}\stackrel{\eqref{sl2 rep h3}}{=}\frac1{|a|^2}.$$
Thus,
$${\rm Card}(\{g\in  G:\ |a|^2\leq t^{-1}\})\geq Ct^{-p}.$$

We infer from \eqref{ac def} that
$$4|a|^2=|g_{11}+\bar{g}_{22}|^2+|g_{12}-\bar{g}_{21}|^2,\quad 4|c|^2=|g_{21}+\bar{g}_{12}|^2+|g_{22}-\bar{g}_{11}|^2.$$
By the parallelogram rule, we have
$$8|a|^2\geq 4|a|^2+4|c|^2=2|g_{11}|^2+2|g_{22}|^2+2|g_{12}|^2+2|g_{21}|^2\geq 2|g_{21}|^2.$$
It follows that
$${\rm Card}(\{g\in  G:\ \frac14|g_{21}|^2\leq t^{-1}\})\geq Ct^{-p}.$$
This concludes the proof.
\end{proof}

\subsection{When does the quantum derivative fall into $\mathcal{L}_{p,\infty}?$}

In this subsection, we find a sufficient condition for the quantum derivative to belong to the ideal $\mathcal{L}_{p,\infty},$ $p>1.$ A similar result for the ideal $\mathcal{L}_p$ is available as Theorem 4 and Proposition 5 on p.~316 in \cite{Connes}. We get the required estimate by real interpolation.

Let $\alpha\neq -1$ and let $\nu_{\alpha}$ be the measure on $\mathbb{D}$ defined by the formula
$$d\nu_{\alpha}(z)=|\alpha+1|(1-|z|^2)^{\alpha}dm(z),$$
where $m$ is the normalised Lebesgue measure on $\mathbb{D}.$ For $\alpha>-1,$ this is a finite measure space; for $\alpha<-1,$ this is infinite measure space. Let ${\rm Hol}(\mathbb{D})$ be the space of all holomorphic functions on $\mathbb{D}.$ The symbol $[\cdot,\cdot]_{\theta,\infty}$ denotes the functor of real interpolation (see e.g. Definition 2.g.12 in \cite{LT2}).

\begin{lemma}\label{first interpolation lemma} If $1<p_0<2,$ then
$$[L_{p_0}(\mathbb{D},\nu_{p_0-2})\cap {\rm Hol}(\mathbb{D}),L_2(\mathbb{D},\nu_0)\cap {\rm Hol}(\mathbb{D})]_{\theta,\infty}=$$
$$=[L_{p_0}(\mathbb{D},\nu_{p_0-2}),L_2(\mathbb{D},\nu_0)]_{\theta,\infty}\cap {\rm Hol}(\mathbb{D}).$$
\end{lemma}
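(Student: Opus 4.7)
The plan is to realise both sides as the images of a common bounded projection and invoke the standard interpolation-of-retracts principle. Put $Y_0=L_{p_0}(\mathbb{D},\nu_{p_0-2})$, $Y_1=L_2(\mathbb{D},\nu_0)$, $X_i=Y_i\cap{\rm Hol}(\mathbb{D})$. Since pointwise evaluation is continuous on a weighted Bergman space, each $X_i$ is closed in $Y_i$. The inclusion $[X_0,X_1]_{\theta,\infty}\subseteq[Y_0,Y_1]_{\theta,\infty}\cap{\rm Hol}(\mathbb{D})$ is immediate from monotonicity of the real interpolation functor (elements of $[X_0,X_1]_{\theta,\infty}$ are already holomorphic, and $X_i\hookrightarrow Y_i$ is bounded). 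The substantive direction is the reverse one.

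For that direction I would use the (unweighted) Bergman projection
\[
(Pf)(z)=\int_{\mathbb{D}}\frac{f(w)}{(1-z\bar w)^{2}}\,dm(w),
\]
which reproduces every holomorphic function in either $Y_0$ or $Y_1$. The classical Forelli--Rudin boundedness criterion states that $P$ is bounded on $L_p(\mathbb{D},(1-|w|^{2})^{\alpha}dm)$ whenever $1<p<\infty$ and $-1<\alpha<p-1$. Applying this to $(p,\alpha)=(p_0,p_0-2)$ (the strip $-1<p_0-2<p_0-1$ is equivalent to $p_0>1$) and to $(p,\alpha)=(2,0)$ shows that $P$ is a bounded linear projection on both $Y_0$ and $Y_1$, whose range in each case is precisely $X_i$.

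With this in hand, functoriality of $[\cdot,\cdot]_{\theta,\infty}$ yields a bounded map $P:[Y_0,Y_1]_{\theta,\infty}\to[X_0,X_1]_{\theta,\infty}$. Now take any $f\in[Y_0,Y_1]_{\theta,\infty}\cap{\rm Hol}(\mathbb{D})$. Since $f\in Y_0+Y_1$ and $f$ is holomorphic, the reproducing property of the Bergman kernel gives $Pf=f$. Hence $f=Pf\in[X_0,X_1]_{\theta,\infty}$, completing the reverse inclusion.

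The only delicate point is the boundedness of $P$ on the weighted space $Y_0$: here the weight $(1-|z|^{2})^{p_0-2}$ is singular at the boundary when $p_0<2$, and one must check that the weight exponent $p_0-2$ stays strictly above $-1$, which is exactly the hypothesis $p_0>1$. Once that is in place, the standard Schur-test proof of the Forelli--Rudin theorem (using the asymptotics of $\int_{\mathbb{D}}|1-z\bar w|^{-s}(1-|w|^{2})^{t}dm(w)$) gives the required estimate, and the rest of the argument is formal application of the retract/coretract lemma for real interpolation.
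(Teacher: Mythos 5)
Your proof is correct and follows essentially the same route as the paper: both arguments exhibit the (unweighted) Bergman projection as a bounded retraction onto the holomorphic subspace at each endpoint (you cite Forelli--Rudin, the paper cites HKZ Theorem 1.10 — the same result) and then invoke the interpolation-of-retracts principle. The only cosmetic difference is that you spell out the two inclusions separately where the paper writes the equality of interpolation spaces in one line.
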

\begin{proof} Clearly, $L_2(\mathbb{D},\nu_0)\cap {\rm Hol}(\mathbb{D})$ is a closed subset in $L_2(\mathbb{D},\nu_0).$ By Proposition 1.2 in \cite{HKZ}, $L_{p_0}(\mathbb{D},\nu_{p_0-2})\cap {\rm Hol}(\mathbb{D})$ is a closed subspace in $L_{p_0}(\mathbb{D},\nu_{p_0-2}),$ so that left hand side is well defined.

The following map (see Proposition 1.4 in \cite{HKZ}) is called Bergman projection.
$$(P_0f)(z)=\int_{\mathbb{D}}\frac{f(w)d\nu_0(w)}{(1-z\bar{w})^2},\quad z\in\mathbb{C}.$$
By Theorem 1.10 in \cite{HKZ}, we have that
$$P_0:L_{p_0}(\mathbb{D},\nu_{p_0-2})\to L_{p_0}(\mathbb{D},\nu_{p_0-2})\cap {\rm Hol}(\mathbb{D})$$
is a bounded mapping. Also, by Theorem 1.10 in \cite{HKZ}, we have that
$$P_0:L_2(\mathbb{D},\nu_0)\to L_2(\mathbb{D},\nu_0)\cap {\rm Hol}(\mathbb{D})$$
is a bounded mapping.

Therefore, for the left hand side of the equality in the statement of Lemma \ref{first interpolation lemma}, we have
$$LHS=[P_0(L_{p_0}(\mathbb{D},\nu_{p_0-2})),P_0(L_2(\mathbb{D},\nu_0))]_{\theta,\infty}=$$
$$=P_0([L_{p_0}(\mathbb{D},\nu_{p_0-2}),L_2(\mathbb{D},\nu_0)]_{\theta,\infty})=[L_{p_0}(\mathbb{D},\nu_{p_0-2}),L_2(\mathbb{D},\nu_0)]_{\theta,\infty}\cap {\rm Hol}(\mathbb{D}).$$
\end{proof}

The following lemma describes the class of functions $f$ on the unit circle $\partial\mathbb{D}$ for which its quantum derivative belongs to the weak ideal $\mathcal{L}_{p,\infty},$ $p>1.$ Here, the function space $L_{p,\infty}(\mathbb{D},\nu_{-2})$ is defined in Subsection \ref{pinfinity}.

\begin{lemma}\label{second interpolation lemma} Suppose $f:\partial\mathbb{D}\to\mathbb{C}$ has an extension to an analytic function on $\mathbb{D}.$ For $p>1,$ we have
$$\|[F,f]\|_{p,\infty}\leq c_p\|h\|_{L_{p,\infty}(\mathbb{D},\nu_{-2})},$$
where $h(z)=(1-|z|^2)|f'(z)|,$ $z\in\mathbb{D}.$
\end{lemma}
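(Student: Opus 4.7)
The plan is to upgrade the classical Peller--Connes bound
\[
\|[F,f]\|_{\mathcal{L}_p}\leq c_p\|h\|_{L_p(\mathbb{D},\nu_{-2})}
\]
(a reformulation of Theorem~4 and Proposition~5 on p.~316 of \cite{Connes}, using the identity $\|h\|_{L_p(\nu_{-2})}^p=\int_{\mathbb{D}}|f'(z)|^p(1-|z|^2)^{p-2}dm(z)$) to the asserted weak-type inequality by real interpolation between two exponents at which the strong bound holds.

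Set $B_p:=L_p(\mathbb{D},\nu_{p-2})\cap{\rm Hol}(\mathbb{D})$. Working modulo additive constants (which commute with $F$), the linear map $T$ sending $g\in B_p$ to $[F,f]$, where $f$ is the primitive of $g$ vanishing at $0$, is well defined on, say, polynomials, and by the Peller--Connes bound satisfies $\|T(g)\|_{\mathcal{L}_p}\leq c_p\|g\|_{B_p}$ for every $p>1$. Fix $1<p_0<p<2$ and $\theta\in(0,1)$ with $\frac1p=\frac{1-\theta}{p_0}+\frac{\theta}{2}$. Real interpolation then yields
\[
T\colon(B_{p_0},B_2)_{\theta,\infty}\to(\mathcal{L}_{p_0},\mathcal{L}_2)_{\theta,\infty}=\mathcal{L}_{p,\infty}
\]
boundedly, the last identity being the standard real-interpolation formula for Schatten ideals (see \cite{LSZ}).

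It remains to identify $(B_{p_0},B_2)_{\theta,\infty}$. By Lemma~\ref{first interpolation lemma}, this equals $(L_{p_0}(\nu_{p_0-2}),L_2(\nu_0))_{\theta,\infty}\cap{\rm Hol}(\mathbb{D})$. The key observation is that the multiplication operator $V\colon\phi\mapsto(1-|z|^2)\phi$ is an \emph{isometric isomorphism}
\[
V\colon L_p(\mathbb{D},\nu_{p-2})\longrightarrow L_p(\mathbb{D},\nu_{-2})
\]
for every $p>1$, as a direct calculation shows. Since $V$ is a single $p$-independent map, it implements an isomorphism between the interpolation couples $(L_{p_0}(\nu_{p_0-2}),L_2(\nu_0))$ and $(L_{p_0}(\nu_{-2}),L_2(\nu_{-2}))$. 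The latter couple sits on a common measure space, so standard Lorentz interpolation yields $(L_{p_0}(\nu_{-2}),L_2(\nu_{-2}))_{\theta,\infty}=L_{p,\infty}(\mathbb{D},\nu_{-2})$. Setting $g=f'$ and using that the quasi-norm $\|\cdot\|_{L_{p,\infty}(\nu_{-2})}$ depends only on the modulus of its argument,
\[
\|[F,f]\|_{p,\infty}=\|T(f')\|_{p,\infty}\leq c_p\|(1-|z|^2)f'\|_{L_{p,\infty}(\nu_{-2})}=c_p\|h\|_{L_{p,\infty}(\nu_{-2})}.
\]
A standard density argument extends this from polynomials to arbitrary analytic $f$ with $h\in L_{p,\infty}(\nu_{-2})$.

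The main technical point is the use of the $p$-independent substitution $V$: it reduces the otherwise awkward real-interpolation of two weighted $L_p$-spaces with genuinely different weights to real-interpolation inside the single weighted space $L_p(\mathbb{D},\nu_{-2})$, where the Lorentz identification is classical. A subsidiary observation is that the restriction $1<p<2$, imposed by the choice of upper endpoint $p_1=2$ and by the hypothesis of Lemma~\ref{first interpolation lemma}, is not a real loss because the Hausdorff dimension of a non-circular quasi-circle always lies strictly between $1$ and $2$.
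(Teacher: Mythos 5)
Your proof is correct and follows essentially the same route as the paper: real interpolation of the Connes $\mathcal{L}_p$ bound (Theorem~4 and Proposition~5 of \cite{Connes}) between the endpoints $p_0$ and $2$, with Lemma~\ref{first interpolation lemma} handling the restriction to holomorphic functions, so that working with the derivative $g=f'$ rather than with $f$ itself is purely cosmetic. You additionally make explicit, via the $p$-independent multiplier $V:\phi\mapsto(1-|z|^2)\phi$, the identification of $[L_{p_0}(\nu_{p_0-2}),L_2(\nu_0)]_{\theta,\infty}$ with (the $V$-preimage of) $L_{p,\infty}(\mathbb{D},\nu_{-2})$, a step the paper asserts without justification; the word ``isometric'' is a mild overstatement because of the normalizing factor $|\alpha+1|$ in the definition of $\nu_\alpha$, but $V$ is a bounded isomorphism in both directions at each endpoint, which is all the argument requires.
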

\begin{proof} Let $C_p$ be the collection of all $f:\mathbb{D}\to\mathbb{C}$ such that the mapping $z\to(1-|z|^2)f(z),$ $z\in\mathbb{D},$ belongs to the space $L_{p,\infty}(\mathbb{D},\nu_{-2}).$ If $\frac1p=\frac{1-\theta}{p_0}+\frac{\theta}{2},$ then
$$[L_{p_0}(\mathbb{D},\nu_{p_0-2}),L_2(\mathbb{D},\nu_0)]_{\theta,\infty}=C_p.$$

Let
$$A_p^{\frac1p}=\{f\in{\rm Hol}(\mathbb{D}):\ f'\in L_p(\mathbb{D},\nu_{p-2})\}$$
and let
$$D_p=\{f\in{\rm Hol}(\mathbb{D}):\ f'\in C_p\}.$$
It follows from Lemma \ref{first interpolation lemma} that
$$[A_{p_0}^{\frac1{p_0}},A_2^{\frac12}]_{\theta,\infty}=D_p.$$

By Theorem 4 and Proposition 5 on p.~316 in \cite{Connes}, we have
$$[F,f]\in\mathcal{L}_p\Longleftarrow f\in A_p^{\frac1p},\quad 1<p<\infty.$$
Applying real interpolation method to the Banach couples $(A_{p_0}^{\frac1{p_0}},A_2^{\frac12})$ and $(\mathcal{L}_{p_0},\mathcal{L}_2),$ we infer
$$\|[F,f]\|_{p,\infty}=\|[F,f]\|_{[\mathcal{L}_{p_0},\mathcal{L}_2]_{\theta,\infty}}\leq c_p\|f\|_{[A_{p_0}^{\frac1{p_0}},A_2^{\frac12}]_{\theta,\infty}}=\|f\|_{D_p}.$$
\end{proof}

\subsection{Proof of Theorem \ref{main theorem} \eqref{mainta}}

We are now ready to prove the first part of our main result.

\begin{proof}[Proof of Theorem \ref{main theorem} \eqref{mainta}] As explained in the (first few lines of the) proof of Lemma \ref{reduction to sullivan invert}, the group $G$ is geometrically finite. By Theorem 1 in \cite{sullivan84}, the critical exponent $\delta$ equals to the Hausdorff dimension $p$ of $\Lambda(G).$ Note that $p>1$ by Theorem 2 in \cite{BO}.

Consider $G$ acting on $\Sigma_{{\rm int}}.$ Let $\pi$ be the action of $G$ on the unit disk by the formula
\begin{equation}\label{fuchs action}
g\circ Z=Z\circ\pi(g).
\end{equation}
Since every $\pi(g)$ is a conformal automorphism of the unit disk, it is automatically fractional linear (see \cite{markushevich}). Thus, $\pi(G)$ is a group of fractional linear transformations preserving the unit circle, i.e. a Fuchsian group and it's limit set is the unit circle $\partial\mathbb{D}$, thus it is Fuchsian of the first kind.  As a group, $\pi(G)$ is isomorphic to $G$ and is, therefore, finitely generated.



We claim that the Fuchsian group $\pi(G)$ does not contain  parabolic elements. Assume the contrary: let $g\in G$ be such that $\pi(g)$ is parabolic. Hence, there exists a fixed point $w_0\in\partial\mathbb{D}$ of $\pi(g)$ such that $(\pi(g))^nw\to w_0$ as $n\to\pm\infty$ for every $w\in\mathbb{D}.$ Let $w=Z(z),$ $z\in\Sigma_{{\rm int}},$ and let $w_0=Z(z_0),$ $z_0\in\Lambda(G).$ By \eqref{fuchs action}, we have that $g^n(z)\to z_0$ as $n\to\pm\infty.$ Hence, $g\in G$ is parabolic,\footnote{ An element $g\in{\rm PSL}(2,\mathbb{C})$ is either parabolic or diagonalizable. If $g$ is diagonalizable, then (after conjugating $g$ by a fractional linear transform), we have that $g:z\to az$ for every $z\in\mathbb{C}.$ If $|a|<1,$ then $g^n(z)\to0$ as $n\to\infty$ and $g^n(z)\to\infty$ as $n\to-\infty$ for every $0\neq z\in\mathbb{C}.$ If $|a|>1,$ then $g^n(z)\to0$ as $n\to-\infty$ and $g^n(z)\to\infty$ as $n\to\infty$ for every $0\neq z\in\mathbb{C}.$ If $|a|=1$ and $a\neq 1,$ then the sequence $\{g^n(z)\}_{n\in\mathbb{Z}}$ diverges as $n\to\infty$ and as $n\to-\infty$ for every $0\neq z\in\mathbb{C}.$} which is not the case.

Since $\pi(G)$ is finitely generated and of the first kind, it follows from Theorem 10.4.3 in \cite{Beardon} that the Riemann surface $\mathbb{D}/\pi(G)$ has finite area. Taking into account that $\pi(G)$ does not have parabolic elements, we infer from Corollary 4.2.7 in \cite{Katok} that the Riemann surface $\mathbb{D}/\pi(G)$ is compact. By Corollary 4.2.3 and Theorem 3.2.2 in \cite{Katok}, $\pi(G)$ admits a fundamental domain $\mathbb{F}$ which is compactly supported in $\mathbb{D}.$

{\bf Step 1:} We claim that there exists a finite constant  such that for every $g\in G,$
$$\sup_{z\in \pi(g)\mathbb{F}}(1-|z|^2)|Z'(z)|\leq\frac{{\rm const}}{|g_{21}|^2}.$$

Indeed, we have $z=\pi(g)w,$ where $w\in\mathbb{F}.$ We have\footnote{This is a standard fact. Let $k:w\to\frac{\alpha w+\beta}{\bar{\beta}w+\bar{\alpha}},$ $|\alpha|^2-|\beta|^2=1$ be an arbitrary conformal automorphism of the unit disk. We have
$$\frac{|dk(w)|}{1-|k(w)|^2}=\frac{|\bar{\beta}w+\bar{\alpha}|^{-2}}{1-\frac{|\alpha w+\beta|^2}{|\bar{\beta}w+\bar{\alpha}|^2}}|dw|=\frac{|dw|}{|\bar{\beta}w+\bar{\alpha}|^2-|\alpha w+\beta|^2}=\frac{|dw|}{1-|w|^2}.$$}
$$1-|z|^2=(1-|w|^2)|(\pi(g))'(w)|.$$
It follows from the chain rule that
$$(1-|z|^2)|Z'(z)|=(1-|w|^2)\cdot |Z'(\pi(g)w)|\cdot |(\pi(g))'(w)|=(1-|w|^2)\cdot|(Z\circ\pi(g))'(w)|.$$
It follows from \eqref{fuchs action} and chain rule that
\begin{equation}\label{127}
(1-|z|^2)|Z'(z)|\stackrel{\eqref{fuchs action}}{=}(1-|w|^2)\cdot|(g\circ Z)'(w)|=|g'(Z(w))|\cdot (1-|w|^2)|Z'(w)|.
\end{equation}
Since $g'(u)=(g_{21}u+g_{22})^{-2}$ and $g^{-1}(\infty)=-\frac{g_{22}}{g_{21}},$ it follows that
\begin{equation}\label{129}
|g'(Z(w))|=\frac1{|g_{21}Z+g_{22}|^2}=\frac1{|g_{21}|^2}\cdot \frac1{|Z(w)-g^{-1}(\infty)|^2}.
\end{equation}
Thus, for $z\in\pi(g)\mathbb{F},$ we have, since $g^{-1}(\infty)$ stays in the unbounded component of the complement of the limit set $\Lambda(G)$ and thus $|Z(w)-g^{-1}(\infty)|\geq {\rm dist}(Z(\mathbb{F}),\Lambda(G)),$
$$(1-|z|^2)|Z'(z)|\leq \frac1{|g_{21}|^2}\cdot\frac1{{\rm dist}^2(Z(\mathbb{F}),\Lambda(G))}\cdot\sup_{w\in\mathbb{F}}(1-|w|^2)|Z'(w)|.$$
Since $\mathbb{F}$ is compact and $Z'|_{\mathbb{F}}$ is continuous, the claim follows.

{\bf Step 2:} Let $h(z)=(1-|z|^2)|Z'(z)|$ (see also the statement of Lemma \ref{second interpolation lemma}). It follows from Step 1 that
$$\|h\|_{L_{p,\infty}(\mathbb{D},\nu_{-2})}\leq\|h\chi_{\mathbb{F}}\|_{L_{p,\infty}(\mathbb{D},\nu_{-2})}+{\rm const}\cdot\|\sum_{1\neq g\in G}\frac1{|g_{21}|^2}\chi_{\pi(g)\mathbb{F}}\|_{L_{p,\infty}(\mathbb{D},\nu_{-2})}.$$
Recall that $\mathbb{F}$ is compactly supported in $\mathbb{D}$ and, therefore, $\nu_{-2}(\mathbb{F})<\infty.$ Let $\nu_{-2}(\mathbb{F})=a.$ Elements of the group $\pi(G)$ are conformal automorphisms of the unit disk; hence, isometries of the hyperbolic plane $\mathbb{H}^2.$ The measure $\nu_{-2}$ is a volume form of $\mathbb{H}^2$ and is, therefore, invariant with respect to its isometries. Hence, $\nu_{-2}$ is $\pi(G)-$invariant.\footnote{This fact can also be seen directly as follows. Let $k:z\to\frac{\alpha z+\beta}{\bar{\beta}z+\bar{\alpha}},$ $|\alpha|^2-|\beta|^2=1$ be an arbitrary conformal automorphism of the unit disk. Its Jacobian is exactly $|k'(z)|^2.$ Thus,
$$d(\nu_{-2}\circ k)(z)=\frac{d(m\circ k)(z)}{(1-|k(z)|^2)^2}=\frac{|k'(z)|^2}{(1-|k(z)|^2)^2}dm(z)=\frac{dm(z)}{(1-|z|^2)^2}=d\nu_{-2}(z).$$
This shows conformal invariance of the measure $\nu_{-2}.$} It follows that
\begin{equation}\label{131}
\nu_{-2}(\pi(g)\mathbb{F})=a,\quad\mbox{for every }g\in G.
\end{equation}
Thus,
$$\mu(\sum_{1\neq g\in G}\frac1{|g_{21}|^2}\chi_{\pi(g)\mathbb{F}})=\mu\Big(\Big\{\frac1{|g_{21}|^2}\Big\}_{1\neq g\in G}\otimes\chi_{(0,a)}\Big),$$
where $\mu$ on the left hand side is computed in the measure space $(\mathbb{D},\nu_{-2})$ and $\mu$ on the right hand side is computed in the algebra $(G\times (0,\infty),{\rm Card}\times m).$ Hence,
$$\|h\|_{L_{p,\infty}(\mathbb{D},\nu_{-2})}\leq\|h\chi_{\mathbb{F}}\|_{\infty}\|\chi_{(0,a)}\|_{p,\infty}+{\rm const}\cdot\Big\|\Big\{\frac1{|g_{21}|^2}\Big\}_{1\neq g\in G}\otimes\chi_{(0,a)}\Big\|_{p,\infty}.$$
It follows now from Lemma \ref{main kleinian lemma} that $h\in L_{p,\infty}(\mathbb{D},\nu_{-2}).$ The assertion follows now from Lemma \ref{second interpolation lemma}.
\end{proof}

The next lemma is the core part of the proof of Theorem \ref{main theorem} \eqref{maintc}. Its proof is similar to that of Theorem \ref{main theorem} \eqref{mainta}.

\begin{lemma}\label{maintc core} If $G$ is as in Theorem \ref{main theorem}, then
$$\liminf_{s\to0}s\|[F,Z]\|_{p+s}>0.$$
\end{lemma}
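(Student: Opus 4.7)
The plan is to mirror the proof of Theorem~\ref{main theorem}~\eqref{mainta}, systematically replacing its upper estimates by matching lower estimates, and to reduce the lemma to the lower bound on $\{|g_{21}|^{-2}\}_{g\in G}$ provided by Lemma~\ref{reduction to sullivan invert}. The target is the Dixmier-trace-flavoured bound $s\,\mathrm{Tr}(|[F,Z]|^{p+s})\gtrsim 1$ as $s\downarrow 0$, which is the shape in which the claim will be used in part~\eqref{maintc}: via the Carey--Phillips--Sukochev formula for power-invariant $\omega$, it delivers $\mathrm{Tr}_\omega(|[F,Z]|^p)>0$.

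First, I would upgrade Lemma~\ref{second interpolation lemma} to a two-sided estimate: for every $q>1$ and every holomorphic $f$ on $\mathbb{D}$,
$$c_q^{-1}\|h\|_{L_q(\mathbb{D},\nu_{-2})}\leq\|[F,f]\|_q\leq c_q\|h\|_{L_q(\mathbb{D},\nu_{-2})},\qquad h(z)=(1-|z|^2)|f'(z)|,$$
with $c_q$ locally bounded as $q\downarrow p$. The upper half is Lemma~\ref{second interpolation lemma}; the lower half is Peller's Schatten-class characterisation of Hankel operators, alternatively obtainable by repeating the real interpolation argument of Lemmas~\ref{first interpolation lemma}--\ref{second interpolation lemma} starting from two-sided rather than one-sided endpoint estimates.

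Second, I would establish a pointwise lower bound on $h$ on each translate of the fundamental domain, exactly dual to Step~1 of the proof of Theorem~\ref{main theorem}~\eqref{mainta}. By \eqref{127} and \eqref{129}, for $z=\pi(g)w$ with $w\in\mathbb{F}$,
$$(1-|z|^2)|Z'(z)|=\frac{(1-|w|^2)|Z'(w)|}{|g_{21}|^2\cdot|Z(w)-g^{-1}(\infty)|^2}.$$
Non-vanishing of $Z'$ together with compactness of $\mathbb{F}\subset\mathbb{D}$ gives $(1-|w|^2)|Z'(w)|\geq c_0>0$ on $\mathbb{F}$; since $\infty\notin\Lambda(G)$, the orbit $\{g^{-1}(\infty)\}_{g\in G}$ is bounded (by the same free-discontinuity argument used for $\{g(\infty)\}$ in the proof of Lemma~\ref{main kleinian lemma}), and together with boundedness of $Z(\mathbb{F})$ this yields $|Z(w)-g^{-1}(\infty)|\leq M$ uniformly. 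Hence $h(z)\geq c|g_{21}|^{-2}$ on $\pi(g)\mathbb{F}$.

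Combining the two steps with the $\pi(G)$-invariance of $\nu_{-2}$ (equation~\eqref{131}) and Lemma~\ref{reduction to sullivan invert} gives
$$\|h\|_{L_{p+s}(\mathbb{D},\nu_{-2})}^{p+s}\geq c^{p+s}\,\nu_{-2}(\mathbb{F})\sum_{1\neq g\in G}|g_{21}|^{-2(p+s)}\gtrsim\sum_{k\geq 0}(k+1)^{-1-s/p}\sim\frac{p}{s},$$
as $s\downarrow 0$, which combined with the lower half of step one yields the claim in the form needed for \eqref{maintc}. The principal obstacle is the uniform-in-$q$ control of $c_q$ as $q\downarrow p$ in step one: the endpoint $q=p$ is critical, so one must ensure the lower-bound constant does not collapse to zero, lest it neutralise the $1/s$ divergence coming from $\zeta(1+s/p)\sim p/s$.
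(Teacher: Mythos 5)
Your proposal mirrors the paper's argument almost step for step: the pointwise lower bound $h(z)\gtrsim|g_{21}|^{-2}$ on $\pi(g)\mathbb{F}$ from \eqref{127} and \eqref{129}, the use of disjointness of translates and $\pi(G)$-invariance of $\nu_{-2}$ together with Lemma~\ref{reduction to sullivan invert}, and a two-sided Besov/Schatten equivalence for holomorphic symbols, which the paper takes directly from Theorem~4 and Proposition~5 on p.~316 of \cite{Connes} rather than recasting Lemma~\ref{second interpolation lemma}. You are somewhat more explicit than the paper about keeping track of the $(p+s)$-th powers (writing $\|h\|_{L_{p+s}}^{p+s}\gtrsim p/s$ rather than eliding the exponent) and about the need for local boundedness of the equivalence constant as $q\downarrow p$, but the decomposition and the substance of the argument are the same.
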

\begin{proof} Let $h(z)=(1-|z|^2)|Z'(z)|,$ $z\in\mathbb{D}.$ For every $1\neq g\in G,$ it follows from \eqref{127} and \eqref{129} (in the proof of Theorem \ref{main theorem} \eqref{mainta}) that
$$\inf_{z\in \pi(g)\mathbb{F}}(1-|z|^2)|Z'(z)|\geq\frac1{|g_{21}|^2}\cdot\inf_{w\in\mathbb{F}}(1-|w|^2)|Z'(w)|\cdot\inf_{w\in\mathbb{F}}\frac1{|Z(w)-g^{-1}(\infty)|^2}\geq$$
$$\geq \frac1{|g_{21}|^2}\cdot\inf_{w\in\mathbb{F}}(1-|w|^2)|Z'(w)|\cdot\frac1{(\|Z\|_{\infty}+|g^{-1}(\infty)|)^2}\geq \frac{{\rm const}}{|g_{21}|^2}.$$
We have $\nu_{-2}(\pi(g)\mathbb{F})=a$ for every $g\in G.$ Since the sets $\{\pi(g)\mathbb{F}\}_{g\in G}$ are pairwise disjoint, it follows that
$$\|h\|_{L_{p+s}(\mathbb{D},\nu_{-2})}\geq{\rm const}\cdot\|\{|g_{21}|^{-2}\}_{1\neq g\in G}\|_{p+s}.$$
We infer from Lemma \ref{reduction to sullivan invert} that
$$\|\{|g_{21}|^{-2}\}_{1\neq g\in G}\|_{p+s}\geq{\rm const}\cdot\|\{(k+1)^{-\frac1p}\}_{k\geq0}\|_{p+s}\geq\frac{{\rm const}}{s},\quad s\downarrow0.$$

By Proposition 5 on p. 316 in \cite{Connes}, we have
$$\|Z\|_{A_{p+s}^{\frac1{p+s}}}\geq c_p\|h\|_{L_{p+s}(\mathbb{D},\nu_{-2})}\geq\frac{{\rm const}}{s},\quad s\downarrow0.$$
Since $Z$ is an analytic function on $\mathbb{D},$ it follows from Theorem 4 on p.~316 in \cite{Connes} that
$$\|[F,Z]\|_{p+s}\geq c_p^{-1}\|Z\|_{B_{p+s}^{\frac1{p+s}}}=c_p^{-1}\|Z\|_{A_{p+s}^{\frac1{p+s}}}\geq\frac{{\rm const}}{s},\quad s\downarrow0.$$
This completes the proof.
\end{proof}

\section{Integration in $(\mathcal{L}_{p,\infty})_0$, $p>1.$}\label{measurability section}

\begin{lemma}\label{first measurability lemma} Let $s\to Z(s)$ be a bounded function from $\mathbb{R}$ to $(\mathcal{L}_{p,\infty})_0.$ If it is measurable in the weak operator topology, then it is weakly measurable\footnote{See Definition \ref{bweak}} in $(\mathcal{L}_{p,\infty})_0.$
\end{lemma}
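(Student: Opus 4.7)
The plan is to exploit the duality $((\mathcal{L}_{p,\infty})_0)^{*}=\mathcal{L}_{p',1}$ (with $\tfrac1p+\tfrac1{p'}=1$), implemented by the trace pairing $(T,A)\mapsto {\rm Tr}(TA)$; this is standard and may be found in \cite{LSZ}. Under this identification, weak measurability of $s\mapsto Z(s)$ in the sense of Definition \ref{bweak} is equivalent to the measurability of the scalar function $s\mapsto{\rm Tr}(TZ(s))$ for each $T\in\mathcal{L}_{p',1}$. So the task reduces to verifying this measurability for every such $T$.

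First I would treat the case when $T$ has finite rank. Writing $T$ in the form $T=\sum_{k=1}^n \xi_k\langle\eta_k,\cdot\rangle$ for some $\xi_k,\eta_k\in H$, one has
$${\rm Tr}(TZ(s))=\sum_{k=1}^n\langle Z(s)\xi_k,\eta_k\rangle,$$
and each summand is measurable by the hypothesis that $s\mapsto Z(s)$ is measurable in the weak operator topology. Hence $s\mapsto{\rm Tr}(TZ(s))$ is a finite sum of measurable functions, so measurable.

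For a general $T\in\mathcal{L}_{p',1}$, I would approximate by finite rank operators. Since $p'<\infty$, the finite rank operators are dense in $\mathcal{L}_{p',1}$; choose finite rank $T_n$ with $\|T-T_n\|_{p',1}\to 0$. The H\"older-type inequality for Lorentz ideals yields
$$|{\rm Tr}((T-T_n)Z(s))|\leq c_p\,\|T-T_n\|_{p',1}\,\|Z(s)\|_{p,\infty}\leq c_p\,\|T-T_n\|_{p',1}\cdot\sup_{s\in\mathbb{R}}\|Z(s)\|_{p,\infty}.$$
By the boundedness hypothesis the supremum is finite, so ${\rm Tr}(T_nZ(s))\to{\rm Tr}(TZ(s))$ uniformly in $s$. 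A uniform limit of measurable functions is measurable, so $s\mapsto{\rm Tr}(TZ(s))$ is measurable for every $T\in\mathcal{L}_{p',1}$, which is exactly weak measurability of $Z(\cdot)$ in $(\mathcal{L}_{p,\infty})_0$.

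The only subtle point I expect is correctly invoking $((\mathcal{L}_{p,\infty})_0)^{*}=\mathcal{L}_{p',1}$ and the associated H\"older estimate; the density of finite rank operators in $\mathcal{L}_{p',1}$ (which fails for $\mathcal{L}_{p,\infty}$ itself and is the reason the lemma insists on the separable part $(\mathcal{L}_{p,\infty})_0$) plays a crucial role, and once these ingredients are at hand the rest of the argument is essentially a two-line approximation.
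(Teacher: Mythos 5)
Your argument is essentially identical to the paper's: both identify the dual of $(\mathcal{L}_{p,\infty})_0$ with the Lorentz ideal $\mathcal{L}_{q,1}$ (the paper reaches this via the noncommutative Yosida--Hewitt decomposition and Köthe duality, you state it directly), then approximate the dual element in the $\mathcal{L}_{q,1}$ norm by finite rank operators, use the Hölder pairing together with the uniform bound $\sup_s\|Z(s)\|_{p,\infty}<\infty$ to get uniform convergence of the scalar functions ${\rm Tr}(T_nZ(s))\to{\rm Tr}(TZ(s))$, and conclude by the fact that a uniform limit of measurable functions is measurable. The only cosmetic difference is how the duality is justified; the substance of the two proofs coincides.
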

\begin{proof} Let $\gamma$ be a bounded linear functional on $(\mathcal{L}_{p,\infty})_0.$ By the noncommutative Yosida-Hewitt theorem (see \cite{Peter}), we have that $\gamma$ extends to a normal functional on $\mathcal{L}_{p,\infty}.$ Let $\mathcal{L}_{q,1}$ be the Lorentz space which is the K\"othe dual\footnote{See \cite{Peter} for the definition and basic properties of K\"othe duals.} of $\mathcal{L}_{p,\infty}.$ There exists $x\in\mathcal{L}_{q,1}$ such that
$$\gamma(y)={\rm Tr}(xy),\quad y\in\mathcal{L}_{p,\infty}.$$
Fix $n\in\mathbb{N}$ and choose a finite rank operator $x_n$ such that $\|x-x_n\|_{q,1}<\frac1n.$ By assumption, the scalar valued function
$$f_n:s\to {\rm Tr}(x_nZ(s)),\quad s\in\mathbb{R},$$
is measurable. On the other hand, we have
$$|f-f_n|(s)\leq\|Z(s)\|_{p,\infty}\|x-x_n\|_{q,1}$$
and, therefore,
$$\|f-f_n\|_{\infty}\leq\frac1n\sup_{s\in\mathbb{R}}\|Z(s)\|_{p,\infty}.$$
Hence, $f_n$ converges to $f$ uniformly. Since the limit of a sequence of measurable functions is measurable, the weak measurability of the mapping $s\to Z(s)$ follows.
\end{proof}

\begin{lemma}\label{second measurability lemma} Let $s\to Z(s)$ be a bounded function from $\mathbb{R}$ to $(\mathcal{L}_{p,\infty})_0$ which is measurable in the weak operator topology. If
\begin{equation}\label{integrability lpinfty}
\int_{\mathbb{R}}\|Z(s)\|_{p,\infty}ds<\infty,
\end{equation}
then $s\to Z(s)$ is a Bochner integrable function from $\mathbb{R}$ to $(\mathcal{L}_{p,\infty})_0.$ We have
\begin{equation}\label{integral lpinfty}
\int_{\mathbb{R}}Z(s)ds\in(\mathcal{L}_{p,\infty})_0.
\end{equation}
\end{lemma}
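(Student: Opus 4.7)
The plan is to upgrade the weak-operator-topology measurability of $s\mapsto Z(s)$ to strong measurability in the Banach space $(\mathcal{L}_{p,\infty})_0$ (equipped, for $p>1$, with the equivalent norm recorded in Subsection \ref{pinfinity}), and then apply the Bochner integrability criterion \eqref{bochner integrability} verbatim. The desired conclusion \eqref{integral lpinfty} will then follow automatically because the Bochner integral, when taken in a Banach space $X$, always lies in $X$.

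First I would invoke Lemma \ref{first measurability lemma} to conclude that $s\mapsto Z(s)$ is weakly measurable in $(\mathcal{L}_{p,\infty})_0$ in the sense of Definition \ref{bweak}(b); that is, $s\mapsto \gamma(Z(s))$ is measurable for every bounded linear functional $\gamma$ on $(\mathcal{L}_{p,\infty})_0$. Second, I would observe that $(\mathcal{L}_{p,\infty})_0$ is separable: by definition it is the closure of the finite-rank operators in the quasi-norm $\|\cdot\|_{p,\infty}$, and the finite-rank operators admit a countable dense subset (take rational-coefficient combinations of rank-one operators $|e_i\rangle\langle e_j|$ built from a fixed orthonormal basis). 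Therefore the Pettis Measurability Theorem (Theorem 3.5.3 in \cite{hf}, quoted in the preliminaries) applies, and $s\mapsto Z(s)$ is strongly measurable in $(\mathcal{L}_{p,\infty})_0$ in the sense of Definition \ref{bweak}(a).

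With strong measurability in hand, hypothesis \eqref{integrability lpinfty} is exactly the Bochner integrability condition \eqref{bochner integrability} applied to the Banach space $X=(\mathcal{L}_{p,\infty})_0$. Hence there is a sequence $\{Z_n\}_{n\geq 0}$ of simple $(\mathcal{L}_{p,\infty})_0$-valued functions with $\int_{\mathbb{R}}\|Z(s)-Z_n(s)\|_{p,\infty}\,ds\to 0$, and the Bochner integral $\int_{\mathbb{R}} Z(s)\,ds=\lim_{n\to\infty}\int_{\mathbb{R}} Z_n(s)\,ds$ is well defined. Each $\int_{\mathbb{R}} Z_n(s)\,ds$ is a finite linear combination of elements of $(\mathcal{L}_{p,\infty})_0$ and therefore lies in $(\mathcal{L}_{p,\infty})_0$; since $(\mathcal{L}_{p,\infty})_0$ is closed in $\mathcal{L}_{p,\infty}$, the limit $\int_{\mathbb{R}} Z(s)\,ds$ also belongs to $(\mathcal{L}_{p,\infty})_0$, which is precisely \eqref{integral lpinfty}.

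The only subtle point in the above outline is the application of Pettis's theorem: one must verify that weak measurability of $s\mapsto Z(s)$ with respect to the functionals on the separable Banach space $(\mathcal{L}_{p,\infty})_0$ — rather than on all of $\mathcal{L}_{p,\infty}$ — is what is produced by Lemma \ref{first measurability lemma}, and that the codomain $(\mathcal{L}_{p,\infty})_0$ is genuinely separable so that Pettis applies. Both points are straightforward given the setup, and once they are dispatched the proof reduces to the standard Bochner construction recalled in the preliminaries. As a side remark, the Bochner integral produced here automatically coincides with the weak integral of Subsection \ref{weak int}, because both are characterised by the identity $\langle (\int Z(s)\,ds)\xi,\eta\rangle=\int\langle Z(s)\xi,\eta\rangle\,ds$ for all $\xi,\eta\in H$.
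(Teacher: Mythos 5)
Your argument is correct and coincides, step for step, with the proof in the paper: apply Lemma \ref{first measurability lemma} to get weak measurability in $(\mathcal{L}_{p,\infty})_0$, use separability of $(\mathcal{L}_{p,\infty})_0$ together with the Pettis Measurability Theorem (Theorem 3.5.3 in \cite{hf}) to upgrade to strong measurability, invoke the Bochner integrability criterion via \eqref{integrability lpinfty}, and conclude \eqref{integral lpinfty} directly from the definition of the Bochner integral. The extra details you supply (a countable dense set of finite-rank operators, closedness of $(\mathcal{L}_{p,\infty})_0$) are consistent with, and merely make explicit, what the paper leaves implicit.
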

\begin{proof} By Lemma \ref{first measurability lemma} the mapping $s\to Z(s)$ is weakly measurable from $\mathbb{R}$ to $(\mathcal{L}_{p,\infty})_0.$ Since $(\mathcal{L}_{p,\infty})_0$ is separable, it follows from Theorem 3.5.3 in \cite{hf} that the mapping $s\to Z(s)$ is strongly measurable from $\mathbb{R}$ to $(\mathcal{L}_{p,\infty})_0$ (in the sense of Definition 3.5.4 in \cite{hf}). Using Theorem 3.7.4 in \cite{hf} and \eqref{integrability lpinfty}, we obtain that the mapping $s\to Z(s)$ is Bochner integrable from $\mathbb{R}$ to $(\mathcal{L}_{p,\infty})_0.$ The inclusion \eqref{integral lpinfty} follows now from the definition of Bochner integral (see Definition 3.7.3 in \cite{hf}).
\end{proof}

In what follows, we use the notation $A^{z}$ for the complex power of a positive operator $A\in\mathcal{L}(H)$ defined as follows for $z\in \mathbb{C}$ of positive real part: $\Re(z)\geq 0$. Let $f_z:[0,\infty)\to\mathbb{C}$ be the Borel function given by the formula
$$
f_z(x)=
\begin{cases}
e^{z\log(x)},\quad x>0\\
0,\quad x=0.
\end{cases}
$$
We set $A^{z}=f_z(A),$ where the right hand side is defined by means of the functional calculus. In particular this defines the imaginary power $A^{is}=f_{is}(A)$ for $s\in\mathbb{R}$. One has $A^{z+z'}=A^{z}A^{z'}$ for  $z,z'\in \mathbb{C}$ of positive real part $\Re(z)\geq 0$, $\Re(z')\geq 0$.

One has $f_z(xy)=f_z(x)f_z(y)$ for $\Re(z)\geq 0$ and $x,y\geq 0$. Thus using the convention $0^{z}=0$ for $z\in \mathbb{C}$,  $\Re(z)\geq 0$, (in particular $0^{is}=0$) one has the formula
$$\lambda^{is}A^{is}=(\lambda A)^{is},\quad s\in\mathbb{R},\quad \lambda\geq0,\quad 0\leq A\in\mathcal{L}(H),$$
which is used repeatedly in Lemmas \ref{first integral formula} and \ref{second integral formula}.

\begin{lemma}\label{third measurability lemma} Let $A_1,A_2,A_3\in\mathcal{L}(H)$ be positive and let $X_1,X_2,X_3,X_4\in\mathcal{L}(H).$ The mapping
$$s\to X_1A_1^{is}X_2A_2^{is}X_3A_3^{is}X_4,\quad s\in\mathbb{R},$$
is measurable in the weak operator topology.
\end{lemma}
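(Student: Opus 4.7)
The strategy is to upgrade the conclusion from weak measurability to strong (even norm) continuity, since the latter is a much stronger statement and is easy to verify via the spectral theorem. Concretely, I will show that the mapping $s\mapsto A^{is}$ is strongly continuous for every positive $A\in\mathcal{L}(H)$, deduce that the triple product $s\mapsto X_1A_1^{is}X_2A_2^{is}X_3A_3^{is}X_4$ is strongly continuous, and then note that strong continuity implies weak continuity, and in particular weak measurability in the sense required by the definition preceding Lemma \ref{third measurability lemma}.

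\textbf{Step 1: Strong continuity of $s\mapsto A^{is}$.} Fix a positive $A\in\mathcal{L}(H)$ and $\xi\in H$. With the convention $0^{is}=0$, the Borel function $f_{is}$ on $[0,\infty)$ satisfies $|f_{is}(\lambda)|\leq 1$ for all $\lambda\geq 0$, and for each $\lambda\geq 0$ the map $s\mapsto f_{is}(\lambda)$ is continuous (continuity at $\lambda=0$ is trivial from the convention; at $\lambda>0$ it is $s\mapsto e^{is\log\lambda}$). Let $s_n\to s_0$. By the spectral theorem,
\begin{equation*}
\|A^{is_n}\xi-A^{is_0}\xi\|^2=\int_{[0,\|A\|]}|f_{is_n}(\lambda)-f_{is_0}(\lambda)|^2\,d\langle E_A(\lambda)\xi,\xi\rangle.
\end{equation*}
The integrand converges pointwise to $0$ and is dominated by the constant $4$, which is integrable against the finite positive measure $d\langle E_A\xi,\xi\rangle$. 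Dominated convergence now gives $\|A^{is_n}\xi-A^{is_0}\xi\|\to 0$, proving that $s\mapsto A^{is}$ is strongly continuous.

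\textbf{Step 2: Strong continuity of the product, and conclusion.} Since $\|A_j^{is}\|_\infty\leq 1$ uniformly in $s$, the standard fact that the multiplication map $(T,S)\mapsto TS$ is jointly strongly continuous on norm-bounded sets implies that $s\mapsto X_1A_1^{is}X_2A_2^{is}X_3A_3^{is}X_4$ is strongly continuous. In particular, for every $\xi,\eta\in H$ the scalar function
\begin{equation*}
s\mapsto\langle X_1A_1^{is}X_2A_2^{is}X_3A_3^{is}X_4\xi,\eta\rangle
\end{equation*}
is continuous, hence Borel measurable. By the definition of weak-operator measurability given in Subsection \ref{weak int}, this is exactly what is required.

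\textbf{Expected difficulty.} There is essentially no obstacle beyond handling the convention $0^{is}=0$ cleanly in Step~1; once $s\mapsto A^{is}$ is known to be strongly continuous, the rest of the argument is a routine application of strong continuity of operator multiplication on bounded sets.
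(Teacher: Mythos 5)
Your proof is correct and follows essentially the same route as the paper: both reduce the lemma to the strong continuity of $s\mapsto A^{is}$ and then observe that products with fixed bounded operators preserve strong (hence weak) continuity. The only cosmetic difference is that the paper derives the strong continuity of $s\mapsto A^{is}$ by writing $A^{is}=\exp(is\log_{\rm fin}(A))\cdot E_A(0,\infty)$ and invoking Stone's theorem, whereas you obtain it directly from the spectral theorem and dominated convergence; these are interchangeable.
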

\begin{proof} For every bounded positive operator $A,$ the mapping $s\to A^{is}$ is strongly continuous. Indeed, let $\log_{{\rm fin}}$ be a Borel function on $[0,\infty)$ defined by the formula
$$\log_{{\rm fin}}(x)=
\begin{cases}
\log(x),\quad x>0\\
0,\quad x=0
\end{cases}.
$$
We have that $\log_{{\rm fin}}(A)$ is an unbounded self-adjoint operator. Thus, the mapping
$$s\to A^{is}\stackrel{}{=}\exp(is\log_{\rm fin}(A))\cdot E_A(0,\infty)$$
is strongly continuous by Stone's theorem.

Thus, for arbitrary vectors $\xi,\eta\in H,$ the mapping
$$s\to\langle X_1A_1^{is}X_2A_2^{is}X_3A_3^{is}X_4\xi,\eta\rangle,\quad s\in\mathbb{R},$$
is continuous. In particular, the latter scalar-valued mapping is measurable and our vector-valued mapping is measurable in the weak operator topology.
\end{proof}

\section{Proof of the key \lq\lq commutator\rq\rq estimate}

This section contains a modification of Lemma 11 stated on p. 321 in \cite{Connes}. The proofs here were obtained with the help of Denis Potapov.

In this section, integrals are understood in the weak sense (see Subsection \ref{weak int}) unless explicitly specified otherwise.

\begin{lemma}\label{first integral formula} For every $p>1,$ there exists a Schwartz function $h$ such that, for every $0\leq X,Y\in\mathcal{L}(H)$ we have
$$X^p-Y^p=V-\int_{\mathbb{R}}X^{is}VY^{-is}h(s)ds.$$
Here, $V=X^{p-1}(X-Y)+(X-Y)Y^{p-1}.$
\end{lemma}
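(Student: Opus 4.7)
The plan is to reduce the operator identity to a scalar functional identity and then promote it back to operators via the Double Operator Integral (DOI) calculus of Subsection~\ref{doi}. First, on commuting scalars, factor
\[
V(\lambda,\mu)=\lambda^{p-1}(\lambda-\mu)+(\lambda-\mu)\mu^{p-1}=(\lambda^{p-1}+\mu^{p-1})(\lambda-\mu).
\]
Requiring the scalar version of the claimed identity, namely
\[
\lambda^{p}-\mu^{p}=V(\lambda,\mu)\Bigl(1-\int_{\mathbb{R}}(\lambda/\mu)^{is}h(s)\,ds\Bigr),
\]
and writing $\widetilde{h}(u):=\int_{\mathbb{R}}e^{ius}h(s)\,ds$, one is forced to pick $\widetilde{h}(u)=g(u)$, where
\[
g(u):=\frac{e^{u}-e^{u(p-1)}}{(e^{u}-1)(e^{u(p-1)}+1)}.
\]

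Next I would verify that $g\in\mathcal{S}(\mathbb{R})$, so that $h:=\mathcal{F}^{-1}(g)$ is also Schwartz. On the real axis the denominator vanishes only at $u=0$, where $e^{u}-1$ has a simple zero; the numerator has a simple zero there as well (with derivative $2-p$), so $g$ is real-analytic on all of $\mathbb{R}$ with $g(0)=(2-p)/2$. As $u\to+\infty$, $g(u)\sim -e^{-u}$; as $u\to-\infty$, the denominator tends to $-1$ while the numerator is $O(e^{\min(1,p-1)u})$. Since $g$ extends holomorphically to a horizontal strip around $\mathbb{R}$, Cauchy's formula promotes this exponential decay to every derivative, so indeed $g\in\mathcal{S}(\mathbb{R})$.

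For the operator identity, I would express both sides as DOIs applied to $X-Y$. The divided-difference identity gives $X^{p}-Y^{p}=T^{X,Y}_{(\lambda^{p}-\mu^{p})/(\lambda-\mu)}(X-Y)$, while $V=T^{X,Y}_{\lambda^{p-1}+\mu^{p-1}}(X-Y)$ by direct inspection. Using \eqref{123} one writes
\[
\int_{\mathbb{R}}X^{is}AY^{-is}h(s)\,ds=T^{X,Y}_{g(\log(\lambda/\mu))}(A),
\]
and then the composition rule \eqref{doi algebraic} yields
\[
\int_{\mathbb{R}}X^{is}VY^{-is}h(s)\,ds=T^{X,Y}_{g(\log(\lambda/\mu))(\lambda^{p-1}+\mu^{p-1})}(X-Y).
\]
Subtracting this from $V$ and invoking the scalar identity $(1-g(\log(\lambda/\mu)))(\lambda^{p-1}+\mu^{p-1})=(\lambda^{p}-\mu^{p})/(\lambda-\mu)$ (which is precisely the rearrangement that defined $g$) produces $X^{p}-Y^{p}$, as required.

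The principal obstacle will be Step~2: showing that $g$ is Schwartz, which requires carefully resolving the $0/0$ cancellation at the origin for $g$ and all its derivatives, and tracking the asymptotic regimes $p\ge 2$ versus $1<p<2$ that govern the exponential decay rate at $-\infty$. A secondary care point is the convention $0^{is}=0$ stated immediately before the lemma; it must be maintained consistently so that when $0\in\mathrm{spec}(X)\cup\mathrm{spec}(Y)$ the products $X^{is}X^{p-1}=X^{p-1+is}$ and $Y^{p-1}Y^{-is}=Y^{p-1-is}$ used implicitly in the DOI manipulation remain well defined, and so that the weak integral of Subsection~\ref{weak int} converges with bound $\|h\|_{L^1}\|V\|_{\infty}$.
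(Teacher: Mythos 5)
Your proposal is essentially the paper's proof: you arrive at the same function $g$ (the paper writes it in the equivalent symmetrized form $g(t)=1-\frac{e^{pt/2}-e^{-pt/2}}{(e^{t/2}-e^{-t/2})(e^{(p-1)t/2}+e^{-(p-1)t/2})}$, which makes evenness transparent), the same Fourier transform $h$, and the same DOI-composition argument. The paper applies the DOIs to the identity operator with the factor $(\lambda-\mu)$ folded into the polynomial symbols $\phi_2,\phi_3$, thereby avoiding any explicit appeal to the divided difference; your variant, applying the DOI to $X-Y$ with symbol $(\lambda^p-\mu^p)/(\lambda-\mu)$, is equivalent once one notices that this symbol equals $\bigl(1-g(\log(\lambda/\mu))\bigr)(\lambda^{p-1}+\mu^{p-1})$ and is therefore itself in the integral projective tensor product --- but that is a (harmless) extra thing to justify which the paper's formulation sidesteps. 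One small slip to correct: the asymptotic $g(u)\sim -e^{-u}$ as $u\to+\infty$ holds only for $p>2$; for $1<p<2$ one has $g(u)\sim e^{(1-p)u}$, and since $g$ is even the two ends of the real line behave identically, so there is no asymmetric regime at $-\infty$ to track. Both cases still give exponential decay, so the Schwartz conclusion is unaffected; your Cauchy-on-a-strip device for pushing the decay to all derivatives is a legitimate alternative to the paper's direct inspection. Finally, your observation about the $0^{is}=0$ convention is exactly the point the paper addresses when verifying that $\phi_1(\lambda,\mu)=\int h(s)\lambda^{is}\mu^{-is}\,ds$ and $\phi_3=\phi_1\phi_2$ both hold on the boundary $\{\lambda=0\}\cup\{\mu=0\}$, so do carry that check through.
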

\begin{proof} Define a  function $g$ by setting
$$g(t)=1-\frac{e^{\frac{p}{2}t}-e^{-\frac{p}{2}t}}{(e^\frac{t}{2}-e^{-\frac{t}{2}})(e^{(\frac{p-1}{2})t}+e^{-(\frac{p-1}{2})t})},\quad t\in\mathbb{R}, t\neq 0, \ \ g(0)=\left(1-\frac{p}{2}\right).$$
It is an even function of $t$, it is smooth at $t=0$ with Taylor expansion 
$$g(t)=\left(1-\frac{p}{2}\right)+\frac{1}{24} \left(p^3-3 p^2+2 p\right) t^2+\cdots$$
and one has 
$$
g(t)=\frac{e^{2 t}-e^{p t}}{\left(e^t-1\right) \left(e^{p t}+e^t\right)}
$$
so that $g=0$ for $p=2$, and  $g(t)$ is equivalent to $e^{(1-p)t}$ when $t\to \infty$ for $p<2$, and to $-e^{-t}$ for $p>2$. Similarly all derivatives of $g$ have exponential decay at $\infty$. Thus $g$ is a Schwartz function. 
Set $h$ to be the Fourier transform of $g,$ so that $h$ is also a Schwartz function. Set
$$\phi_1(\lambda,\mu)=g(\log(\frac{\lambda}{\mu})) \quad\forall  \lambda,\mu>0,
\ \phi_1(0,\mu)=0, \ \forall \mu\geq 0, \ \phi_1(\lambda,0)=0, \forall \lambda\geq 0.$$
So that our function $\phi_1$ is defined on $[0,\infty)\times[0,\infty).$ Note that it is not continuous at $(0,0)$.
One has 
\begin{equation}\label{phi1 alg}\phi_1(\lambda,\mu)=1-\frac{\lambda^p-\mu^p}{(\lambda-\mu)(\lambda^{p-1}+\mu^{p-1})},\quad \lambda,\mu>0, \lambda\neq \mu.\end{equation}
We claim that
\begin{equation}\label{phi1 rep}
\phi_1(\lambda,\mu)=\int_{\mathbb{R}}h(s)\lambda^{is}\mu^{-is}ds,\quad\lambda,\mu\geq0.
\end{equation}
Indeed, we have
$$g(t)=\int_{\mathbb{R}}h(s)e^{ist}ds,\quad t\in\mathbb{R}.$$
For $\lambda,\mu>0,$ we set $t=\log(\frac{\lambda}{\mu})$ and obtain
$$\phi_1(\lambda,\mu)=\int_{\mathbb{R}}h(s)e^{is\log(\frac{\lambda}{\mu})}ds=\int_{\mathbb{R}}h(s)\lambda^{is}\mu^{-is}ds$$
For $\lambda=0$ or $\mu=0,$ the left hand side of \eqref{phi1 rep} vanishes by the definition of $\phi_1,$ while the right hand side vanishes due to the convention $0^{is}=0.$ Thus, formula \eqref{phi1 rep} holds for all $\lambda,\mu\geq0.$ 
Set
$$\phi_2(\lambda,\mu)=(\lambda^{p-1}+\mu^{p-1})(\lambda-\mu),\quad\lambda,\mu\geq0.$$
This function is bounded on ${\rm Spec}(X)\times {\rm Spec}(Y)$ and the same holds for 
$$\phi_3(\lambda,\mu)=(\lambda^{p-1}+\mu^{p-1})(\lambda-\mu)-(\lambda^p-\mu^p),\quad \forall \lambda,\mu \geq 0.$$
The equality $\phi_3=\phi_1\phi_2$ holds on $[0,\infty)\times [0,\infty)$. Indeed this follows from \eqref{phi1 alg} for $\lambda,\mu>0$, $\lambda\neq \mu$. For $\lambda=\mu>0$ one has $\phi_1(\lambda,\lambda)=1-\frac{p}{2}$,  $\phi_2(\lambda,\lambda)=0$ and $\phi_3(\lambda,\lambda)=0$. If $\lambda=0$ or $\mu=0$ one has $\phi_1(\lambda,\mu)=0$ and $\phi_3(\lambda,\mu)=0$.

It follows from the definition \eqref{123} of Double Operator Integrals and $X,Y\geq 0$,
\begin{equation}\label{doi usual}
T_{\phi_1}^{X,Y}(A)=\int_{\mathbb{R}}h(s)X^{is}AY^{-is}ds.
\end{equation}
Indeed, since $h$ is a Schwartz function, the condition \eqref{119} holds and, therefore, \eqref{123} reads as \eqref{doi usual}. Here, the integral on the right hand side is understood in the weak sense. Measurability of the integrand is guaranteed by Lemma \ref{third measurability lemma} and condition \eqref{necessary-condition} follows from the inequality
$$\|h(s)X^{is}AY^{-is}\|_{\infty}\leq|h(s)|\cdot\|A\|_{\infty},\quad s\in\mathbb{R},$$
and from the fact that $h$ is a Schwartz (and, hence, integrable) function. In particular, $T^{X,Y}_{\phi_1}:\mathcal{L}_{\infty}\to\mathcal{L}_{\infty}.$

Using formulae \eqref{integral tensor product} and \eqref{123}, we obtain that $T^{X,Y}_{\phi_2}:\mathcal{L}_{\infty}\to\mathcal{L}_{\infty}$ and
$$T^{X,Y}_{\phi_2}(A)=X^pA-X^{p-1}AY+XAY^{p-1}-AY^p.$$
The function $\phi_3$ bounded on ${\rm Spec}(X)\times {\rm Spec}(Y)$, $T^{X,Y}_{\phi_3}:\mathcal{L}_{\infty}\to\mathcal{L}_{\infty}$ and
$$T^{X,Y}_{\phi_3}(A)=(X^pA-X^{p-1}AY+XAY^{p-1}-AY^p)-(X^pA-AY^p).$$

We have $\phi_3=\phi_1\phi_2$ on ${\rm Spec}(X)\times {\rm Spec}(Y)$, and thus
$$T^{X,Y}_{\phi_1}(V)\stackrel{\eqref{doi algebraic}}{=}T^{X,Y}_{\phi_1}(T^{X,Y}_{\phi_2}(1))=T^{X,Y}_{\phi_3}(1)=V-(X^p-Y^p).$$
The assertion follows now from \eqref{doi usual}.
\end{proof}

Lemma \ref{second integral formula} below can be proved without any compactness assumption on the operator $B$; however, the proof becomes much harder. We impose compactness assumption due to the fact that $B$ is compact in Lemma \ref{main lemma} (the only place where we use Lemma \ref{second integral formula}).

\begin{lemma}\label{second integral formula} Let $0\leq A,B\in\mathcal{L}(H).$ If $1<p<\infty$ and if $B$ is compact, then
$$B^pA^p-(A^{\frac12}BA^{\frac12})^p="T(0)"-\int_{\mathbb{R}}T(s)h(s)ds,$$
where we denote, for brevity, $Y=A^{\frac12}BA^{\frac12}$ while
$$T(s)=B^{p-1+is}[B,A^{p+is}]Y^{-is}+B^{p-1+is}A^{p-\frac12+is}[A^{\frac12},B]Y^{-is}+$$
$$+B^{is}[B,A^{1+is}]Y^{p-1-is}+B^{is}A^{\frac12+is}[A^{\frac12},B]Y^{p-1-is}.$$
and 
$$"T(0)":=B^{p-1}[B,A^p]+B^{p-1}A^{p-\frac12}[A^{\frac12},B]+[B,A]Y^{p-1}+A^{\frac12}[A^{\frac12},B]Y^{p-1}.
 $$
\end{lemma}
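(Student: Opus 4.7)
My plan is to prove this by extending the DOI scheme of Lemma~\ref{first integral formula} to the present three-operator setting, in three steps.

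First, I would rewrite $T(s)$ in a more compact form. Using $[A^{1/2}, B] = -[B, A^{1/2}]$ together with the Leibniz identity
\[
[B, A^\beta]\, A^{1/2} = [B, A^{\beta + 1/2}] + A^\beta [A^{1/2}, B],
\]
applied with $\beta = p - 1/2 + is$ and $\beta = 1/2 + is$, the four summands defining $T(s)$ pair up into
\[
T(s) = B^{p-1+is}\, [B, A^{p-1/2+is}]\, A^{1/2}\, Y^{-is} + B^{is}\, [B, A^{1/2+is}]\, A^{1/2}\, Y^{p-1-is}.
\]

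Second, expanding every commutator in $"T(0)"$ and telescoping yields
\[
"T(0)" = (B^p A^p - Y^p) + R, \qquad R := BA\, Y^{p-1} - B^{p-1} A^{p-1/2} B A^{1/2}.
\]
Since $B^{p-1} A^{p-1/2} B A^{1/2} = B^{p-1} A^{p-1}\cdot(A^{1/2} B A^{1/2}) = B^{p-1} A^{p-1} Y$, we may equivalently write $R = BA\, Y^{p-1} - B^{p-1} A^{p-1} Y$. Consequently the conclusion of the lemma is equivalent to the identity
\[
\int_{\mathbb R} h(s)\, T(s)\, ds \;=\; R.
\]

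Third, to prove this identity I would invoke the DOI multiplicativity \eqref{doi algebraic} for the positive pair $(B, Y)$. Exactly as in the proof of Lemma~\ref{first integral formula}, the relation $\phi_1 \phi_2 = \phi_3$ combined with \eqref{doi algebraic} yields, for every bounded $W$,
\[
\int h(s)\, B^{is}\, V_W\, Y^{-is}\, ds = B W Y^{p-1} - B^{p-1} W Y,
\]
where $V_W := B^{p-1}(BW - WY) + (BW - WY) Y^{p-1}$. To match $T(s)$ against this shape I then apply the Leibniz rule $[B, A^{\alpha + is}] = [B, A^\alpha]\, A^{is} + A^\alpha\, [B, A^{is}]$ at $\alpha \in \{p - 1/2,\, 1/2\}$. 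The pieces carrying a plain $A^{is}$ regroup into $(B, Y)$-DOIs of the form above (with $W$ an appropriate power of $A$), while the pieces carrying $[B, A^{is}]$ become $(B, A)$-DOIs whose contributions from the two halves of $T(s)$ cancel pairwise thanks to the symmetric appearance of the weights $B^{p-1}$ (to the left of the first half) and $Y^{p-1}$ (to the right of the second half). Collecting everything and using the basic identity $BA - Y = [B, A^{1/2}]\, A^{1/2}$, the leftover is exactly $R$.

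The main obstacle is the $s$-dependence of the factor $A^{\alpha + is}$ inside the commutators in $T(s)$, which prevents one from recognising $\int h(s) T(s)\,ds$ directly as a single $(B, Y)$-DOI; the Leibniz splitting above is the device that resolves this by isolating the purely $A$-dependent contribution. The compactness of $B$ (and hence of $Y = A^{1/2} B A^{1/2}$) is used to ensure that all the weak integrals converge in the operator-norm topology and that the DOI framework of Subsection~\ref{doi} applies throughout.
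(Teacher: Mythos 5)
Your Steps 1 and 2 are correct and do genuinely simplify the target: the Leibniz identity $[B,A^{\beta}]A^{1/2}=[B,A^{\beta+1/2}]+A^{\beta}[A^{1/2},B]$ collapses the four terms to
\[
T(s)=B^{p-1+is}[B,A^{p-\frac12+is}]A^{\frac12}Y^{-is}+B^{is}[B,A^{\frac12+is}]A^{\frac12}Y^{p-1-is},
\]
and, since $"T(0)"=(B^pA^p-Y^p)+R$ with $R=BAY^{p-1}-B^{p-1}A^{p-1}Y$, the lemma is equivalent to $\int_{\mathbb R}h(s)T(s)\,ds=R$. This reformulation is a nice observation that is not in the paper's proof.

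Step 3, however, has a genuine gap. After the Leibniz split $[B,A^{\alpha+is}]=[B,A^{\alpha}]A^{is}+A^{\alpha}[B,A^{is}]$, the pieces ``carrying a plain $A^{is}$'' are
\[
B^{p-1+is}[B,A^{p-\frac12}]A^{is}A^{\frac12}Y^{-is}
\quad\text{and}\quad
B^{is}[B,A^{\frac12}]A^{is}A^{\frac12}Y^{p-1-is}.
\]
These are \emph{not} of the form $B^{is}W Y^{-is}$ with an $s$-independent $W$: the factor $A^{is}$ sits in the middle and, since $A$ commutes with neither $B$ nor $Y$, it cannot be moved to the left to merge with $B^{is}$ nor to the right to merge with $Y^{-is}$. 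Hence integrating against $h(s)$ does not produce a $(B,Y)$-double operator integral, and the formula $\int h(s)B^{is}V_W Y^{-is}ds=BWY^{p-1}-B^{p-1}WY$ is not applicable. Similarly the pieces carrying $[B,A^{is}]$ involve the three operators $A,B,Y$ simultaneously in a way that does not assemble into a $(B,A)$-DOI, and no pairwise cancellation between the two halves of $T(s)$ is visible --- the first half carries a left weight $B^{p-1}$ and a right factor $Y^{-is}$ while the second carries a left factor $B^{is}$ and a right weight $Y^{p-1-is}$, so they do not have matching shapes. The underlying obstruction is that $T(s)$ is intrinsically a \emph{triple} operator expression (imaginary powers of $B$, $A$ and $Y$ occur in three separate slots); a single DOI in any of the pairs $(B,Y)$, $(B,A)$, $(A,Y)$ cannot encode it.

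This is exactly where the paper uses the compactness of $B$, and in a way quite different from what you describe: compactness is invoked not for convergence of the weak integrals (which holds regardless) but to diagonalize $B=\sum_j\lambda_j p_j$ and thereby \emph{scalarize} the middle operator. One then writes $B^pA^p-Y^p=\sum_j p_j\bigl((\lambda_jA)^p-Y^p\bigr)$ and applies Lemma~\ref{first integral formula} to the positive pair $(\lambda_j A,Y)$ for each $j$; the imaginary power $(\lambda_j A)^{is}=\lambda_j^{is}A^{is}$ then absorbs into $B^{is}A^{is}$ upon resumming over $j$, which is precisely why the mixed factor $B^{is}A^{is}$ can appear in $T(s)$ without there being any genuine $(B,A)$-DOI. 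If you want to salvage your algebraic route you would need a triple-operator-integral analogue of the multiplicativity \eqref{doi algebraic}, which the paper avoids by the spectral decomposition of $B$.
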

\begin{proof} By assumption, $B$ is compact and, therefore, one can write $B=\sum_j\lambda_jp_j,$ where $\{p_j\}$ is a family of mutually orthogonal projections such that $\sum_jp_j=1.$ We have
$$B^pA^p-Y^p=\sum_jp_j(B^pA^p-Y^p)=\sum_jp_j((\lambda_jA)^p-Y^p).$$
Applying Lemma \ref{first integral formula} to the expression in the brackets, we obtain\footnote{In this and subsequent formulae, imaginary powers are defined as in Section \ref{measurability section}. The convention $0^{is}=0$ is used.}
\begin{equation}\label{116}
B^pA^p-Y^p=\sum_jp_j(V_j-\int_{\mathbb{R}}(\lambda_jA)^{is}V_jY^{-is}h(s)ds),
\end{equation}
where,
$$V_j=(\lambda_jA)^{p-1}(\lambda_jA-Y)+(\lambda_jA-Y)Y^{p-1}=(\lambda_jA)^p-(\lambda_jA)^{p-1}Y+\lambda_jAY^{p-1}-Y^p.$$
Therefore, we get $\sum_jp_jV_j=B^pA^p-B^{p-1}A^{p-1}Y+BAY^{p-1}-Y^p="T(0)"$. Moreover we have
$$\sum_jp_j(\lambda_jA)^{is}V_j=\sum_jp_j\Big((\lambda_jA)^{p+is}-(\lambda_jA)^{p-1+is}Y+(\lambda_jA)^{1+is}Y^{p-1}-(\lambda_jA)^{is}Y^p\Big)=$$
$$=\sum_jp_j\lambda_j^{p+is}\cdot A^{p+is}-\sum_jp_j\lambda_j^{p-1+is}\cdot A^{p-1+is}Y+$$
$$+\sum_jp_j\lambda_j^{1+is}\cdot A^{1+is}Y^{p-1}-\sum_jp_j\lambda_j^{is}\cdot A^{is}Y^p.$$
By the functional calculus, we have
$$\sum_jp_j(\lambda_jA)^{is}V_j=B^{p+is}A^{p+is}-B^{p-1+is}A^{p-1+is}Y+B^{1+is}A^{1+is}Y^{p-1}-B^{is}A^{is}Y^p=$$
$$=B^{p-1+is}(BA^{p+is}-A^{p-1+is}Y)+B^{is}(BA^{1+is}-A^{is}Y)Y^{p-1}=$$
$$=B^{p-1+is}[B,A^{p+is}]+B^{p-1+is}A^{p-1+is}(AB-Y)+$$
$$+B^{is}[B,A^{1+is}]Y^{p-1}+B^{is}A^{is}(AB-Y)Y^{p-1}=$$
$$=B^{p-1+is}[B,A^{p+is}]+B^{p-1+is}A^{p-1+is}A^{\frac12}[A^{\frac12},B]+$$
$$+B^{is}[B,A^{1+is}]Y^{p-1}+B^{is}A^{is}A^{\frac12}[A^{\frac12},B]Y^{p-1}.$$
Substituting the last equality into \eqref{116} completes the proof.
\end{proof}

The following lemma is the main result of this section. It provides the key estimate used in the proof of Theorem \ref{main theorem} \eqref{maintb}. In \cite{Connes}, the corresponding Lemma 3$.\beta.$11 is stated without a proof.

\begin{lemma}\label{main lemma} Let $0\leq A\in\mathcal{L}_{\infty}$ and let $0\leq B\in\mathcal{L}_{p,\infty},$ $1<p<\infty.$ If  $[A^{\frac12},B]\in(\mathcal{L}_{p,\infty})_0,$ then
$$B^pA^p-(A^{\frac12}BA^{\frac12})^p\in(\mathcal{L}_{1,\infty})_0.$$
\end{lemma}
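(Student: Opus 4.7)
Since $B\in\mathcal{L}_{p,\infty}$ is compact, Lemma~\ref{second integral formula} applies with the given $A$, $B$, yielding
\[
B^{p}A^{p}-(A^{1/2}BA^{1/2})^{p}=\text{``}T(0)\text{''}-\int_{\mathbb{R}}T(s)\,h(s)\,ds,
\]
with $Y=A^{1/2}BA^{1/2}$ and $h$ a Schwartz function. The plan is to show (i) each of the four summands making up $\text{``}T(0)\text{''}$ lies in $(\mathcal{L}_{1,\infty})_{0}$, and (ii) the integrand $T(s)h(s)$ is weakly integrable into $(\mathcal{L}_{1,\infty})_{0}$, so that Lemma~\ref{second measurability lemma} places the integral itself in $(\mathcal{L}_{1,\infty})_{0}$.

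Two summands of $\text{``}T(0)\text{''}$ contain $[A^{1/2},B]$ directly and are dispatched by H\"older \eqref{lpi mult}: we have $B^{p-1}\in\mathcal{L}_{p/(p-1),\infty}$; the non-zero eigenvalues of $Y$ agree with those of $B^{1/2}AB^{1/2}\leq\|A\|B$, so $Y\in\mathcal{L}_{p,\infty}$ and $Y^{p-1}\in\mathcal{L}_{p/(p-1),\infty}$; combined with $[A^{1/2},B]\in(\mathcal{L}_{p,\infty})_{0}$ and the bounded factors $A^{1/2}, A^{p-1/2}$, the two products land in $(\mathcal{L}_{1,\infty})_{0}$ (multiplication by a bounded operator preserves the separable part). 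The remaining two summands contain $[B,A]$ and $[B,A^{p}]$. The Leibniz identity $[B,A]=A^{1/2}[B,A^{1/2}]+[B,A^{1/2}]A^{1/2}$ immediately yields $[B,A]\in(\mathcal{L}_{p,\infty})_{0}$. For $[B,A^{p}]$ I would invoke the double operator integral formalism of Subsection~\ref{doi}, writing $[B,A^{p}]=T^{A,A}_{\psi_{p}}([B,A])$ with $\psi_{p}(\lambda,\mu)=(\lambda^{p}-\mu^{p})/(\lambda-\mu)$, a smooth bounded function on the compact set $\mathrm{Spec}(A)\times\mathrm{Spec}(A)$ which admits a representation of the form \eqref{integral tensor product}; then $T^{A,A}_{\psi_{p}}$ maps $(\mathcal{L}_{p,\infty})_{0}$ into itself.

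For (ii), each of the four summands of $T(s)$ has the same algebraic shape as its $\text{``}T(0)\text{''}$-counterpart, decorated by the unitaries $B^{is},A^{is},Y^{\pm is}$ of operator norm at most one. Two of them still only involve $[A^{1/2},B]$ and therefore have $\mathcal{L}_{1,\infty}$-norm bounded uniformly in $s$. The other two involve $[B,A^{1+is}]$ and $[B,A^{p+is}]$; for these I would again apply DOI, writing $[B,A^{q+is}]=T^{A,A}_{\psi_{q+is}}([B,A])$ with $\psi_{q+is}(\lambda,\mu)=(\lambda^{q+is}-\mu^{q+is})/(\lambda-\mu)$, and exhibit an integral representation of $\psi_{q+is}$ whose $L^{1}(\Omega,\kappa)$ norm \eqref{119} grows at most polynomially in $|s|$. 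Combined with the super-polynomial decay of $h$, this gives $\int_{\mathbb{R}}\|T(s)h(s)\|_{1,\infty}\,ds<\infty$. Weak-operator measurability of $s\mapsto T(s)$ is furnished by Lemma~\ref{third measurability lemma}, and Lemma~\ref{second measurability lemma} then places the weak integral in $(\mathcal{L}_{1,\infty})_{0}$, completing the proof.

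The main obstacle is precisely the polynomial-in-$|s|$ bound on the DOI norm of $\psi_{q+is}$ for $q\in\{1,p\}$: one must display a factorization of the form \eqref{integral tensor product}, e.g.\ combining the Hadamard identity $\psi_{q+is}(\lambda,\mu)=(q+is)\int_{0}^{1}(t\lambda+(1-t)\mu)^{q-1+is}dt$ with a Fourier or contour representation of $x\mapsto x^{q-1+is}$ on the bounded interval $\mathrm{Spec}(A)$, whose associated $L^{1}(\Omega,\kappa)$ bound is polynomial in $|s|$. Once this estimate is in place, the rest is routine H\"older and weak-integration bookkeeping.
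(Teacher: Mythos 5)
Your proposal correctly identifies the decomposition: start from Lemma~\ref{second integral formula}, treat the ``$T(0)$'' term and the integral separately, and aim to land everything in $(\mathcal{L}_{1,\infty})_0$. Your algebraic observations (e.g.\ that $Y=A^{1/2}BA^{1/2}$ has the same nonzero eigenvalues as $B^{1/2}AB^{1/2}\leq\|A\|_\infty B$, hence $Y\in\mathcal{L}_{p,\infty}$; that $(\mathcal{L}_{p,\infty})_0$ is an ideal; Leibniz for $[B,A]$) are all sound. But the plan has a genuine gap, and it is exactly the one you flag at the end.

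You propose to handle $[B,A^{q+is}]$, $q\in\{1,p\}$, via the double operator integral $T^{A,A}_{\psi_{q+is}}$ applied to $[B,A]$, and then to integrate the full $T(s)h(s)$ directly in $\mathcal{L}_{1,\infty}$, which forces you to produce an integral-projective-tensor-product representation of $\psi_{q+is}$ with $L^1$-norm growing polynomially in $|s|$. You never produce it, and calling it ``the main obstacle'' is accurate: this is not a routine estimate. (There is also a secondary point you would still need to justify, namely that $T^{A,A}_{\phi}$ maps $(\mathcal{L}_{p,\infty})_0$ into itself; this is believable by density of finite-rank operators but is not among the listed properties of DOI in Subsection~\ref{doi}.) In short, your proposal reduces the lemma to an unproved DOI growth bound.

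The paper's proof sidesteps all of this. First, it factors $B^{p-1+is}=B^{p-1}B^{is}$ and $Y^{p-1-is}=Y^{-is}Y^{p-1}$, pulling $B^{p-1}$ and $Y^{p-1}$ outside the integrals. That leaves four weak integrals $I,II,III,IV$ whose integrands are contractions of $\mathcal{L}_{p,\infty}$-elements, and the goal becomes to show each of $I,\dots,IV$ lies in $(\mathcal{L}_{p,\infty})_0$; multiplying afterwards by $B^{p-1},Y^{p-1}\in\mathcal{L}_{p/(p-1),\infty}$ lands the products in $(\mathcal{L}_{1,\infty})_0$ by the H\"older-type Corollary 2.3.16.b of \cite{LSZ}. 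Second, instead of DOI, the needed facts about $[B,A^{q+is}]$ are obtained elementarily. One may assume $0\leq A\leq 1$. Then $\|A^{q+is}\|_\infty\leq 1$ and the trivial commutator bound gives $\|[B,A^{q+is}]\|_{p,\infty}\leq 2\|B\|_{p,\infty}$ \emph{uniformly in} $s$ --- no polynomial growth is needed, and integrability against the Schwartz function $h$ is immediate. Membership $[B,A^{q+is}]\in(\mathcal{L}_{p,\infty})_0$ is obtained by uniformly approximating $x\mapsto x^{q+is}$ on $[0,1]$ by polynomials $f_m$: from $[B,A]\in(\mathcal{L}_{p,\infty})_0$ and the identity $[B,A^k]=\sum_{l=0}^{k-1}A^l[B,A]A^{k-1-l}$ one gets $[B,f_m(A)]\in(\mathcal{L}_{p,\infty})_0$, and $\|[B,A^{q+is}]-[B,f_m(A)]\|_{p,\infty}\leq 2\|B\|_{p,\infty}\|A^{q+is}-f_m(A)\|_\infty\to 0$. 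With measurability supplied by Lemma~\ref{third measurability lemma}, Lemma~\ref{second measurability lemma} then puts each $I,\dots,IV$ in $(\mathcal{L}_{p,\infty})_0$. The upshot: your decomposition is right, but the DOI route you outline is both harder and incomplete, and the elementary uniform $\mathcal{L}_{p,\infty}$ bound plus polynomial approximation is the intended way through.
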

\begin{proof}  Consider the formula for $B^pA^p-(A^{\frac12}BA^{\frac12})^p$ obtained in Lemma \ref{second integral formula}. We have
$$B^pA^p-(A^{\frac12}BA^{\frac12})^p="T(0)"-B^{p-1}\cdot(I+II)-(III+IV)\cdot Y^{p-1},$$
where
$$I=\int_{\mathbb{R}}B^{is}[B,A^{p+is}]Y^{-is}h(s)ds,\quad II=\int_{\mathbb{R}}B^{is}A^{p-\frac12+is}[A^{\frac12},B]Y^{-is}h(s)ds,$$
$$III=\int_{\mathbb{R}}B^{is}[B,A^{1+is}]Y^{-is}h(s)ds,\quad IV=\int_{\mathbb{R}}B^{is}A^{\frac12+is}[A^{\frac12},B]Y^{-is}h(s)ds.$$

{\bf Step 1:} We show that $I\in(\mathcal{L}_{p,\infty})_0.$

Without loss of generality, $0\leq A\leq 1.$ For a fixed $s\in\mathbb{R},$ the function $x\to x^{p+is}$ can be uniformly approximated by polynomials $f_m$ on the interval $[0,1].$ It is immediate that
$$[B,A^{p+is}]-[B,f_m(A)]=B(A^{p+is}-f_m(A))-(A^{p+is}-f_m(A))B.$$
Thus,
$$\|[B,A^{p+is}]-[B,f_m(A)]\|_{p,\infty}\leq 2\|B\|_{p,\infty}\|A^{p+is}-f_m(A)\|_{\infty}\to0,\quad m\to\infty.$$
Due to the assumption $[A^{\frac12},B]\in(\mathcal{L}_{p,\infty})_0,$ we have
$$[B,A]=A^{\frac12}[B,A^{\frac12}]+[B,A^{\frac12}]A^{\frac12}\in(\mathcal{L}_{p,\infty})_0.$$
Thus,
$$[B,A^k]=\sum_{l=0}^{k-1}A^l[B,A]A^{k-1-l}\in(\mathcal{L}_{p,\infty})_0.$$
It follows that $[B,f_m(A)]\in(\mathcal{L}_{p,\infty})_0.$ Thus,
\begin{equation}\label{commut conclusion}
[B,A^{p+is}]\in(\mathcal{L}_{p,\infty})_0.
\end{equation}

By hypothesis, one has $B\in\mathcal{L}_{p,\infty}.$ We infer from $0\leq A\leq 1$ that $A^{p+is}$ is a contraction for every $s\in\mathbb{R}.$ Hence, we have
\begin{equation}\label{117}
\|[B,A^{p+is}]\|_{p,\infty}\leq 2\|B\|_{p,\infty}\|A^{p+is}\|_{\infty}\leq 2\|B\|_{p,\infty}.
\end{equation}

It follows from Lemma \ref{third measurability lemma} that the mapping
$$s\to B^{is}[B,A^{p+is}]Y^{-is}h(s),\quad s\in\mathbb{R},$$
is measurable in the weak operator topology. 
Combining Lemma \ref{second measurability lemma} and \eqref{117}, we infer that $I\in(\mathcal{L}_{p,\infty})_0.$

{\bf Step 2:} By Step 1, we have that $I\in(\mathcal{L}_{p,\infty})_0.$ Repeating the argument in Step 1 for $III$ and using $[A^{\frac12},B]\in(\mathcal{L}_{p,\infty})_0,$ for $II$ and $IV$, we obtain that also $II,III,IV\in(\mathcal{L}_{p,\infty})_0.$

The next assertion is similar to \eqref{lpi mult} and it follows immediately from Corollary 2.3.16.b in \cite{LSZ}: if $X\in(\mathcal{L}_{p,\infty})_0$ and $0\leq Z\in\mathcal{L}_{p,\infty},$ then $XZ^{p-1}\in(\mathcal{L}_{1,\infty})_0$ and $Z^{p-1}X\in(\mathcal{L}_{1,\infty})_0.$ Since $B,Y\in\mathcal{L}_{p,\infty},$ it follows that
$$B^{p-1}\cdot(I+II)\in(\mathcal{L}_{1,\infty})_0,\quad (III+IV)\cdot Y^{p-1}\in(\mathcal{L}_{1,\infty})_0.$$

Also, we have by Lemma \ref{second integral formula}
$$"T(0)"=B^{p-1}[B,A^p]+B^{p-1}A^{p-\frac12}[A^{\frac12},B]+[B,A]Y^{p-1}+A^{\frac12}[A^{\frac12},B]Y^{p-1}.$$
Setting $s=0$ in \eqref{commut conclusion}, we obtain that $[B,A^p]\in(\mathcal{L}_{p,\infty})_0.$ By the commutator assumption and Leibniz rule, we have
$$[B,A]=[B,A^{\frac12}]A^{\frac12}+A^{\frac12}[B,A^{\frac12}]\in(\mathcal{L}_{p,\infty})_0.$$
Since $B,Y\in\mathcal{L}_{p,\infty},$ it follows that $"T(0)"\in(\mathcal{L}_{1,\infty})_0.$

Combining these results, we complete the proof.
\end{proof}

\section{Proof of Theorem \ref{main theorem} \eqref{maintb}}

For a detailed study of commutator estimates for the absolute value function, we refer the reader to \cite{DDPS} or \cite{CPSZ}.

\begin{lemma}\label{absolute value lemma} Let $A,B\in\mathcal{L}(H).$ If $[A,B]\in(\mathcal{L}_{p,\infty})_0$ and $[A,B^*]\in(\mathcal{L}_{p,\infty})_0$ then $[A,|B|]\in(\mathcal{L}_{p,\infty})_0.$
\end{lemma}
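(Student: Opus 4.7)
The plan is to reduce to the self-adjoint case by the standard $2\times 2$ matrix dilation, and then invoke a Lipschitz commutator estimate in the symmetric ideal $(\mathcal{L}_{p,\infty})_0$ applied to the function $f(x)=|x|$; such estimates are the subject of the references \cite{DDPS} and \cite{CPSZ} pointed to at the start of this section.

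Concretely, on the Hilbert space $H\oplus H$ I would introduce
$$\tilde A=\begin{pmatrix}A & 0\\ 0 & A\end{pmatrix},\qquad \tilde B=\begin{pmatrix}0 & B\\ B^{*} & 0\end{pmatrix}.$$
Then $\tilde B$ is self-adjoint and $\tilde B^{2}=\begin{pmatrix}BB^{*} & 0\\ 0 & B^{*}B\end{pmatrix}$, so by the functional calculus $|\tilde B|=\begin{pmatrix}|B^{*}| & 0\\ 0 & |B|\end{pmatrix}$. A direct block computation gives
$$[\tilde A,\tilde B]=\begin{pmatrix}0 & [A,B]\\ {}[A,B^{*}] & 0\end{pmatrix},$$
which lies in $(\mathcal{L}_{p,\infty})_{0}$ on $H\oplus H$ by the hypotheses on $[A,B]$ and $[A,B^{*}]$ (the ideal being closed under taking $2\times 2$ block constructions).

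Next I would apply the estimate $\|[A,f(S)]\|_{p,\infty}\leq C_{f,p}\,\|[A,S]\|_{p,\infty}$, valid for every Lipschitz $f:\mathbb{R}\to\mathbb{C}$, every self-adjoint $S\in\mathcal{L}(H)$, and every $1<p<\infty$. The key point is that the ideal $\mathcal{L}_{p,\infty}$ has Boyd indices equal to $1/p\in(0,1)$, which is precisely what is needed to run the double-operator-integral argument underlying these commutator bounds. Because that argument realises $[A,f(S)]$ as a double operator integral applied to $[A,S]$, and double operator integrals of uniformly bounded symbols map $(\mathcal{L}_{p,\infty})_{0}$ into itself, the conclusion passes from $\mathcal{L}_{p,\infty}$ to $(\mathcal{L}_{p,\infty})_{0}$. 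Applying this with $S=\tilde B$, $A$ replaced by $\tilde A$ and $f(x)=|x|$ yields $[\tilde A,|\tilde B|]\in(\mathcal{L}_{p,\infty})_{0}$. Since
$$[\tilde A,|\tilde B|]=\begin{pmatrix}[A,|B^{*}|] & 0\\ 0 & [A,|B|]\end{pmatrix},$$
reading off the $(2,2)$-block delivers the desired $[A,|B|]\in(\mathcal{L}_{p,\infty})_{0}$.

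The main technical obstacle I anticipate is a precise citation issue: one must verify that the version of the Lipschitz commutator estimate stated in \cite{DDPS} or \cite{CPSZ} covers (i) the non-smooth Lipschitz function $|x|$, and (ii) the separable part $(\mathcal{L}_{p,\infty})_{0}$ rather than just $\mathcal{L}_{p,\infty}$ itself. Both points are standard in the Birman--Solomyak / double-operator-integral theory for ideals whose Boyd indices lie strictly in $(0,1)$, but depending on how the cited statements are formulated, a brief approximation argument (either by smoothing $|x|$ by $\sqrt{x^{2}+\varepsilon}$ and letting $\varepsilon\downarrow 0$, or by approximating $\tilde B$ by finite-rank truncations of its spectral decomposition) may be needed to bridge the gap.
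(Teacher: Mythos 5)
Your proof is correct and follows essentially the same route as the paper: the same $2\times 2$ matrix dilation $\tilde A,\tilde B$ on $H\oplus H$, the same block computations for $[\tilde A,\tilde B]$ and $|\tilde B|$, and then an appeal to the commutator estimate for $|\cdot|$ applied to a self-adjoint operator. The paper pins the self-adjoint case to a specific reference (Theorem 3.4 of \cite{DDPS}) rather than re-deriving the double-operator-integral mechanism, but otherwise the arguments coincide.
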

\begin{proof} For a self-adjoint $B,$ the assertion is proved in \cite{DDPS}. Let $B\in\mathcal{L}(H)$ be arbitrary and set
$$C=
\begin{pmatrix}
A&0\\
0&A
\end{pmatrix},\quad
D=
\begin{pmatrix}
0&B\\
B^*&0
\end{pmatrix}
$$
We have
$$[C,D]=
\begin{pmatrix}
0&[A,B]\\
[A,B^*]&0
\end{pmatrix}
\in(\mathcal{L}_{p,\infty})_0.$$
Since $D$ is self-adjoint, it follows from Theorem 3.4 in \cite{DDPS} that $[C,|D|]\in(\mathcal{L}_{p,\infty})_0.$ However,
$$|D|=
\begin{pmatrix}
|B^*|&0\\
0&|B|
\end{pmatrix}
$$
Thus,
$$[C,|D|]=
\begin{pmatrix}
[A,|B^*|]&0\\
0&[A,|B|]
\end{pmatrix}
$$
This concludes the proof.
\end{proof}

The following lemma is Proposition 10, part (3) on p. 320 in \cite{Connes}.

\begin{lemma}\label{bks lemma} If $T,S\in\mathcal{L}_{p,\infty}$ are such that $T-S\in(\mathcal{L}_{p,\infty})_0,$ then $|T|^p-|S|^p\in(\mathcal{L}_{1,\infty})_0.$
\end{lemma}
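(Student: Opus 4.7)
The plan is to reduce to self-adjoint $T,S$ via the standard $2\times 2$ block trick, then to use an integral representation of $x\mapsto |x|^p$ (functional calculus together with the resolvent identity) to write $|A|^p-|B|^p$ as a weak integral whose integrand contains $A-B$ as a factor, so that the H\"older property \eqref{lpi mult} and the general multiplication rule for separable parts of weak Schatten ideals $(\mathcal{L}_{p,\infty})_0\cdot\mathcal{L}(H)\subset(\mathcal{L}_{p,\infty})_0$ (used already in Step~2 of the proof of Lemma \ref{main lemma}) yield the conclusion.

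First, setting
\[
C=\begin{pmatrix}0&T\\ T^*&0\end{pmatrix},\qquad D=\begin{pmatrix}0&S\\ S^*&0\end{pmatrix}
\]
on $H\oplus H$, both operators are self-adjoint, $C-D\in(\mathcal{L}_{p,\infty})_0$, and $|C|^p-|D|^p$ is block diagonal with blocks $|T^*|^p-|S^*|^p$ and $|T|^p-|S|^p$. Hence it suffices to treat self-adjoint $A,B\in\mathcal{L}_{p,\infty}$ with $A-B\in(\mathcal{L}_{p,\infty})_0$. Next, for $1<p<2$ the scalar identity $|x|^p=c_p\int_0^\infty\lambda^{p/2-1}\,x^2(\lambda+x^2)^{-1}\,d\lambda$ (by functional calculus, with the convention $0^p=0$) combined with the resolvent identity gives
\[
|A|^p-|B|^p=c_p\int_0^\infty\lambda^{p/2}\,(\lambda+A^2)^{-1}(A^2-B^2)(\lambda+B^2)^{-1}\,d\lambda,
\]
understood as a weak integral in the sense of Subsection \ref{weak int}. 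Writing $A^2-B^2=A(A-B)+(A-B)B$, the product rule \eqref{lpi mult} together with the stability of $(\mathcal{L}_{p,\infty})_0$ under multiplication by bounded operators gives $A^2-B^2\in(\mathcal{L}_{p/2,\infty})_0$. By balancing the uniform bound $\|A(\lambda+A^2)^{-1}\|_\infty\leq(2\sqrt\lambda)^{-1}$ (and its counterpart for $B$) against the uniform resolvent bound and splitting the $\lambda$-integral at $\lambda\sim 1$, one factors the integrand as $X_\lambda\cdot(A-B)\cdot Y_\lambda$ with $X_\lambda,Y_\lambda$ of combined ideal-order $p/(p-1)$. Thus the integrand lies pointwise in $(\mathcal{L}_{1,\infty})_0$ and admits an integrable majorant, and the analog of Lemma \ref{second measurability lemma} for $(\mathcal{L}_{1,\infty})_0$ places the weak integral in $(\mathcal{L}_{1,\infty})_0$.

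The range $p\geq 2$ is handled by writing $|A|^p=(A^2)^k|A|^{p-2k}$ with $k=\lfloor p/2\rfloor$ and expanding $(A^2)^k-(B^2)^k$ by telescoping: each term contains $A^2-B^2\in(\mathcal{L}_{p/2,\infty})_0$ while the remaining factors collectively have ideal-order $p/(2k)$, so \eqref{lpi mult} finishes the integer part. The residual fractional factor $|A|^{p-2k}-|B|^{p-2k}$ with $0<p-2k<2$ is then treated by the previous step.

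The main obstacle is the endpoint behaviour of the $\lambda$-integral in the representation above: the naive bound $\|(\lambda+A^2)^{-1}\|_\infty\leq\lambda^{-1}$ produces a non-integrable singularity at both $\lambda\to 0$ and $\lambda\to\infty$, and an integrable $\mathcal{L}_{1,\infty}$-majorant is recovered only by carefully distributing the factors of $A$ and $B$ in $A^2-B^2$ between the two resolvents and playing the spectral estimate $\|A(\lambda+A^2)^{-1}\|_\infty\leq(2\sqrt\lambda)^{-1}$ against the uniform resolvent bound on either side of a suitable threshold $\lambda\sim 1$. Once this tradeoff is in place, the conclusion follows mechanically from \eqref{lpi mult} and the integration theory of Section \ref{measurability section}.
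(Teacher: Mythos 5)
The paper does not prove this lemma; it simply cites Proposition~10(3) on p.~320 of \cite{Connes}, so there is no in-text proof for comparison. Evaluated on its own merits, your argument has a genuine gap in the central estimate.

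\textbf{The reduction and the integral representation are fine.} The $2\times 2$ block trick correctly reduces to self-adjoint $A,B$, and the identity
$$|A|^p-|B|^p=c_p\int_0^\infty\lambda^{p/2}(\lambda+A^2)^{-1}(A^2-B^2)(\lambda+B^2)^{-1}\,d\lambda$$
is a correct rewriting (via the resolvent identity) of the scalar formula for $1<p<2$.

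\textbf{The main gap is the integrable majorant.} You assert that the integrand factors as $X_\lambda(A-B)Y_\lambda$ with ``$X_\lambda,Y_\lambda$ of combined ideal-order $p/(p-1)$,'' but the factors actually at your disposal are $(\lambda+A^2)^{-1}A$ (or $B(\lambda+B^2)^{-1}$), which lie in $\mathcal{L}_{p,\infty}$, and the bare resolvents $(\lambda+A^2)^{-1}$, $(\lambda+B^2)^{-1}$, which are \emph{not} compact and contribute no ideal decay. So from $A^2-B^2=A(A-B)+(A-B)B$ the best one gets pointwise is membership in $(\mathcal{L}_{p/2,\infty})_0\subset(\mathcal{L}_{1,\infty})_0$, not the claimed order $p/(p-1)$. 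Worse, the quasi-norm estimate that this decomposition delivers is
$$\big\|\lambda^{p/2}(\lambda+A^2)^{-1}A\,(A-B)\,(\lambda+B^2)^{-1}\big\|_{p/2,\infty}\leq C\,\lambda^{p/2}\cdot\frac{1}{\lambda}\cdot\frac{1}{\lambda}=C\,\lambda^{p/2-2},$$
and $\lambda^{p/2-2}$ is not integrable near $\lambda=0$ when $p<2$. Trading the $\mathcal{L}_{p,\infty}$ quasi-norm bound $\|A(\lambda+A^2)^{-1}\|_{p,\infty}\leq\|A\|_{p,\infty}/\lambda$ for the uniform bound $\|A(\lambda+A^2)^{-1}\|_\infty\leq(2\sqrt\lambda)^{-1}$ drops the ideal exponent from $p/2$ to $p$, and $\mathcal{L}_{p,\infty}\not\subset\mathcal{L}_{1,\infty}$, so that route does not help either. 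The ``balancing'' you invoke therefore does not produce a Bochner-integrable $\mathcal{L}_{1,\infty}$-majorant, and without it the integration theory of Section~\ref{measurability section} cannot be applied. This is exactly the obstruction that makes $x\mapsto |x|^p$ fail to be operator Lipschitz for $p<2$, and it is why the published proofs of statements of this type (Birman--Koplienko--Solomyak, Ando, or the double-operator-integral decomposition in Lemma~\ref{first integral formula} of this paper) do not proceed by na\"ively integrating the resolvent kernel against $A^2-B^2$.

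\textbf{The $p\geq 2$ reduction also has a hole.} After telescoping $(A^2)^k-(B^2)^k$ you are left with $|A|^{p-2k}-|B|^{p-2k}$, $k=\lfloor p/2\rfloor$, and $p-2k$ lies in $[0,2)$ but is very often in $[0,1]$ (e.g.\ $p=3$ gives exponent $1$, $p=2.5$ gives exponent $0.5$), whereas your ``previous step'' only covers exponents strictly between $1$ and $2$. For exponents in $(0,1]$ you need either a Lipschitz-type estimate for $x\mapsto|x|$ in $(\mathcal{L}_{p,\infty})_0$ (as in \cite{DDPS,CPSZ}) or the Ando/BKS singular-value inequality $\mu(n,C^\theta-D^\theta)\leq\mu(n,|C-D|)^\theta$ for $C,D\geq 0$, $0<\theta\leq 1$ --- neither of which appears in your proposal.

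\textbf{A route that does close.} The ingredients already present in the paper yield a cleaner proof: from $T-S\in(\mathcal{L}_{p,\infty})_0$ and the block trick one obtains $|T|-|S|\in(\mathcal{L}_{p,\infty})_0$ via the results on the absolute-value map cited for Lemma~\ref{absolute value lemma}; then Lemma~\ref{first integral formula} with $X=|T|$, $Y=|S|$ gives
$$|T|^p-|S|^p=V-\int_{\mathbb R}|T|^{is}V|S|^{-is}h(s)\,ds,\qquad V=|T|^{p-1}(|T|-|S|)+(|T|-|S|)|S|^{p-1},$$
and now $|T|^{p-1},|S|^{p-1}\in\mathcal{L}_{p/(p-1),\infty}$ together with $|T|-|S|\in(\mathcal{L}_{p,\infty})_0$ place $V$ in $(\mathcal{L}_{1,\infty})_0$ by the H\"older rule \eqref{lpi mult}; the integral term also lies in $(\mathcal{L}_{1,\infty})_0$ since $\|V\|_{1,\infty}\,h(s)$ is the integrable majorant. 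This uses compactness of $\mathcal{L}_{p/(p-1),\infty}$, which is exactly the $p/(p-1)$-order factor your estimate was missing but does not obtain from the resolvents.
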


The following lemma crucially uses Lemma \ref{main lemma} from the preceding section. Recall the lightened notation: the algebra $L_{\infty}(\partial\mathbb{D})$ is identified with its natural action on the Hilbert space $L_2(\partial\mathbb{D})$ by pointwise multiplication.

\begin{lemma}\label{final lemma} Let $f\in C(\mathbb{S}^1)$ be such that $[F,f]\in\mathcal{L}_{p,\infty}.$ Let $g\in{\rm SL}(2,\mathbb{C})$ be such that the function $u=\frac{g_{11}f+g_{12}}{g_{21}f+g_{22}}$ is well defined and bounded. We have
\begin{equation}\label{63 main}
|[F,u]|^p\in |[F,f]|^p\cdot |g_{21}f+g_{22}|^{-2p}+(\mathcal{L}_{1,\infty})_0.
\end{equation}
\end{lemma}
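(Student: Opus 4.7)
The plan is to combine a quantum chain rule for $[F,u]$ with Lemma \ref{main lemma}, using a polar decomposition of $q^{-1}$ to handle its non-selfadjointness. First I would establish the chain rule $[F,u] = q^{-1}[F,f]q^{-1}$ with $q := g_{21}f+g_{22}$: applying Leibniz to the identity $uq = g_{11}f+g_{12}$ gives $[F,u]q = (g_{11}-g_{21}u)[F,f]$, and the condition $\det g = 1$ yields $g_{11}-g_{21}u = 1/q$. Writing $q^{-1} = |q|^{-1}\omega_q$ with $\omega_q := \bar q/|q|$ a unitary multiplication operator (well-defined since $u$ bounded implies $|q|$ bounded below), and using that $|q|^{-1}$ and $\omega_q$ commute as multiplication operators, I obtain $[F,u] = \omega_q R\omega_q$ with $R := |q|^{-1}[F,f]|q|^{-1}$; hence $|[F,u]|^p = \omega_q^*|R|^p\omega_q$ via the identity $|UXU|^p = U^*|X|^pU$ for a unitary $U$.

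Next, I would apply Lemma \ref{main lemma} with $A = |q|^{-2}$ and $B = |[F,f]|$ to deduce
$$|[F,f]|^p|q|^{-2p} \equiv (|q|^{-1}|[F,f]||q|^{-1})^p \pmod{(\mathcal{L}_{1,\infty})_0}.$$
The hypothesis $[|q|^{-1},|[F,f]|] \in (\mathcal{L}_{p,\infty})_0$ reduces via Lemma \ref{absolute value lemma} to the commutators $[|q|^{-1},[F,f]^{(*)}]\in(\mathcal{L}_{p,\infty})_0$. Since $|q|^{-1}$ commutes with $f$, each equals an iterated commutator $[[|q|^{-1},F],f]$, which can be placed in $(\mathcal{L}_{p,\infty})_0$ via the DOI machinery of Section \ref{doi} applied to $|q|^{-1}$ (whose quantum derivative is controlled through the identity $|q|^{-2} = q^{-1}\bar q^{-1}$ and the facts $[F,q^{\pm 1}]$, $[F,\bar q^{\pm 1}] \in \mathcal{L}_{p,\infty}$).

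The crux is to show $|R|^p \equiv (|q|^{-1}|[F,f]||q|^{-1})^p \pmod{(\mathcal{L}_{1,\infty})_0}$. With the polar decomposition $[F,f] = V|[F,f]|$ and the identity $|[F,f]|(I-V^*V) = 0$, one computes
$$|R|^2 - (|q|^{-1}|[F,f]||q|^{-1})^2 = |q|^{-1}|[F,f]|\,V^*[|q|^{-2},V]\,|[F,f]||q|^{-1} \in \mathcal{L}_{p/2,\infty}.$$
Lifting this square-level estimate to the required $(\mathcal{L}_{1,\infty})_0$-statement at the $p$-th power is the main obstacle and calls for either a refined application of Lemma \ref{main lemma} or a direct DOI computation for the $p/2$-power of the two positive operators involved. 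Once this step is in place, combining the three equivalences via conjugation-invariance of $(\mathcal{L}_{1,\infty})_0$ under $\omega_q$, together with a final estimate $[\omega_q,|[F,f]|^p|q|^{-2p}] \in (\mathcal{L}_{1,\infty})_0$ (obtained from $[F,\omega_q] \in \mathcal{L}_{p,\infty}$, itself deduced from $\omega_q^2 = \bar q/q$, by another Lemma \ref{main lemma}-type argument), yields the stated inclusion $|[F,u]|^p \in |[F,f]|^p|q|^{-2p} + (\mathcal{L}_{1,\infty})_0$.
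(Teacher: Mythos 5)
Your opening computations are correct and your $\omega_q$-conjugation trick is a nice attempt, but the proof has a genuine gap exactly at the step you yourself flag as ``the crux'': showing $|R|^p\equiv(|q|^{-1}|[F,f]||q|^{-1})^p\bmod(\mathcal{L}_{1,\infty})_0$, where $R=|q|^{-1}[F,f]|q|^{-1}$ is not positive (indeed not normal). Your square-level computation, even if the extra $(V^*V-I)[|q|^{-2},|[F,f]|]$ term is accounted for, only compares $|R|^2$ with $(|q|^{-1}|[F,f]||q|^{-1})^2$ inside $\mathcal{L}_{p/2,\infty}$, and you would then need a ``$p/2$-power'' version of Lemma \ref{bks lemma} or Lemma \ref{main lemma}, which for $1<p<2$ means an exponent $p/2<1$ that the cited machinery does not cover. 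You also introduce the polar isometry $V$ of $[F,f]$, whose commutator with multiplication operators is not controlled by anything in the paper. So as written the argument does not close.

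The paper sidesteps this entirely by never forming the absolute value of a non-positive operator with multiplication factors on \emph{both} sides. From $[F,u]=v[F,f]v$ (your $v=q^{-1}$), it first pushes the left $v$ to the right: $[F,u]=[F,f]v^2+[v,[F,f]]v$, and the commutator lies in $(\mathcal{L}_{p,\infty})_0$ by Theorem 8(a) of \cite{Connes} (commutation with a continuous symbol is small). Lemma \ref{bks lemma} then gives $|[F,u]|^p\equiv|[F,f]v^2|^p=||[F,f]|v^2|^p\bmod(\mathcal{L}_{1,\infty})_0$, where the last equality is the elementary scalar-factor identity $|T\phi|=||T|\phi|$. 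After two more passes of ``commute a continuous multiplication past $|[F,f]|$ modulo $(\mathcal{L}_{p,\infty})_0$, then apply Lemma \ref{bks lemma}, then simplify $|\phi T|=||\phi|T|$'', one reaches $\big||v|\cdot|[F,f]|\cdot|v|\big|^p=(A^{1/2}BA^{1/2})^p$ with $A=|v|^2$, $B=|[F,f]|$, and only at that final positive stage is Lemma \ref{main lemma} invoked. This way, the only absolute-value comparison ever used is Lemma \ref{bks lemma} in the form $T-S\in(\mathcal{L}_{p,\infty})_0\Rightarrow|T|^p-|S|^p\in(\mathcal{L}_{1,\infty})_0$, applied always at exponent $p$, so no fractional-power lemma and no polar isometry of $[F,f]$ are needed. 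If you want to repair your argument you would essentially have to reorganize along these lines: avoid symmetric conjugation by $|q|^{-1}$ and instead repeatedly shuffle the multiplication factors to one side of $[F,f]$ (or of $|[F,f]|$) before taking $|\cdot|^p$.
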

\begin{proof} Since $u$ is bounded, it follows that $f$ is separated from $-\frac{g_{22}}{g_{21}}\in\bar{\mathbb{C}}.$ Thus, $v=(g_{21}f+g_{22})^{-1}\in C(\mathbb{S}^1).$ If $g_{21}=0,$ then the assertion is trivial. Further, we assume that $g_{21}\neq0.$ Clearly, $u=\frac{g_{11}}{g_{21}}-\frac1{g_{21}}v.$ Thus,
$$[F,u]=-\frac1{g_{21}}[F,v]=\frac1{g_{21}}\cdot v[F,g_{21}f+g_{22}]v=v[F,f]v.$$
Therefore, we have
$$[F,u]=[F,f]v^2+[v,[F,f]]\cdot v.$$
Since $v\in C(\mathbb{S}^1),$ it follows from Theorem 8 (a) on p. 319 in \cite{Connes} that
$$[F,u]\in[F,f]v^2+(\mathcal{L}_{p,\infty})_0.$$
By Lemma \ref{bks lemma}, we have (everywhere in the proof below, $LHS$ means the left hand side of \eqref{63 main})
$$LHS\in\Big|[F,f]v^2\Big|^p+(\mathcal{L}_{1,\infty})_0.$$
Equivalently,
$$LHS\in\Big||[F,f]|v^2\Big|^p+(\mathcal{L}_{1,\infty})_0.$$

Since $v^2\in C(\mathbb{S}^1),$ it follows from Theorem 8 (a) (on p. 319 in \cite{Connes}) that
$$\Big[[F,f],v^2\Big]\in(\mathcal{L}_{p,\infty})_0.$$
By Lemma \ref{absolute value lemma}, we have
\begin{equation}\label{comm cond holds1}
\Big[|[F,f]|,v^2\Big]\in(\mathcal{L}_{p,\infty})_0.
\end{equation}
It follows from Lemma \ref{bks lemma} that
$$LHS\in\Big|v^2|[F,f]|\Big|^p+(\mathcal{L}_{1,\infty})_0.$$
Equivalently,
$$LHS\in\Big||v|^2|[F,f]|\Big|^p+(\mathcal{L}_{1,\infty})_0.$$

Since $|v|\in C(\mathbb{S}^1),$ it follows from Theorem 8 (a) (on p. 319 in \cite{Connes}) that
$$\Big[[F,f],|v|\Big]\in(\mathcal{L}_{p,\infty})_0.$$
By Lemma \ref{absolute value lemma}, we have
\begin{equation}\label{comm cond holds2}
\Big[|[F,f]|,|v|\Big]\in(\mathcal{L}_{p,\infty})_0.
\end{equation}

We have
$$|v|^2|[F,f]|=|v|\cdot|[F,f]|\cdot|v|-|v|\cdot[|[F,f]|,|v|].$$
Thus,
$$|v|^2|[F,f]|\in |v|\cdot|[F,f]|\cdot|v|+(\mathcal{L}_{p,\infty})_0.$$
It follows from Lemma \ref{bks lemma} that
$$\Big||v|^2\cdot|[F,f]|\Big|^p\in\Big||v|\cdot |[F,f]|\cdot |v|\Big|^p+(\mathcal{L}_{1,\infty})_0.$$
Thus,
$$LHS\in\Big||v|\cdot|[F,f]|\cdot |v|\Big|^p+(\mathcal{L}_{1,\infty})_0.$$

Set $A=|v|^2$ and $B=|[F,f]|.$ We have
$$LHS\in(A^{\frac12}BA^{\frac12})^p+(\mathcal{L}_{1,\infty})_0.$$
On the other hand, the equality \eqref{comm cond holds2} reads as follows: $[B,A^{\frac12}]\in(\mathcal{L}_{p,\infty})_0.$ It follows now from Lemma \ref{main lemma} that
$$LHS\in B^pA^p+(\mathcal{L}_{1,\infty})_0.$$
This is exactly \eqref{63 main} and the proof is complete.
\end{proof}

We also need the following auxiliary lemma. Page 314 in \cite{Connes} mentions a corresponding assertion for the Dirac operator on the line and the action of ${\rm SL}(2,\mathbb{R}).$ Those settings (and results) are unitarily equivalent.

\begin{lemma}\label{sl2r representation lemma} The mapping $h\to U_h,$ $h\in{\rm SU}(1,1),$ defined by the formula
$$(U_h\xi)(z)=\xi(\frac{\alpha z+\beta}{\bar{\beta}z+\bar{\alpha}})\frac1{\bar{\beta}z+\bar{\alpha}},\quad\xi\in L_2(\partial\mathbb{D}),\quad |z|=1,$$
where
$$h=
\begin{pmatrix}
\alpha&\beta\\
\bar{\beta}&\bar{\alpha}
\end{pmatrix},\quad |\alpha|^2-|\beta|^2=1,$$
is a unitary representation of the group ${\rm SU}(1,1)$ on the Hilbert space $L_2(\partial\mathbb{D})$ which commutes with $F.$
\end{lemma}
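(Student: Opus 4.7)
The plan is to verify three things in turn: that each $U_h$ is a well-defined isometry of $L_2(\partial\mathbb{D})$, that $h\mapsto U_h$ is a group homomorphism, and that each $U_h$ commutes with $F$. The central computation that drives everything is the derivative of the Möbius map $k_h(z)=\frac{\alpha z+\beta}{\bar\beta z+\bar\alpha}$. Using $|\alpha|^2-|\beta|^2=1$, a direct computation gives
\begin{equation*}
k_h'(z)=\frac{\alpha\bar\alpha-\beta\bar\beta}{(\bar\beta z+\bar\alpha)^2}=\frac{1}{(\bar\beta z+\bar\alpha)^2},
\end{equation*}
so the cocycle $(\bar\beta z+\bar\alpha)^{-1}$ appearing in the definition of $U_h$ is exactly a holomorphic square root of $k_h'(z)$.

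For unitarity, I would apply the change of variable $w=k_h(z)$ on the unit circle (which $k_h$ preserves since $h\in{\rm SU}(1,1)$): the arc-length Jacobian is $|k_h'(z)|=|\bar\beta z+\bar\alpha|^{-2}$, and this exactly cancels the factor $|\bar\beta z+\bar\alpha|^{-2}$ coming from the modulus squared of $(\bar\beta z+\bar\alpha)^{-1}$, so $\|U_h\xi\|_2=\|\xi\|_2$. For the homomorphism property $U_{h_1}U_{h_2}=U_{h_1h_2}$, the key identity is $k_{h_1}\circ k_{h_2}=k_{h_1h_2}$ together with the chain-rule factorization of the automorphy factor; concretely, if $h_i$ has entries $(\alpha_i,\beta_i)$, one checks by direct substitution that $(\bar\beta_2 z+\bar\alpha_2)\cdot(\bar\beta_1 k_{h_2}(z)+\bar\alpha_1)$ equals the lower-right denominator of $h_1h_2$ evaluated at $z$. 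Together these show $h\mapsto U_h$ is a unitary representation.

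The main point, and the only one that uses more than brute computation, is the commutation with $F$. Writing $F=2P-1$ with $P$ the Riesz (Hardy) projection, it suffices to show that $U_h$ preserves $H^2(\mathbb{D})$; since $U_h$ is unitary, preserving $H^2$ automatically forces it to preserve the orthogonal complement and hence to commute with $P$. To see that $U_h$ sends $H^2$ to $H^2$, note that $k_h$ is a conformal automorphism of $\mathbb{D}$, and $(\bar\beta z+\bar\alpha)^{-1}$ is holomorphic on $\overline{\mathbb{D}}$: its only pole is at $z=-\bar\alpha/\bar\beta$, and from $|\alpha|^2=1+|\beta|^2$ we get $|-\bar\alpha/\bar\beta|=|\alpha|/|\beta|>1$ (the case $\beta=0$ being trivial). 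Hence for $\xi\in H^2$ with holomorphic extension $\tilde\xi$ to $\mathbb{D}$, the function $\tilde\xi(k_h(z))\cdot(\bar\beta z+\bar\alpha)^{-1}$ is holomorphic on $\mathbb{D}$ and its boundary values coincide with $U_h\xi$; being already in $L_2(\partial\mathbb{D})$ by unitarity, this places $U_h\xi$ in $H^2$.

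The step I expect to be least automatic is the algebraic verification of the cocycle identity for the denominators under matrix multiplication — it is elementary but tedious. The unitarity and the $H^2$-invariance, by contrast, follow cleanly from the two facts $|k_h'(z)|=|\bar\beta z+\bar\alpha|^{-2}$ and $|\alpha|>|\beta|$.
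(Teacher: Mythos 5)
Your overall route is the same as the paper's: unitarity by the arc-length change of variables on $\partial\mathbb{D}$ using $|k_h'(z)|=|\bar\beta z+\bar\alpha|^{-2}$, homomorphism by the cocycle identity, and commutation with $F$ by showing $U_h$ maps the Hardy space into itself (the paper does this via a binomial expansion of $U_h e_n$ for $n\geq 0$; your holomorphic-extension argument is an equivalent reformulation, once one observes that it suffices to check it on the dense set of polynomials and appeal to unitarity).

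There is, however, one genuine logical gap. You write that ``since $U_h$ is unitary, preserving $H^2$ automatically forces it to preserve the orthogonal complement and hence to commute with $P$.'' But what your argument actually establishes is only $U_h H^2 \subseteq H^2$, and a unitary operator mapping $H^2$ \emph{into} $H^2$ need not commute with $P$: multiplication by $z$ on $L_2(\partial\mathbb{D})$ is unitary, sends $H^2$ into $H^2$, and yet fails badly to commute with the Riesz projection. The inclusion $U_h H^2 \subseteq H^2$ gives only $P U_h P = U_h P$. To upgrade this to $P U_h = U_h P$ one must also invoke the same inclusion for $h^{-1}$ (using the homomorphism property $U_h^{-1}=U_{h^{-1}}$), take adjoints, and combine; this is exactly the step the paper carries out explicitly with ``Taking the adjoint\ldots Replacing $h$ with $h^{-1}$\ldots''. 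The fix is immediate with the tools you already have, but the assertion as you have written it is false and needs that extra line.
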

\begin{proof} The fact that $h\to U_h$ is a homomorphism is simple and we omit the proof.

First, we show this representation is unitary. Indeed, we have
$$\langle U_h\xi,U_h\xi\rangle=\frac1{2\pi}\int_{\partial\mathbb{D}}|\xi\circ h|^2(e^{it})\cdot \frac1{|\bar{\beta}e^{it}+\bar{\alpha}|^2}dt.$$
On the circle $\partial\mathbb{D},$ we have
$$h:e^{it}\to e^{is}\stackrel{def}{=}\frac{\alpha e^{it}+\beta}{\bar{\beta}e^{it}+\bar{\alpha}}.$$
Thus,
$$\frac{ds}{dt}=\frac1ie^{-is}\cdot \frac{d(e^{is})}{dt}=\frac{\bar{\beta}e^{it}+\bar{\alpha}}{i(\alpha e^{it}+\beta)}\cdot\frac1{(\bar{\beta}e^{it}+\bar{\alpha})^2}\cdot ie^{it}=\frac1{|\bar{\beta}e^{it}+\bar{\alpha}|^2}.$$
Thus,
$$\langle U_h\xi,U_h\xi\rangle=\frac1{2\pi}\int_{\partial\mathbb{D}}|\xi|^2(e^{is})ds=\langle h,h\rangle.$$
Thus, $U_h$ is indeed a unitary operator.

Let $P_+\stackrel{def}{=}E_D[0,\infty).$ Let $e_n(z)=z^n,$ $|z|=1,$ $n\in\mathbb{Z}.$ If $n\geq0,$ then
$$(U_he_n)(z)=\frac{(\alpha z+\beta)^n}{(\bar{\beta}z+\bar{\alpha})^{n+1}}=(\bar{\alpha})^{-n-1}(\alpha z+\beta)^n(1+\frac{\bar{\beta}}{\bar{\alpha}}z)^{-n-1}=$$
$$=(\bar{\alpha})^{-n-1}(\alpha z+\beta)^n\sum_{m=0}^{\infty}\binom{-n-1}{m}(\frac{\bar{\beta}}{\bar{\alpha}}z)^m.$$
The series converges uniformly on the unit circle $\mathbb{S}^1$ because $|\beta|<|\alpha|.$ The series contains only positive powers of $z$ and, therefore, $P_+U_he_n=U_he_n.$

It follows from the preceding paragraph that $P_+U_hP_+=U_hP_+.$ Taking the adjoint, we obtain $P_+U_h^{-1}P_+=P_+U_h^{-1}.$ Replacing $h$ with $h^{-1},$ we obtain $P_+U_hP_+=P_+U_h.$ Thus, $P_+U_h=U_hP_+.$ It follows that $U_h$ commutes with $F.$
\end{proof}

We are now ready to prove our main result.

\begin{proof}[Proof of Theorem \ref{main theorem} \eqref{maintb}] Consider the linear functional on $C(\Lambda(G))$ defined by the formula
$$f\to \varphi((f\circ Z)\cdot|[F,Z]|^p),\quad f\in C(\Lambda(G)),$$
where $\varphi$ is a continuous trace on $\mathcal{L}_{1,\infty}.$

It follows from boundedness of $\varphi$ and \eqref{115} that
$$|\varphi((f\circ Z)\cdot|[F,Z]|^p)|\leq\|\varphi\|_{\mathcal{L}_{1,\infty}^*}\|f\circ Z\|_{\infty}\|[F,Z]\|_{p,\infty}^p.$$
Thus, our functional is bounded and, by the Riesz Representation Theorem, it admits a representation of the form
\begin{equation}\label{funk rep}
\varphi((f\circ Z)\cdot|[F,Z]|^p)=\int_{\Lambda(G)}f(t)d\kappa(t),\quad f\in C(\Lambda(G)).
\end{equation}
Here, $\kappa$ is some Radon measure on $\Lambda(G).$

We claim that
\begin{equation}\label{kappa g invariant}
\int_{\Lambda(G)}(f\circ g^{-1})(t)d\kappa(t)=\int_{\Lambda(G)}f(t)|g'(t)|^pd\kappa(t),\quad f\in C(\Lambda(G)),\quad g\in G.
\end{equation}

To see this, let $\pi(G)\subset {\rm SU}(1,1)$ be the Fuchsian group as in the proof of part \eqref{mainta}. Let $h\to U_h$ be its unitary representation given in Lemma \ref{sl2r representation lemma}. It is immediate that
$$U_{\pi(g)}(\xi\cdot\eta)=(\xi\circ\pi(g))\cdot U_{\pi(g)}(\eta),\quad \xi\in L_{\infty}(\partial\mathbb{D}),\quad \eta\in L_2(\partial\mathbb{D}).$$
Thus,
$$U_{\pi(g)}ZU_{\pi(g)}^{-1}=Z\circ\pi(g)=g\circ Z,\quad (f\circ g^{-1}\circ Z)=U_{\pi(g)}^{-1}(f\circ Z)U_{\pi(g)}.$$

Since $U_{\pi(g)}$ commutes with $F,$ it follows from the preceding formula that
$$(f\circ g^{-1}\circ Z)|[F,Z]|^p=U_{\pi(g)}^{-1}(f\circ Z)U_{\pi(g)}|[F,Z]|^p=$$
$$=U_{\pi(g)}^{-1}(f\circ Z)|U_{\pi(g)}[F,Z]U_{\pi(g)}^{-1}|^p\cdot U_{\pi(g)}=U_{\pi(g)}^{-1}(f\circ Z)|[F,g\circ Z]|^p\cdot U_{\pi(g)}.$$
It follows from the unitary invariance of the trace $\varphi$ that
$$\varphi((f\circ g^{-1}\circ Z)\cdot|[F,Z]|^p)=\varphi((f\circ Z)\cdot|[F,g\circ Z]|^p).$$
By Lemma \ref{final lemma} with $f=Z,$ we have
$$|[F,g\circ Z]|^p\in |[F,Z]|^p\cdot (|g'|^p\circ Z)+(\mathcal{L}_{1,\infty})_0.$$
Since $\varphi$ vanishes on $(\mathcal{L}_{1,\infty})_0,$ it follows that
$$\varphi((f\circ g^{-1}\circ Z)\cdot|[F,Z]|^p)=\varphi((f\circ Z)\cdot|[F,Z]|^p\cdot (|g'|^p\circ Z))=$$
$$=\varphi(((f|g'|^p)\circ Z)\cdot|[F,Z]|^p)\stackrel{\eqref{funk rep}}{=}\int_{\Lambda(G)}f(t)|g'(t)|^pd\kappa(t).$$
This proves \eqref{kappa g invariant}. In other words, $\kappa$ is a geometric measure.

As explained in the (first few lines of the) proof of Lemma \ref{reduction to sullivan invert}, the group $G$ is geometrically finite. Theorem 1 in \cite{sullivan84} states that geometric (probability) measure on $\Lambda(G)$ is unique. Setting $c(G,\varphi)=\kappa(\Lambda(G))$ completes the proof.
\end{proof}

\section{Proof of Theorem \ref{main theorem} \eqref{maintc}}\label{app1}
Let us introduce the power semigroup as follows.
$$(P_sx)(t)=x(t^s),\quad t,s>0.$$
If $\omega$ is an extended limit which is  invariant under $P_s$ (we say that it is power invariant), then $\omega\circ\log$ is a state on $L_{\infty}(-\infty,\infty)$ which is dilation invariant. This state vanishes on every function whose support is bounded from above and is, therefore, identified with a dilation invariant extended limit on $L_{\infty}(0,\infty).$

In this section, we consider those extended limits which are dilation and power invariant. The following assertion is available as Theorem 8.6.8 in \cite{LSZ}. For convenience of the reader, we present a short proof here.

\begin{lemma}\label{dix to zeta} If $\omega$ is a dilation and power invariant extended limit, then
$${\rm Tr}_{\omega}(A)=(\omega\circ\log)\Big(t\to\frac1t{\rm Tr}(A^{1+\frac1t})\Big),\quad 0\leq A\in\mathcal{L}_{1,\infty}.$$
\end{lemma}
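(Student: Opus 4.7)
The plan is to express both sides as $\omega$-averages of $\psi(n_A(\cdot))$ and then reconcile them using power invariance of $\omega$ together with a uniform $\mathcal{L}_{1,\infty}$-estimate. I would assume without loss of generality that $\|A\|_{\infty}\le 1$ (the scaling error being $\omega$-null), so that $\mu(u,A)\le 1$. The first step is to derive a layer-cake identity: using $\mu(u,A)^s=s\int_{-\log\mu(u,A)}^{\infty}e^{-sy}\,dy$ and Fubini, I would obtain, for $\psi(T)=\int_0^T\mu(u,A)\,du$ and $n_A(x)={\rm Tr}(E_A(x,\infty))$, the exact formula
$$
{\rm Tr}(A^{1+s}) = s\int_0^{\infty} e^{-sy}\,\psi(n_A(e^{-y}))\,dy.
$$

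Substituting $s=1/\log t$ and $y=r\log t$ transforms this into
$$
\frac{1}{\log t}{\rm Tr}(A^{1+1/\log t}) = \frac{1}{\log t}\int_0^{\infty} e^{-r}\,\psi(n_A(t^{-r}))\,dr,
$$
whose integrand is uniformly dominated by $Ce^{-r}(r+1)$. I would then exchange $\omega$ with $\int_0^{\infty}\cdot\,dr$ and apply power invariance: the substitution $t\mapsto t^r$ applied to $t\mapsto\psi(n_A(1/t))/\log t$ yields $\omega(t\mapsto\psi(n_A(t^{-r}))/\log t)=r\cdot\omega(t\mapsto\psi(n_A(1/t))/\log t)$. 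Using $\int_0^{\infty} re^{-r}\,dr=1$, this gives
$$
(\omega\circ\log)(t\mapsto(1/t){\rm Tr}(A^{1+1/t})) = \omega(t\mapsto\psi(n_A(1/t))/\log t).
$$

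To close the argument, I would identify this with the Dixmier trace $\omega(t\mapsto\psi(t)/\log(1+t))$. The replacement of $\log(1+t)$ by $\log t$ is $\omega$-null (the difference is $O(1/(t\log t))$). The crucial estimate is the uniform-in-$t$ bound $|\psi(t)-\psi(n_A(1/t))|\le C^2$, where $C=\|A\|_{1,\infty}$: noting $\mu(u,A)\le C/u$ (hence $n_A(1/t)\le Ct$), $\mu(u,A)\le 1/t$ for $u\ge n_A(1/t)$, and $\mu(u,A)\le\mu(t,A)\le C/t$ for $u\ge t$, a two-case split on whether $n_A(1/t)\lessgtr t$ bounds $\int_{\min(t,n_A(1/t))}^{\max(t,n_A(1/t))}\mu(u,A)\,du$ by $C^2$. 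Dividing by $\log t$ produces an $o(1)$ function annihilated by $\omega$, completing the identification.

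The main obstacle will be justifying the Fubini-type exchange of $\omega$ with the $r$-integral, since $\omega$ is only a state on $L_{\infty}(0,\infty)$ rather than a countably additive measure; I would handle this via Riemann-sum approximation, leveraging the uniform-in-$t$ domination above, or equivalently by interpreting the $r$-integral as a Bochner integral in $L_{\infty}$ in the spirit of Lemma \ref{second measurability lemma}.
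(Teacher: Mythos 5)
Your proof is correct in its essential structure and takes a genuinely different route from the paper's. The paper starts from the right-hand side, drops the exponent $1+\frac1t$ on $(n+1)\mu(n,A)$ to $1$ (an $O(\frac1t)$ error), and then invokes Theorem~8.6.7 in \cite{LSZ}, a Laplace-transform Abelian/Tauberian result for dilation invariant states, to pass from the ``zeta'' expression $\sum_n (n+1)^{-1/t}\mu(n,A)$ to the truncated sum $\sum_{n+1<e^t}\mu(n,A)$. Your argument instead establishes the exact layer-cake identity ${\rm Tr}(A^{1+s}) = s\int_0^\infty e^{-sy}\psi(n_A(e^{-y}))\,dy$, uses power invariance of $\omega$ explicitly to contract the resulting double integral (the paper uses it only implicitly through the cited theorem), and closes with the elementary two-case bound $|\psi(t)-\psi(n_A(1/t))| \le \max(1,C^2)$. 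Both routes ultimately reconcile the zeta functional with the Cesaro-mean Dixmier trace; yours is more self-contained (no appeal to LSZ 8.6.7) at the cost of having to justify the interchange of $\omega$ with the $r$-integral yourself.

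On that interchange: the Bochner-integral route is the weaker of your two suggestions, because $L_\infty(0,\infty)$ is non-separable and the map $r\mapsto \bigl(t\mapsto\frac1{\log t}e^{-r}\psi(n_A(t^{-r}))\bigr)$ is \emph{not} norm-continuous (the step function $n_A$ produces $L_\infty$-jumps whose amplitude does not vanish as two $r$-values merge), so strong measurability is not obvious. The Riemann-sum route, however, can be made rigorous: the integrand $r\mapsto\frac1{\log t}e^{-r}\psi(n_A(t^{-r}))$ has total variation on $(0,\infty)$ bounded uniformly in $t\ge e$, namely by $\frac1{\log t}\int_0^\infty e^{-r}\psi(n_A(t^{-r}))\,dr + \frac1{\log t}\sum_n \mu(n,A)^{1+1/\log t}$; the first term is $O(1)$ by your domination, and the second is $\frac1{\log t}{\rm Tr}(A^{1+1/\log t}) = O(1)$ since $\sum_n\mu(n,A)^{1+s}\le C^{1+s}\zeta(1+s)$. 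A Riemann sum on $[0,N]$ then approximates the truncated integral with error $\le \mathrm{mesh}\times\mathrm{TV}$, uniformly in $t\ge e$, and the tail $\int_N^\infty$ is uniformly $O(e^{-N}N)$; since $\omega$ is finitely additive and norm-continuous on $L_\infty$ and kills bounded-support discrepancies, the exchange follows. You should make this uniform total-variation estimate explicit; once that is done, the argument is complete.
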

\begin{proof} We have
$$RHS=(\omega\circ\log)\Big(t\to\frac1t\sum_{n\geq0}(n+1)^{-1-\frac1t}\cdot ((n+1)\mu(n,A))^{1+\frac1t}\Big)).$$
We have
$$|(n+1)\mu(n,A)-((n+1)\mu(n,A))^{1+\frac1t}|\leq\sup\{|x-x^{1+\frac1t}|:\ 0\leq x\leq\|A\|_{1,\infty}\}=O(\frac1t)$$
as $t\to\infty.$ Therefore,
$$RHS=(\omega\circ\log)\Big(t\to\frac1t\sum_{n\geq0}(n+1)^{-\frac1t}\mu(n,A)\Big).$$
Set now
$$\beta=\sum_{n\geq0}\mu(n,A)\chi_{(\log(n+1),\infty)}.$$
Clearly, $\beta(u)=O(u)$ as $u\uparrow\infty.$ Using Theorem 8.6.7 in \cite{LSZ}, we infer
$$\omega(t\to\frac{\beta(t)}{t})=\omega(t\to\frac{h(t)}{t}),$$
where
$$h(t)=\int_0^{\infty}e^{-\frac{u}{t}}d\beta(u)=\sum_{n\geq0}(n+1)^{-\frac1t}\mu(n,A).$$
Thus,
$$RHS=(\omega\circ\log)\Big(t\to\frac1t\sum_{\log(n+1)<t}\mu(n,A)\Big)\stackrel{def}{=}\omega\Big(t\to\frac1{\log(t)}\sum_{n+1<t}\mu(n,A)\Big).$$
Since $A\in\mathcal{L}_{1,\infty},$ it follows that
$$\int_0^t\mu(s,A)ds=\sum_{n+1<t}\mu(n,A)+O(1).$$
This completes the proof.
\end{proof}

\begin{corollary} If $\omega$ is a dilation and power invariant extended limit, then $c(G,{\rm Tr}_{\omega})>0.$
\end{corollary}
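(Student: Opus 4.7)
The plan is to combine three ingredients already assembled in the paper: the trace formula of Theorem \ref{main theorem}(b), the zeta-function representation of Dixmier traces in Lemma \ref{dix to zeta}, and the divergence estimate of Lemma \ref{maintc core}. The role of each step is clear enough that there is no real obstacle beyond verifying that the positive lower bound produced by Lemma \ref{maintc core} survives under the extended limit $\omega\circ\log$.

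First, I apply Theorem \ref{main theorem}(b) to the constant function $f\equiv 1$. Because $\nu$ is a probability measure on $\Lambda(G)$, this identifies
$$c(G,\mathrm{Tr}_\omega)=\mathrm{Tr}_\omega\bigl(|[F,Z]|^{p}\bigr),$$
so positivity of the constant reduces to positivity of the Dixmier trace. Next, $|[F,Z]|^{p}\in\mathcal{L}_{1,\infty}^{+}$ by Theorem \ref{main theorem}(a), and $\omega$ is dilation and power invariant by hypothesis, so Lemma \ref{dix to zeta} rewrites this Dixmier trace as
$$\mathrm{Tr}_\omega\bigl(|[F,Z]|^{p}\bigr)=(\omega\circ\log)\Bigl(t\mapsto\tfrac{1}{t}\,\mathrm{Tr}\bigl(|[F,Z]|^{p+p/t}\bigr)\Bigr).$$

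The core step is to show that the function inside the extended limit is bounded below by a strictly positive constant for $t$ large. Using $\mathrm{Tr}(|T|^{q})=\|T\|_{q}^{q}$ and setting $s=p/t$, the integrand equals $\tfrac{s}{p}\|[F,Z]\|_{p+s}^{p+s}$. Lemma \ref{maintc core} supplies $c_0,s_0>0$ with $s\,\|[F,Z]\|_{p+s}\geq c_0$ for $s\in(0,s_0)$. Raising to the $(p+s)$-th power and multiplying by $s/p$,
$$\tfrac{s}{p}\,\|[F,Z]\|_{p+s}^{p+s}\geq\tfrac{c_0^{\,p+s}}{p}\,s^{\,1-p-s}.$$
Since $p>1$ and $s^{-s}\to 1$ as $s\downarrow 0$, the right-hand side is bounded below by some positive constant $c_1$ for all sufficiently small $s$, equivalently for all sufficiently large $t$.

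Finally, $\omega\circ\log$ is a state on $L_\infty(0,\infty)$ vanishing on functions of bounded support, so it behaves like a generalized limit at $+\infty$: for any nonnegative $g\in L_\infty(0,\infty)$ with $g(t)\geq c_1$ for $t\geq T_0$, the pointwise inequality $g\geq c_1\chi_{[T_0,\infty)}$, together with positivity and $(\omega\circ\log)(\chi_{[T_0,\infty)})=(\omega\circ\log)(1)=1$, gives $(\omega\circ\log)(g)\geq c_1$. Applied to the integrand above, this yields $\mathrm{Tr}_\omega(|[F,Z]|^{p})\geq c_1>0$, and hence $c(G,\mathrm{Tr}_\omega)>0$. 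The only place where the geometric content of the problem enters is Lemma \ref{maintc core}; the rest is a direct assembly of the already-developed Dixmier trace machinery.
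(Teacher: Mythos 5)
Your argument is correct and follows the same route as the paper: reduce to $c(G,\mathrm{Tr}_\omega)=\mathrm{Tr}_\omega(|[F,Z]|^p)$ via the trace formula with $f\equiv 1$, rewrite the Dixmier trace through Lemma \ref{dix to zeta}, and feed in the divergence estimate from Lemma \ref{maintc core}. You have additionally made explicit the elementary but necessary bookkeeping (the substitution $s=p/t$, the inequality $\tfrac{s}{p}\|[F,Z]\|_{p+s}^{p+s}\ge \tfrac{c_0^{p+s}}{p}s^{1-p-s}$ with $p>1$, and the behavior of a state vanishing on bounded support) that the paper leaves implicit in passing from $\liminf_{s\to0}s\|[F,Z]\|_{p+s}>0$ to $\liminf_{s\to 0}s\,\mathrm{Tr}(|[F,Z]|^{p(1+s)})>0$.
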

\begin{proof} Let $T=|[F,Z]|^p.$ It follows from Lemma \ref{maintc core} that
$$\liminf_{s\to0}s{\rm Tr}(T^{1+s})>0.$$
Therefore,
$$(\omega\circ\log)\Big(t\to\frac1t{\rm Tr}(T^{1+\frac1t})\Big)>0.$$
The assertion follows now from Lemma \ref{dix to zeta}.
\end{proof}

\begin{remark}
The existence of a Dixmier trace $\varphi$ on $\mathcal{L}_{1,\infty}$ such that $\varphi(T)\neq0$ follows from the weaker estimate $\limsup_{s\to0}s{\rm Tr}(T^{1+s})>0$. Indeed, assume the contrary, that is $\varphi(T)=0$ for every Dixmier trace $\varphi.$ It follows from Theorem 9.3.1 in \cite{LSZ} that   
$$\lim_{s\to0}s{\rm Tr}(T^{1+s})=0,$$
which is not the case. 
Since $\varphi(T)=c(G,\varphi),$ the assertion follows.	
\end{remark}


\begin{thebibliography}{99}
\bibitem{Beardon} Beardon A. {\it The geometry of discrete groups.} Corrected reprint of the 1983 original. Graduate Texts in Mathematics, {\bf 91}. Springer-Verlag, New York, 1995.
\bibitem{BE} Bers L. {\it Simultaneous uniformization}.  Bull. Amer. Math. Soc. 66 1960 94--97. 
\bibitem{BE2} Bers L. {\it On boundaries of Teichm\"uller spaces and on Kleinian groups}. I. Ann. of Math. (2) 91 (1970), 570--600. 
\bibitem{biwas} Biswas, I. Nag, S. Sullivan, D.
{\it Determinant bundles, Quillen metrics and Mumford isomorphisms over the universal commensurability Teichmuller space}. 
Acta Math. 176 (1996), no. 2, 145--169. 
\bibitem{Bowditch} Bowditch B. {\it Geometrical finiteness for hyperbolic groups.} J. Funct. Anal. {\bf 113} (1993), no. 2, 245--317. 
\bibitem{BO} Bowen R. {\it Hausdorff dimension of quasi circles} Inst. Hautes Etudes Sci. Publ. Math. No. 50 (1979), 11--25.
\bibitem{BS1} Birman M.S. and Solomyak M.Z. {\it Double operator Stieltjes integrals}, Spectral theory and wave processes, Problemy Mat. Fiz., vol. 1, Leningrad State University, Leningrad 1966, pp. 33--67.
\bibitem{BS2} Birman M.S. and Solomyak M.Z. {\it Double operator Stieltjes integrals. II}, Spectral theory, difraction problems, Problemy Mat. Fiz., vol. 2, Leningrad State University, Leningrad 1967, pp. 26--60.
\bibitem{BS3} Birman M.S. and Solomyak M.Z. {\it Double operator Stieltjes integrals. III. Taking the limit under the integral sign}, Spectral theory and wave processes, Problemy Mat. Fiz., vol. 6, Leningrad State University, Leningrad 1973, pp. 27--53.
\bibitem{BS4} Birman M.S. and Solomyak M.Z. {\it Spectral Theory of self-adjoint operators in Hilbert space}, D. Reidel Publishing Co, Dordrecht, 1987.
\bibitem{BS} Birman M.S. and Solomyak M.Z. {\it  Double operator integrals in a Hilbert space,} Integral Equations Operator Theory {\bf 47} (2003), no. 2, 131-168.
\bibitem{BJ} Bishop C., Jones P. {\it Hausdorff dimension and Kleinian groups.} Acta Math. {\bf 179} (1997), no. 1, 1--39.
\bibitem{CPSZ} Caspers M., Potapov D., Sukochev F., Zanin D. {\it Weak type estimates for the absolute value mapping.} J. Operator Theory {\bf 73} (2015), no. 2, 361--384.
\bibitem{Connes} Connes A. {\it Noncommutative geometry.} Academic Press, Inc., San Diego, CA, 1994.
\bibitem{Dixmier} Dixmier J. {\it Existence de traces non normales,} (French)  C. R. Acad. Sci. Paris Ser. A-B  {\bf 262}  (1966) A1107--A1108.
\bibitem{DDPS} Dodds P., Dodds T., de Pagter B., Sukochev F. {\it Lipschitz continuity of the absolute value and Riesz projections in symmetric operator spaces.} J. Funct. Anal. {\bf 148} (1997), no. 1, 28--69.
\bibitem{Peter} Dodds P., de Pagter B. {\it The non-commutative Yosida-Hewitt decomposition revisited.} Trans. Amer. Math. Soc. {\bf 364} (2012), no. 12, 6425--6457.
\bibitem{DFWW} Dykema K., Figiel T., Weiss G., Wodzicki M. {\it Commutator structure of operator ideals.}  Adv. Math.  {\bf 185} (2004), no. 1, 1--79.
\bibitem{elstrodt} Elstrodt J., Grunewald F., Mennicke J. {\it Groups acting on hyperbolic space. Harmonic analysis and number theory.} Springer Monographs in Mathematics. Springer-Verlag, Berlin, 1998.
\bibitem{GW} Gehring F., Vaisala J. {\it Hausdorff dimension and quasiconformal mappings.} J. London Math. Soc. (2) {\bf 6} (1973), 504--512.
\bibitem{HKZ} Hedenmalm H., Korenblum B., Zhu K. {\it Theory of Bergman spaces.} Graduate Texts in Mathematics, {\bf 199}, Springer-Verlag, New York, 2000.
\bibitem{hf} Hille E., Phillips R. {\it Functional analysis and semi-groups.} Third printing of the revised edition of 1957. American Mathematical Society Colloquium Publications, Vol. XXXI. American Mathematical Society, Providence, R. I., 1974.
\bibitem{Katok} Katok S. {\it Fuchsian groups.} Chicago Lectures in Mathematics. University of Chicago Press, Chicago, IL, 1992.
\bibitem{LT2} Lindenstrauss J., Tzafriri L. {\it Classical Banach spaces. II. Function spaces,} Ergebnisse der Mathematik und ihrer Grenzgebiete {\bf 97}, Springer-Verlag, Berlin-New York, 1979.
\bibitem{LSZ} Lord S., Sukochev F., Zanin D. {\it Singular Traces: Theory and Applications,} de Gruyter Studies in Mathematics, {\bf 46}, de Gruyter, 2013.
\bibitem{markushevich} Markushevich A. {\it Theory of functions of a complex variable. Vol. III.} Chelsea Publishing Co, 1977.
\bibitem{maskit} Maskit B. {\it Kleinian groups.} Grundlehren der Mathematischen Wissenschaften, {\bf 287}.
\bibitem{DSW} de Pagter B., Sukochev F., Witvliet H. {\it Double operator integrals.} J. Funct.Anal. {\bf 192} (2002), no. 1, 52--111.
\bibitem{Peller} Peller V.V.,  {\it Hankel operators in the theory of perturbations of unitary and self-adjoint operators}. Funktsional. Anal. i Prilozhen. 19:2 (1985), 37-51 (Russian), English version: 
Functional Analysis and Its Applications, 1985, 19:2, 111--123.
\bibitem{PS-crelle} Potapov D., Sukochev F. {\it Unbounded Fredholm modules and double operator integrals.} J. Reine Angew. Math. {\bf 626} (2009), 159--185.
\bibitem{Rose} Rose J. {\it A Course on Group Theory.} Cambridge University Press, 1978.
\bibitem{rudin} Rudin W. {\it Functional analysis.} Second edition. International Series in Pure and Applied Mathematics. McGraw-Hill, Inc., New York, 1991.
\bibitem{SSUZ-pietsch} Semenov E., Sukochev F., Usachev A., Zanin D. {\it Banach limits and traces on $\mathcal{L}_{1,\infty}$,}  Adv. Math. {\bf 285} (2015), 568--628.
\bibitem{sullivan} Sullivan D. {\it The density at infinity of a discrete group of hyperbolic motions.} Inst. Hautes Etudes Sci. Publ. Math. No. 50 (1979), 171--202.
\bibitem{sullivan84} Sullivan D. {\it Entropy, Hausdorff measures old and new, and limit sets of geometrically finite Kleinian groups.} Acta Math. {\bf 153} (1984), no. 3-4, 259--277.
\end{thebibliography}
\end{document}